\documentclass[11pt]{article}
\pdfoutput=1
\usepackage[margin=1in]{geometry}
\usepackage{setspace}
\usepackage{graphicx}
\usepackage{amsmath,amssymb,amsthm,bbm,mathrsfs,enumerate}
\usepackage[colorlinks=true,linkcolor=red,anchorcolor=blue,citecolor=blue,CJKbookmarks=True]{hyperref}
\usepackage{algorithm,algorithmic}
\usepackage{appendix}
\usepackage{booktabs}
\usepackage{threeparttable}  
\usepackage{multicol}
\usepackage{multirow}
\usepackage{caption}
\usepackage{subcaption}
\usepackage{mathtools}
\usepackage[normalem]{ulem}
\usepackage{cancel}
\usepackage{braket}
\usepackage[authoryear,round]{natbib}

\newtheorem{lemma}{Lemma}
\newtheorem{theorem}{Theorem}
\newtheorem{proposition}{Proposition}
\newtheorem{corollary}{Corollary}
\newtheorem{assumption}{Assumption}

\newtheorem{remark}{Remark}

\begin{document}

\allowdisplaybreaks
	
\vspace{-2.0in}
\title{\bf Data-Driven Brownian Reflection Control}

\author{
Guodong Pang\footnote{Department of Computational Applied Mathematics and Operations Research, George R. Brown
School of Engineering, Rice University, Houston, TX 77005; gdpang@rice.edu} \qquad
Dacheng Yao\footnote{State Key Laboratory of Mathematical Sciences, Academy of Mathematics and Systems Science, Chinese Academy of Sciences, Beijing, 100190, China; School of Mathematical Sciences, University of Chinese Academy of Sciences, Beijing, 100049, China; dachengyao@amss.ac.cn} \qquad  
Hao Yin\footnote{Academy of Mathematics and Systems Science, Chinese Academy of Sciences, Beijing, 100190, China; School of Mathematical Sciences, University of Chinese Academy of Sciences, Beijing, 100049, China; yinhao@amss.ac.cn}}

\date{\today}

\maketitle \thispagestyle{empty}

\begin{abstract}
We study a data-driven reflection control problem for a Brownian model with unknown drift and volatility. We first propose a learn-then-optimize (LTO) algorithm: it estimates the policy-relevant parameter during exploration, plugs the estimate into the optimality equation, and exploits the resulting policy---achieving an $O(\sqrt{T})$ finite-time expected regret bound. We further propose two algorithms, adaptive-updating (AU) and full-history adaptive-updating (AU-FH), which continuously update the estimator and reflecting level, attaining an improved $O(\log T)$ regret bound. Notably, AU-FH algorithms leverages all historical data, yielding better performance in numerical simulations. Our analysis decomposes regret into exploration, transient, and learning components. Transient regret from nonstationarity is bounded by the time-integrated deviation of the transition semigroup from stationarity evaluated on the holding cost, which can be further bounded via a Foster-Lyapunov inequality for exponential convergence of the controlled reflected Brownian motion (RBM). For learning regret, we establish local regularity properties together with consistency and mean-squared error bounds for the estimator, which control the stationary cost gap between the learned and optimal reflection policies. In addition, we leverage the monotonicity of the moving boundary Skorokhod map to derive moment bounds for the AU algorithms' switching states, via pathwise comparison with fixed-boundary RBMs.

\vspace{.25in}
\noindent
\textbf{Keywords:} data-driven stochastic control, model-based reinforcement learning, reflection control, reflected Brownian motion, regret analysis, moving boundary Skorokhod mapping.
\end{abstract}

\section{Introduction}\label{sec:1}
Brownian and diffusion models have been widely used as control formulations and tractable approximations for stochastic systems in operations research, including queueing networks, production-inventory systems, and revenue management operations (see, e.g., \citet{chen2001fundamentals}, \citet{harrison2013brownian}, and \citet{constantinides1978existence}). 
These models are often developed under complete knowledge of system primitives. In numerous practical applications, however, certain system primitives are unknown a priori; consequently, the controller must concurrently learn from operational data and regulate system behavior in real time. This, in turn, presents decision makers with the dual challenge of jointly estimating unknown parameters and implementing optimal control---balancing exploration (to refine estimation accuracy) and exploitation (to minimize operational costs).

Motivated by the gap between fully specified analytical models and data-driven implementation, recent research has increasingly focused on integrating model-based control with learning methods for stochastic systems. In this paper, we consider a data-driven variant of the Brownian production-inventory model studied by~\citet{yang2020optimality}---whereas their model operates under a full-information setting, our model relaxes this by assuming the drift and volatility to be unknown. We leverage the full-information optimal reflection control policy to design data-driven algorithms: these algorithms are not only interpretable and implementable but also provide strong performance guarantees. To evaluate these policies, we use the finite-time regret---defined as the expected difference between the cumulative cost of the learning-based policy over a finite horizon $T$ and that of the corresponding full-information optimal policy, with both policies evaluated under identical underlying stochastic realizations.
 
Since the optimality equation depends on the drift $\theta$ and volatility $\sigma$ through the composite parameter $\gamma:=-2\theta/\sigma^2$, we exploit this dimensionality reduction to design algorithms for policy-relevant parameter $\gamma$ rather than estimating $\theta$ and $\sigma$ separately. We first propose the learn-then-optimize (LTO) algorithm (Algorithm~\ref{alg:1}): it begins with an exploration policy, estimates $\gamma$ from trajectory data, computes the optimal reflecting level for this estimate, and then exploits the resulting policy. The inherent tradeoff between exploring diverse policies to learn unknown model primitives and exploiting the most promising policy based on historical observations mirrors the exploration-exploitation tradeoff in the classical multi-armed bandit (MAB) problem. However, a key observation is that every reflection control policy generates informative data in our model---thus, we propose adaptive-updating (AU) algorithm (Algorithm~\ref{alg:2}) and full-history adaptive-updating (AU-FH) algorithm (Algorithm~\ref{alg:3}) that integrate exploration and exploitation, continuously updating the estimator and the reflecting level while achieving stronger performance guarantees.

Our main contributions can be summarized as follows.
We use a model-based framework to design data-driven algorithms with finite-time regret guarantees for Brownian reflection control under unknown drift and volatility. For the LTO algorithm, we show that the regret bound is $O(\sqrt{T})$ (Theorem~\ref{thm:LTO4generalh}) when the exploration period is of the square-root order of the finite-time horizon. The AU  and AU-FH algorithms, by contrast, achieve an $O(\log T)$ regret (Theorem~\ref{thm:AU4generalh}). The AU-FH algorithm yields a slight performance improvement by incorporating the full historical data into estimation. 
To get these results, we decompose the regret into three parts: exploration regret, transient regret and learning regret (see Section~\ref{sec:3}). The analysis of transient and learning regret is pivotal to our results, as elaborated below.

Transient regret denotes the cumulative deviation of the expected running cost of the controlled process from the stationary cost associated with the policy implemented over each interval; it arises from the nonstationarity of the controlled process. 
We exploit the fact that the transient regret can be bounded by the exponential convergence bounds for the  transition law from the stationary distribution for an 
appropriately chosen Lyapunov function (see \eqref{eq:transient-rewrite}-\eqref{eq:exponential-ergodicity}). For this purpose, since the process evolves as a reflected Brownian motion (RBM) under a fixed reflection policy,
we adopt a modified Lyapunov function approach proposed by~\citet{Sarantsev2016Explicit,sarantsev2017reflected} to derive an ``explicit" exponential convergence rate to the stationary distribution for the RBM (Proposition~\ref{prop:exp-convergence-rate-rbm}).
This explicit result demonstrates how the transient regret bound depends on the parameters. Notably, the exponential convergence rate is independent of the reflection level, so this result can be uniformly applied to each fixed-policy interval to derive an $O(1)$ transient regret bound.
For the LTO algorithm (which involves only one policy update), the transient regret is bounded by a constant (Proposition~\ref{prop:transient-regret-bound-LTO}). For the AU and AU-FH algorithms, since the number of updates up to time $T$ is of order $\log T$, summing the constant transient bound over all update intervals yields an $O(\log T)$ transient regret (Proposition~\ref{prop:transient-regret-bound-AU}).

The learning regret term corresponds to the stationary cost gap between the learned reflection policy and the full-information optimal policy. Leveraging the ergodicity property (Lemma~\ref{lemma:ergodicity}), we construct an estimator via the time average of the controlled process and establish the consistency of this estimator (Lemma~\ref{lemma:consistency}). 
To bound the learning regret, we first establish the local regularity of the optimality equation (see Section~\ref{sec:5.1}), then derive a mean squared error (MSE) bound (see Section~\ref{sec:5.2}) for the time-average process
(which can be viewed as the reciprocal of the estimator). We then introduce a good event to characterize whether the estimator falls within the neighborhood of the true parameter, where the local regularity property holds. On the good event, we bound the learning regret using the quadratic cost bound (following from local regularity) and the estimator’s MSE bound. For the complement event, we control the learning regret via the uniform cost bounds and the probability of this event.
For the LTO algorithm, the learning regret is $O(1/\tau)$ (Proposition~\ref{prop:learning-regret-bound-LTO}), where $\tau$ denotes the exploration period. Optimizing the regret over $\tau$ reveals the exploration-exploitation tradeoff and yields an $O(\sqrt{T})$ regret bound.
For the AU and AU-FH algorithms integrating exploration and exploitation,the doubling interval strategy ensures the learning regret is bounded by a constant over each update interval, leading to an $O(\log T)$ learning regret bound (Proposition~\ref{prop:learning-regret-bound-AU} and Proposition~\ref{prop:learning-regret-bound-AUFH}).

It is worth noting that, in contrast to the LTO algorithm, the above transient and learning regret analysis for the AU and AU-FH algorithms face an additional technical challenge: both the state and the reflecting level at each update time are random and history-dependent. Thus, when applying the strong Markov property, we require an exponential bound for the random switching states in the transient regret analysis and a polynomial bound for those in the learning regret analysis, respectively.
To address this challenge, we leverage the monotonicity of the moving boundary Skorokhod map to dominate the resulting sample paths---a critical step that enables us to derive the desired bounds (see Section~\ref{sec:4.3}).

Numerical experiments show that the algorithms’ performance aligns with the theoretical analysis. We use REINFORCE (a classic model-free policy gradient algorithm in RL) as a benchmark to demonstrate the effectiveness of designing algorithms that fully exploit the problem structure. We also investigate how the diffusion scale influences the finite-time learning performance of the AU algorithm.

\subsection{Literature Review}\label{sec:1.1}
This paper connects to several interrelated strands of literature. Our work builds on the optimality of reflection control established by~\citet{yang2020optimality} in the full-information setting. More broadly, Brownian control models have been widely used as performance models and control approximations for queueing systems, stochastic networks, inventory and production systems; see~\citet{harrison2013brownian} for a systematic treatment. Classical contributions include singular control of Brownian models, such as~\citet{harrison1983instantaneous},~\citet{karatzas1983class},~\citet{menaldi2005some},~\citet{taksar1985average},~\citet{jack2006singular}, and~\citet{dai2013brownian}; impulse control of Brownian or diffusion models, such as~\citet{harrison1983impulse},~\citet{ormeci2008impulse},~\citet{dai2013brownian}, and~\citet{helmes2017continuous}; and drift-rate control of Brownian processing systems, such as~\citet{ata2005drift}. These studies establish the optimal control policy under known system parameters. In contrast, we develop a data-driven, model-based learning framework for Brownian reflection control.

Our work also relates to the growing literature on data-driven and learning-based continuous-time stochastic control. 
A closely related line of work focuses on data-driven control for diffusion processes.
\citet{christensen2023data} investigate data-driven optimal stopping for a general diffusion process, analyze estimator of the optimal stopping barrier, and derive non-asymptotic upper bounds on the simple regret.
\citet{christensen2023nonparametric} and~\citet{christensen2024learning} develop data-driven methods for impulse and singular control problems in ergodic diffusion settings. They employ nonparametric estimation for objective functions based on invariant distribution, and their performance guarantees are formulated in terms of sublinear long-run average regret per unit time.
For multidimensional reflection problems, \citet{christensen2024data} study reversible Langevin diffusions under normal reflection, where the control variable is the reflection domain and the full-information problem can be reformulated as a shape optimization problem. In the data-driven setting, they construct an optimal domain estimator via nonparametric invariant density estimation and obtain sublinear regret per unit time through an episodic learning scheme.
Our work differs from these papers in several key aspects. 
First, the source of information used for learning is fundamentally distinct. These studies typically assume that the underlying diffusion is ergodic, so their estimators based on invariant density can be estimated from uncontrolled trajectory data.  
In contrast, the uncontrolled Brownian motion with negative drift in our model has no stationary distribution, we generate informative data for estimating the policy-relevant parameter by imposing reflection control.
Second, the statistical structure of our learning problem is parametric (focusing on the policy-relevant parameter $\gamma$) rather than nonparametric.
Third, the regret criteria differ: we analyze finite-time regret, whereas they focus on long-run average regret per unit time.
Most closely related to our Brownian framework, \citet{ankirchner2024learning} study a drift rate learning problem for Brownian control with known variance coefficient, using the optimality of bang-bang policy under the full-information setting to design learning algorithms and derive expected finite-time regret bound for a quadratic cost function. In contrast, we focus on data-driven reflection control, accommodate an unknown volatility parameter, and explicitly analyze the regularity properties of the optimality equation.
Compared to the papers discussed above, our model setup also covers the case where the volatility term are unknown. The volatility parameter plays a critical role in our model:  
variations in $\sigma$ may rapidly alter the optimal policy and optimal cost (see Appendix~\ref{app:C.3}).
Several studies have developed model-free reinforcement learning (RL) methods for controlled diffusions; see, e.g., \citet{jia2022policy,jia2023q}. Related model-free RL approaches have also been extended to financial applications, including continuous-time optimal execution~\citep{wang2026reinforcement} and mean-variance portfolio selection~\citep{huang2024mean,gao2024reinforcement}.

Combining learning and control in queueing systems has received increasing attention; see~\citet{walton2021learning} for a comprehensive review. From a queueing perspective, the reflected Brownian model considered in this paper arises naturally as the heavy-traffic limit of a $G/G/1$ queue, and hence our learning problem can be viewed as a diffusion-scale analogue of learning in queueing systems. There is a stream of work applying RL to Markov decision process (MDP) structured problems in queueing control (see~\citet{liu2022rl,liu2019reinforcement}, \citet{dai2022queueing}, \citet{cohen2024learning}, and~\citet{mastropietro2025fast}). Recent works by \citet{ata2024singular,ata2025drift} and~\citet{zhong2025multilevel} consider the associated diffusion control problems in heavy traffic; their approaches, however, rely on computational methods or deep learning for high-dimensional problems, which differ from ours. Another line of work formulates queueing control as MAB problems; see, e.g., \citet{krishnasamy2021learning}, \citet{freund2024efficient}, \citet{zhalechian2023data}, \citet{jia2024online}, and \citet{xing2025online}. There also exist some works using online learning or statistical learning for queueing systems; see, e.g., \citet{hsu2022integrated}, \citet{zhong2024learning}, \citet{chen2024online}, \citet{zheng2025joint}, and~\citet{chen2026online}.

\subsection{Organization}\label{sec:1.2}
The rest of the paper is organized as follows. Section~\ref{sec:2} introduces the model and main results: we formulate our learning Brownian control problem and present some preliminaries and our main results. Section~\ref{sec:3} gives the sample-path construction regret decomposition for three proposed algorithms. Then, we show how to analyze the transient regret in Section~\ref{sec:4} and the learning regret in Section~\ref{sec:5}. In Section~\ref{sec:6}, we complete the proofs of our main results. In Section~\ref{sec:7}, we evaluate the performance of our proposed algorithms. Finally, Section~\ref{sec:8} concludes the paper with several possible future directions. Supporting proofs can be found in the appendix.

\section{Model and Main Results}\label{sec:2}
\subsection{Model}\label{sec:2.1}
We consider a data-driven learning-based control problem for a Brownian model with unknown drift and variance parameters.
Given an initial state $x\in\mathbb{R}$, the controlled state process $Z=\{Z(t):t\geq0\}$ evolves according to
\[
Z(t)=x+\theta t+\sigma B(t)+Y(t), \quad t\geq 0,
\]
where $\theta<0$ and $\sigma>0$, and $B = \{B(t) : t \geq 0\}$ is a standard Brownian motion defined on some filtered probability space $(\Omega,\{\mathcal{F}_t\},\mathcal{F},\mathbb{P})$.
The control process $Y=\{Y(t):t\geq0\}$ is admissible if it is $\{\mathcal{F}_t\}$-nonanticipative, nondecreasing, and satisfies $Y(0)\geq 0$.
We denote the set of all admissible controls by $\Pi$.
The objective is to design a data-driven admissible control policy $Y\in\Pi$, based on observations of the system state under unknown $\theta$ and $\sigma$, to minimize the long-run average cost
\[
AC(x,Y)=\limsup_{t\to\infty}\mathbb{E}_{x}\left[\frac{1}{t}\int_0^t h\left(Z(u)\right)\mathrm{d} u\right],
\]
where $\mathbb{E}_x$ is the expectation operator conditioning on the initial state $Z(0)=x$. In the full-information setting, this model has been studied by~\citet{yang2020optimality}.

The model is motivated by a production-inventory system in which demand arrives at rate $\lambda$ and production occurs at rate $\mu$, with $\lambda<\mu$. In this case, the net drift $\theta$ can be viewed as $\theta=\lambda-\mu<0$. The nondecreasing control process $Y$ represents cumulative upward adjustments that offset this negative drift, such as disposing of excess inventory or reducing the production rate. 
The state $Z(t)$ represents net inventory: positive values correspond to backlogged demand, whereas negative values correspond to on-hand inventory.
The holding cost function $h$ penalizes deviations of the state on both sides of the desired operating region. 

Let $\gamma:=-2\theta/\sigma^2$.
For unknown parameters $\theta$ and $\sigma$, we assume that the manager has a prior lower bound $\underline{\gamma}>0$ for the true parameter $\gamma$.
The holding cost function $h$ is assumed to satisfy the following conditions.
\begin{assumption}\label{ass:1}
    \begin{enumerate}[$(i)$]
        \item $h:\mathbb{R}\to[0,\infty)$ is continuous and strictly increasing on $[0,\infty)$, and is continuously differentiable with $h^{\prime}(x)<0$ on $(-\infty,0)$. 
        \item $\lim_{x\to-\infty}h(x)=+\infty$.
        \item There exists a constant $0<b<\underline{\gamma}/2$ such that $\limsup_{x\to+\infty}h(x)/e^{bx}<+\infty$.
    \end{enumerate}
\end{assumption}

To evaluate a data-driven control policy, we use the expected regret over a finite horizon $T$, defined by
\begin{align}\label{eq:regret-1}
    R(x,T):=\mathbb{E}_x\left[\int_0^T \left(h(Z(t))-h(Z^*(t))\right)\mathrm{d} t\right],
\end{align}
where $\left\{Z(t):t\geq0\right\}$ is the state process under the data-driven control policy $\left\{Y(t):t\geq 0\right\}$ and $\left\{Z^*(t):t\geq0\right\}$ is the optimally controlled process under known $\theta$ and $\sigma$.
Here, $\mathbb{E}_x$ denotes the expectation conditional on the same initial state $x$ for both processes.

\subsection{Optimal Reflection Control under Full Information}\label{sec:2.2}
To contextualize our results, we first outline the full-information findings below. We start with a pair of continuous $\mathcal{F}_t$-adapted processes $\left(Z_r^{s,y}(t),Y_r^{s,y}(t)\right),t\geq0$ that satisfy the following SDE
\begin{align}\label{eq:SDE-startfrom-s}
     \mathrm{d}Z_r^{s,y}(t)=\theta\mathrm{d}t+\sigma\mathrm{d}B(t)+ \mathrm{d}Y_r^{s,y}(t),\quad t\geq s,
\end{align}
with reflection at $r$, the initial condition $Z_r^{s,y}(s)=y$, and subject to
\begin{enumerate}
    \item[(a)] $Z_r^{s,y}(t)\geq r,\quad t\geq s$;
    \item[(b)] $Y_r^{s,y}$ is nondecreasing and $Y_r^{s,y}(s)=0$;
    \item[(c)] $\int_s^t \mathbf{1}_{\left\{Z_r^{s,y}(u)>r\right\}}\mathrm{d}Y_r^{s,y}(u)=0,\quad t\geq s$;
    \item[(d)] $Z_r^{s,y}(t)=y+\theta (t-s)+\sigma \left( B(t)-B(s)\right)+Y_r^{s,y}(t),t\geq s,\quad \text{a.s.}$.
\end{enumerate}
This reflected SDE \eqref{eq:SDE-startfrom-s} has a unique strong solution (see Theorem~$1.2.1$ of~\citet{pilipenko2014introduction}).

Consider a subclass of admissible control $\Pi^*\subseteq \Pi$ called reflection control (defined as follows).
For a fixed reflection level $r\in\mathbb{R}$, let $\left(Z_r^{0,x}(t),Y_r^{0,x}(t)\right),t\geq0$ be the solution to \eqref{eq:SDE-startfrom-s}---when the process starting from $s=0$ and $y=x$, we omit the superscript and simply denote the pair by $\left(Z_r(t),Y_r(t)\right)$.
Then $Y_r=\left\{Y_r(t):t\geq0\right\}\in\Pi^*$ is the minimal nondecreasing process that keeps the corresponding state process $Z_r=\left\{Z_r(t):t\geq0\right\}$ above $r$ almost surely. 
It is known that $Y_r$ and $Z_r$ can be explicitly expressed as follows, as derived from the Skorokhod problem:
\begin{align}
    &Y_r(t)=\sup_{0\leq u\leq t}\left(r-x-\theta u-\sigma B(u)\right)^+,\label{eq:Skorokhod-mapping-Y}\\
    &Z_r(t)=x+\theta t+\sigma B(t)+\sup_{0\leq u\leq t}\left(r-x-\theta u-\sigma B(u)\right)^+.\label{eq:Skorokhod-mapping-Z}
\end{align}
Furthermore, $Z_r(t)-r$ is a standard reflected Brownian motion (RBM) and $Z_r(+\infty)-r$ follows an exponential distribution with rate $\gamma=-2\theta/\sigma^2$ (refer to Section~$6.2$ of~\citet{chen2001fundamentals}). Here, $Z_r(+\infty)$ denotes a random variable having the stationary distribution of $Z_r$. We denote this stationary distribution by $\pi_r$, i.e.,
\begin{align}\label{eq:stationary-Zr}
\pi_r(\mathrm{d}z)=\gamma e^{-\gamma(z-r)}\mathbf{1}_{ [r,\infty)}(z)\mathrm{d}z.
\end{align}
Thus, under the reflection control $Y_r$, we have
\begin{align*}
    AC(x,Y_r)&=\int_{-\infty}^{\infty}h(z) \pi_r(\mathrm{d}z)=\gamma\int_r^{\infty}h(z)e^{-\gamma (z-r)}\mathrm{d} z.
\end{align*}

Notably, for the reflection control class, the long-run average cost only depends on parameters $\theta$ and $\sigma$ via $\gamma$. Define
\begin{equation}\label{eq:def-C}
C(u,r):=u\int_r^{\infty}h(z)e^{-u (z-r)}\mathrm{d} z.
\end{equation}
We have $AC(x,Y_r)=C(\gamma,r)$.
By \eqref{eq:def-C} and a change of variables, we also have
\[
C(u,r)=u \int_0^{\infty}h(r+z)e^{-u z}\mathrm{d}z.
\]
Then for $r>0$,
\begin{align*}
C(\gamma,r)&=\gamma\int_0^{\infty}h(r+z)e^{-\gamma z}\mathrm{d}z> \gamma\int_0^{\infty}h(z)e^{-\gamma z}\mathrm{d}z=C(\gamma,0),
\end{align*}
where 
the inequality follows from the fact that $h(x)$ is strictly increasing in $[0,\infty)$ in Assumption~\ref{ass:1} $(i)$.
Thus, it is not optimal to choose $r>0$. Taking partial derivative of $C(\gamma,r)$ with respect to $r\leq0$ in \eqref{eq:def-C}, we have $\partial_r C(\gamma,r)=\gamma\left(C(\gamma,r)-h(r)\right)$.
We will show that the optimal reflection level $r^*$ is the solution to the optimality equation
\begin{equation}\label{eq:optimal}
    C(\gamma,r)=h(r).
\end{equation}
Assumption~\ref{ass:1} ensures this equation has a unique, finite, negative solution $r^*$---specifically, it implies $C(\gamma,r)$ is unimodal in $r$ for $u=\gamma$ (see Lemma~\ref{lemma:exist-unique-finite-solution}).

Moreover, under the Assumption~\ref{ass:1} $(iii)$, for fixed parameter $\gamma\geq\underline{\gamma}$, we have $0<b< \gamma/2$. Theorem $1$ in~\citet{yang2020optimality} shows that the optimal reflection control $Y_{r^*}(t)$ is optimal over all admissible controls---that is, $AC(x,Y_{r^*})\leq AC(x,Y)$ for any admissible control $Y\in\Pi$ and from any initial state $x$.

\subsection{Data-Driven Adaptive Reflection Control}\label{sec:2.3}
Since the optimal policy is reflection control with reflecting level $r^*$ under Assumption~\ref{ass:1},  the regret in \eqref{eq:regret-1} becomes
\begin{equation}\label{eq:regret-2}
R(x,T)=\mathbb{E}_x\left[\int_0^T \left(h(Z(t))-h(Z_{r^*}(t))\right)\mathrm{d} t\right].
\end{equation}

In our learning framework, we aim to learn the optimal reflecting level $r^*$ characterized in the full-information setting. Since the optimality equation \eqref{eq:optimal} only depends on the composite parameter $\gamma$, the optimal reflecting level $r^*$ is determined by $\gamma$. 
Hence, we design learning algorithms for the parameter $\gamma$, rather than to identify $\theta$ and $\sigma$. This motivates a plug-in learning approach: estimate the effective parameter $\gamma$ from the controlled trajectory, and then compute the corresponding reflecting level via the full-information optimality equation based on the estimated parameter.
 
For $u\geq\underline{\gamma}$, define
\begin{align}\label{eq:F(u,r)}
F(u,r):=C(u,r)-h(r).
\end{align}
It is clear that $F(u,r)=0$ is precisely the optimality equation \eqref{eq:optimal} associated with the parameter $u$. Let $r(u)$ be the solution to this equation; then $r^*=r(\gamma)$.

Since $Z_r$ is ergodic with stationary distribution $\pi_r$, we have the following lemma, which enables us to construct an estimator for $\gamma$.
\begin{lemma}\label{lemma:ergodicity}
    For any $r\leq 0$,
    \[
    \lim_{T\to\infty}\frac{1}{T}\int_0^T \left(Z_r(t)-r\right)\mathrm{d}t=\int_0^{\infty}z\pi_r(\mathrm{d}z)=\frac{1}{\gamma}\quad\text{a.s.}
    \]
\end{lemma}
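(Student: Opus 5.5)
The plan is to reduce to a statement about a standard RBM started from an arbitrary point and then apply a strong law via regenerative cycles. Note that the restriction $r\le 0$ plays no role here; the argument works for any $r$. Set $W(t):=Z_r(t)-r$. By \eqref{eq:Skorokhod-mapping-Z}, $W$ is the one-dimensional Skorokhod reflection at $0$ of $X(t):=(x-r)+\theta t+\sigma B(t)$. So $W$ is an RBM with drift $\theta<0$, variance $\sigma^2$, and initial value $W(0)=(x-r)^+$; as recalled before \eqref{eq:stationary-Zr}, its stationary law is exponential with rate $\gamma$. The right-hand side of the lemma is the mean of this law: $\int_0^\infty z\,\gamma e^{-\gamma z}\,\mathrm{d}z=1/\gamma$.

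First I dispose of the initial transient. If $x>r$, define the hitting time $\tau_0:=\inf\{t\ge0: x+\theta t+\sigma B(t)=r\}$. Since $\theta<0$, $\tau_0<\infty$ a.s. Before $\tau_0$ the process $W$ is continuous and finite, so $\int_0^{\tau_0}W(t)\,\mathrm{d}t<\infty$ a.s. and this piece vanishes after division by $T$. By the strong Markov property, $W(\tau_0+\cdot)$ is an RBM started at $0$, driven by the Brownian motion $B(\tau_0+\cdot)-B(\tau_0)$. It therefore suffices to prove the lemma for $W(0)=0$, where $\tau_0$ is replaced by $T-\tau_0$ and $(T-\tau_0)/T\to1$.

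Next I build a regenerative structure for $W$ started at $0$. Let $\rho_0=0$. Define $\rho_{k+1}$ to be the first time after $\rho_k$ that $W$ reaches level $1$ and then subsequently returns to $0$. Because $\theta<0$ and the process is reflected, each $\rho_{k+1}-\rho_k$ is a.s. finite. By the strong Markov property, the cycles $\{W(\rho_k+s):0\le s<\rho_{k+1}-\rho_k\}$ are i.i.d. Set $L_k:=\rho_{k+1}-\rho_k$ and $A_k:=\int_{\rho_k}^{\rho_{k+1}}W(t)\,\mathrm{d}t$; the pairs $(L_k,A_k)$ are i.i.d. The main obstacle is to show $\mathbb{E}L_0<\infty$ and $\mathbb{E}A_0<\infty$. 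For $L_0$, the time to climb to $1$ under reflection has finite mean, for example by optional stopping applied to $W(t)-\theta t-Y(t)$ together with the scale function. The return time from $1$ to $0$ has mean $1/|\theta|$. For $A_0$, I would bound $A_0\le L_0\cdot\sup_{\text{cycle}}W$. Both factors have exponential tails, since hitting times of a Brownian motion with negative drift have exponential moments and so does the running maximum over a cycle. Cauchy--Schwarz then gives $\mathbb{E}A_0<\infty$.

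Then I apply the renewal-reward strong law. Let $N(T)=\max\{k:\rho_k\le T\}$. Since $W\ge0$, the integral $\int_0^T W(t)\,\mathrm{d}t$ is sandwiched between $\sum_{k<N(T)}A_k$ and $\sum_{k\le N(T)}A_k$. The SLLN gives $N(T)/T\to 1/\mathbb{E}L_0$, and hence
\[
\frac1T\int_0^T W(t)\,\mathrm{d}t\to \frac{\mathbb{E}A_0}{\mathbb{E}L_0}\quad\text{a.s.}
\]
Finally, I identify the constant. The regenerative construction shows that $\mathbb{E}A_0/\mathbb{E}L_0=\int z\,\mu(\mathrm{d}z)$, where $\mu$ is the unique stationary distribution of $W$, and $\mu$ is exponential with rate $\gamma$ by the cited result of \citet{chen2001fundamentals}. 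This gives the limit $1/\gamma$. As an alternative to the regeneration argument, one could invoke Birkhoff's ergodic theorem for the stationary version of $W$ and couple it with $W$ through monotonicity of the Skorokhod map. Both versions reach $0$, and they coincide after the later of their two hitting times of $0$.
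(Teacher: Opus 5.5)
Your proof is correct, but it takes a different and more explicit route than the paper. The paper gives no separate argument for this lemma. It only remarks that $Z_r$ is ergodic with stationary law $\pi_r$, takes the time-average limit from the ergodic theorem for this positive recurrent Markov process, and notes that $1/\gamma$ is the mean of $\mathrm{Exp}(\gamma)$.

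You prove the law of large numbers by hand instead. You discard the finite piece before the first hitting of $r$, and you build i.i.d.\ cycles of the form ``up to level $1$, back to $0$'', which correctly avoids the fact that an RBM revisits $0$ instantaneously. You then check $\mathbb{E}L_0,\mathbb{E}A_0<\infty$ and apply the renewal-reward SLLN.

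What your route buys is a pathwise statement for every deterministic initial state $x$. Birkhoff's theorem alone covers only the stationary version, hence only $\pi_r$-almost every starting point. The paper's one-line appeal therefore implicitly relies on Harris recurrence, or on exactly the coupling you sketch at the end. The citation's advantage is brevity.

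Two small refinements:
\begin{itemize}
\item The climb time from $0$ to $1$ is not a hitting time of a free drifted Brownian motion, so your stated reason for its exponential tail does not apply to it. You do not need that tail anyway. On the climb $W\le 1$, so that part of $A_0$ needs only the finite mean you already have. Only the descent needs the free-drift tail bounds.
\item Your identification $\mathbb{E}A_0/\mathbb{E}L_0=\int z\,\mu(\mathrm{d}z)$ still rests on regenerative theory plus uniqueness of the invariant law. You can make this step self-contained with the Poisson equation the paper uses in the proof of Lemma~\ref{lemma:MSE-bound}. Take $f(z)=z^2/(2\theta)$. Itô's formula, $f'(0)=0$, and the fact that $W$ vanishes at both ends of a cycle give $A_0-L_0/\gamma=-\frac{\sigma}{\theta}\int_0^{L_0}W(t)\,\mathrm{d}B(t)$. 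Your moment bounds make this stochastic integral mean zero, so $\mathbb{E}A_0=\mathbb{E}L_0/\gamma$ directly.
\end{itemize}
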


A natural idea leads to the learn-then-optimize (LTO) algorithm (see Algorithm~\ref{alg:1}). In the first step, we operate the system under a fixed reflection policy (with reflecting level $0$) for a learning period. Then by Lemma~\ref{lemma:ergodicity}, we construct an estimator $\hat{\gamma}$ for the true parameter $\gamma$ from the sample-path average of the resulting state process.
In the second step, we plug the estimator $\hat{\gamma}$ into the optimality equation
$F(\hat{\gamma},r)=0$ and obtain the corresponding optimal reflection level $r(\hat{\gamma})$. In the third step, we implement this learned policy (with reflecting level $r(\hat{\gamma})$) to minimize the cost over the remaining time horizon. We refer to $\tau$ as the length of the learning interval and denote by $\left\{Z^{\mathrm{LTO}}(t):t\geq0\right\}$ the state dynamics when the process is controlled by the LTO algorithm. Let $R^{\mathrm{LTO}}(x,T)$ be the regret of the LTO algorithm at any time $T\geq 0$, as defined in \eqref{eq:regret-2} with $Z(t)$ being replaced by $Z^{\mathrm{LTO}}(t)$.

\begin{algorithm}[htbp]
	\caption{Learn-then-optimize algorithm}
    \label{alg:1}
    \begin{algorithmic}[1] 
        \STATE Choose $\tau\in(0,\infty)$. Control the state with reflection level $0$ on $[0,\tau)$.
        \STATE At $\tau$, compute the estimator 
        \begin{equation}\label{eq:hat-gamma}
        \hat{\gamma}:=\left(\frac{1}{\tau}\int_0^{\tau}Z^{\mathrm{LTO}}(t)\mathrm{d} t\right)^{-1}\vee\underline{\gamma}.
        \end{equation}
        \STATE Compute $\hat{r}_{\tau}:=r(\hat{\gamma})$ by solving the optimality equation $F(\hat{\gamma},r)=0$.
        \STATE Control the state with reflection level $\hat{r}_{\tau}$.
    \end{algorithmic}
\end{algorithm}

\begin{theorem}\label{thm:LTO4generalh}
    There exists a  constant $C_1>0$, depending on primitive parameters of $h$, $\theta$, $\sigma$, $\underline{\gamma}$, and $x\geq0$, such that for all $T\geq 1$ the regret of the learn-then-optimize algorithm with learning interval length $\tau=\sqrt{T}$ satisfies
    \[
    R^{\mathrm{LTO}}(x,T)\leq C_1\left(1+\sqrt{T}\right).
    \]
\end{theorem}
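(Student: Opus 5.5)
We will bound the regret through the three-part decomposition described in the introduction: exploration, transient and learning regret.

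\textbf{Decomposition.} Let $\pi_r$ be the stationary law of the RBM reflected at $r$, and write $C(\gamma,r)=\pi_r(h)$. Since $C(\gamma,r^*)$ is the optimal stationary cost, we split
\begin{align*}
R^{\mathrm{LTO}}(x,T)={}&\mathbb{E}_x\Big[\int_0^{\tau}\big(h(Z^{\mathrm{LTO}}(t))-C(\gamma,r^*)\big)\mathrm{d}t\Big]+\mathbb{E}_x\Big[\int_{\tau}^{T}\big(h(Z^{\mathrm{LTO}}(t))-C(\gamma,\hat r_\tau)\big)\mathrm{d}t\Big]\\
&+\mathbb{E}_x\big[(T-\tau)\big(C(\gamma,\hat r_\tau)-C(\gamma,r^*)\big)\big]-\mathbb{E}_x\Big[\int_0^T\big(h(Z_{r^*}(t))-C(\gamma,r^*)\big)\mathrm{d}t\Big].
\end{align*}

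\textbf{Exploration and transient terms.} The first term is the exploration regret. It equals $\tau\,(C(\gamma,0)-C(\gamma,r^*))$ plus a transient error on $[0,\tau)$. The last term and the second (transient) term are bounded using exponential ergodicity of the RBM, with a rate independent of the reflection level, via a Foster--Lyapunov function of the form $V(z)=e^{\beta(z-r)}$ with $b<\beta<\gamma/2$. This gives
\[
\big|\mathbb{E}_y h(Z_r(t))-\pi_r(h)\big|\le K\,V(y)\,e^{-\kappa t}.
\]
Integrating over time yields a bound of order $K\,V(y)/\kappa$. For the second term we condition at $\tau$ and apply the strong Markov property with $y=Z^{\mathrm{LTO}}(\tau)$ and $r=\hat r_\tau$. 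Here we need $\mathbb{E}\,e^{\beta(Z^{\mathrm{LTO}}(\tau)-\hat r_\tau)}$ to be bounded uniformly in $\tau$. Since $\hat\gamma\ge\underline\gamma$, the level $\hat r_\tau$ lies in a compact set $[r(\underline\gamma),0]$; this requires monotonicity or continuity of $r(\cdot)$ together with a boundedness argument as $\hat\gamma\to\infty$. Moreover, $Z^{\mathrm{LTO}}(\tau)$ is a level-$0$ RBM started at $x$, whose exponential moment of order $\beta<\gamma$ is bounded uniformly in time. Together these give a transient contribution of $O(1)$, and hence an exploration regret of $O(\tau)$.

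\textbf{Learning regret, the main obstacle.} We must show $\mathbb{E}\big[C(\gamma,\hat r_\tau)-C(\gamma,r^*)\big]\le K/\tau$.
\begin{enumerate}[(1)]
\item \emph{Local regularity.} By the implicit function theorem applied to $F(u,r)=0$, with $\partial_rF(\gamma,r^*)=\gamma F(\gamma,r^*)-h'(r^*)=-h'(r^*)>0$, the map $r(\cdot)$ is $C^1$ near $\gamma$. Also $\partial_rC(\gamma,r^*)=0$ and $\partial_r^2C$ is bounded near $r^*$. Hence $C(\gamma,r(u))-C(\gamma,r^*)\le L(u-\gamma)^2$ for $|u-\gamma|\le\delta$.
\item \emph{MSE bound.} With $\bar Z_\tau=\tau^{-1}\int_0^\tau Z^{\mathrm{LTO}}\,\mathrm{d}t$, we have $\mathbb{E}(\bar Z_\tau-1/\gamma)^2\le K/\tau$. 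The bias is $O(1/\tau)$ by exponential ergodicity. The variance is $O(1/\tau)$ via the covariance decay $|\mathrm{Cov}(Z(s),Z(t))|\le Ke^{-\kappa|t-s|}$, which follows from the same ergodicity bound applied to the function $z$ together with second-moment bounds. Because $u\mapsto u^{-1}\vee\underline\gamma$ is Lipschitz near $1/\gamma$, this transfers to $\mathbb{E}\big[(\hat\gamma-\gamma)^2;\,G\big]\le K/\tau$ on the good event $G=\{|\bar Z_\tau-1/\gamma|\le\eta\}$.
\item \emph{Bad event.} On $G^c$, $C(\gamma,\hat r_\tau)$ is uniformly bounded, since $\hat r_\tau$ lies in a compact set and $C$ is continuous. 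By Chebyshev, $\mathbb{P}(G^c)\le K/(\eta^2\tau)$.
\end{enumerate}
Combining (1)--(3), the learning regret is at most $(T-\tau)K/\tau$.

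\textbf{Conclusion.} Summing the three parts gives $R^{\mathrm{LTO}}\le K(1+\tau+T/\tau)$. Choosing $\tau=\sqrt T$ gives $C_1(1+\sqrt T)$.

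The delicate points are twofold. The first is the uniform-in-level exponential ergodicity constant, which needs $\beta<\gamma/2$ so that $V$ dominates $h$ and the drift condition holds. The second is the variance bound for the time average. I would attack the latter first via the Poisson equation $\mathcal{L}g=z-1/\gamma$ with Neumann condition $g'(0)=0$, which is solvable explicitly for the RBM. Itô's formula then writes $\int_0^\tau(Z-1/\gamma)\,\mathrm{d}t$ as a martingale with quadratic variation $O(\tau)$ plus bounded boundary terms.
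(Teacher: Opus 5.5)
Your proposal is correct and is essentially the paper's own proof. It uses the same split into an exploration term, three transient terms (levels $0$, $\hat r_\tau$, $r^*$) and a learning term. It controls the transient terms by exponential ergodicity in a $V$-norm with $V(z)=e^{\beta(z-r)}$ and a rate uniform in $r$, together with the strong Markov property at $\tau$ and a uniform exponential moment of $Z_0(\tau)$; the paper verifies this via Sarantsev's modified Lyapunov functions, since $V'(r)\neq0$ violates the Neumann condition. The learning term is handled, as in the paper, by the implicit-function-theorem Lipschitz and quadratic-gap bounds, the Poisson-equation MSE bound with $g(z)=z^2/(2\theta)$, and Chebyshev on the bad event, followed by the choice $\tau=\sqrt{T}$.

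The one step you left open is the compact range of $\hat r_\tau$. The paper does not use monotonicity of $r(\cdot)$ for this. Instead it uses the uniform estimate $F(u,r)\le e^{ur}\bigl(M_0-h(r)\bigr)$ with $M_0=\sup_{u\ge\underline\gamma}u\int_0^\infty h(z)e^{-uz}\,\mathrm{d}z<\infty$, which forces $r(u)\in(-K_h,0)$ for every $u\ge\underline\gamma$.
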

The proof of Theorem~\ref{thm:LTO4generalh} is provided in Section~\ref{sec:6}. In fact, the initial exploration policy is not critical, as any reflection control can generate data to estimate $\gamma$ by Lemma~\ref{lemma:ergodicity}.
Theorem~\ref{thm:LTO4generalh} reveals that the exploration-exploitation tradeoff is mediated by the choice of the learning period $\tau$: a longer learning period improves the accuracy of $\hat{\gamma}$, but reduces the time available for exploiting $r(\hat{\gamma})$. In the context of this one-time estimation learning scheme for $\gamma$, the regret bound is optimized when the learning period is of order $\sqrt{T}$.

The LTO algorithm serves as a transparent benchmark for understanding the exploration-exploitation tradeoff. Since the exploration policy is not critical, the state process controlled by the reflecting level $r(\hat{\gamma})$ remains informative and can be used for further learning. Building on this insight, we next consider the adaptive-updating algorithm (see Algorithm~\ref{alg:2}) that integrates exploration and exploitation simultaneously over time.

Rather than estimating $\gamma$ once and keeping the resulting reflecting level fixed, the adaptive-updating (AU) algorithm allows the controller to continuously estimate $\gamma$ and update the policy over doubling intervals. This design does not depend on the horizon $T$ and achieves a better regret rate.

\begin{algorithm}[htbp]
	\caption{Adaptive-updating algorithm}
    \label{alg:2}
    \begin{algorithmic}[1] 
        \STATE Initialize the values $k=0$, $\hat{r}_0=0,\tau_0=0$.
        \STATE Set $\tau_{k}=2^{k}-1$.
        \STATE Control the state with reflection level $\hat{r}_k$ on $[\tau_k,\tau_{k+1})$.
        \STATE At $\tau_{k+1}$ compute the estimator 
        \begin{align}\label{eq:hat-gamma-k}
            \hat{\gamma}_{k+1}:=\left(\frac{1}{\Delta\tau_k}\int_{\tau_k}^{\tau_{k+1}}\left(Z^{\mathrm{AU}}(s)-\hat{r}_k\right)\mathrm{d}s\right)^{-1}\vee\underline{\gamma},
        \end{align}
        where $\Delta \tau_k=\tau_{k+1}-\tau_k$.
        \STATE Compute $\hat{r}_{k+1}:=r(\hat{\gamma}_{k+1})$ by solving the optimality equation $F(\hat{\gamma}_{k+1},r)=0$.
        \STATE $k\leftarrow k+1$ and go to Step $2$.
    \end{algorithmic}
\end{algorithm}

Noting that only the data from the previous interval is used in \eqref{eq:hat-gamma-k}, we also propose the full-history adaptive-updating (AU-FH) algorithm (see Algorithm~\ref{alg:3}) that leverages all historical data to construct the estimator. Theoretically, this algorithm still achieves a regret of order $O(\log T)$, and it demonstrates better performance in numerical experiments. Let $\left\{Z^{\mathrm{AU}}(t):t\geq0\right\}$ and $\left\{Z^{\mathrm{AU\text{-}FH}}(t):t\geq0\right\}$ denote the state processes when applying the AU and AU-FH algorithms, and $R^{\mathrm{AU}}(x,T)$ and $R^{\mathrm{AU\text{-}FH}}(x,T)$ be the regrets of the AU and AU-FH algorithms at any time $T\geq 0$ as defined in \eqref{eq:regret-2}, with $Z(t)$ being replaced by $Z^{\mathrm{AU}}(t)$ and $Z^{\mathrm{AU\text{-}FH}}(t)$, respectively.

\begin{algorithm}[htbp]
	\caption{Full-history adaptive-updating algorithm}
    \label{alg:3}
    \begin{algorithmic}[1] 
        \STATE Initialize the values $k=0$, $\tilde{r}_0=0,\tau_0=0$.
        \STATE Set $\tau_{k}=2^{k}-1$.
        \STATE Control the state with reflection level $\tilde{r}_k$ on $[\tau_k,\tau_{k+1})$.
        \STATE At $\tau_{k+1}$ compute the estimator 
        \begin{align}\label{eq:tilde-gamma-k}
            \tilde{\gamma}_{k+1}:=\left(\frac{1}{\tau_{k+1}}\sum_{i=0}^k\int_{\tau_i}^{\tau_{i+1}}\left(Z^{\mathrm{AU\text{-}FH}}(s)-\tilde{r}_i\right)\mathrm{d}s\right)^{-1}\vee\underline{\gamma},
        \end{align}
        where $\Delta \tau_k=\tau_{k+1}-\tau_k$.
        \STATE Compute $\tilde{r}_{k+1}:=r(\tilde{\gamma}_{k+1})$ by solving the optimality equation $F(\tilde{\gamma}_{k+1},r)=0$.
        \STATE $k\leftarrow k+1$ and go to Step $2$.
    \end{algorithmic}
\end{algorithm}
\begin{theorem}\label{thm:AU4generalh}
    There exists a constants $C_2,C_3>0$, depending on primitive parameters of $h$, $\theta$, $\sigma$, $\underline{\gamma}$, and $x\geq0$, such that the expected regrets of the $\mathrm{AU}$ and $\mathrm{AU}\text{-}\mathrm{FH}$ algorithms satisfy for all $T\geq 1$,
    \[
    R^{\mathrm{AU}}(x,T)\leq C_2\left(1+\log{T}\right),\quad R^{\mathrm{AU\text{-}FH}}(x,T)\leq C_3\left(1+\log{T}\right).
    \]
\end{theorem}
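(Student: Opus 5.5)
We prove Theorem~\ref{thm:AU4generalh} by splitting the regret over the doubling intervals $[\tau_k,\tau_{k+1})$, $k=0,\dots,K_T$ with $K_T=\lceil\log_2(T+1)\rceil$. Let $\hat r_k$ be the (random, $\mathcal F_{\tau_k}$-measurable) reflecting level used on interval $k$. On that interval we write
\[
h(Z^{\mathrm{AU}}(t))-h(Z_{r^*}(t))=\big[h(Z^{\mathrm{AU}}(t))-C(\gamma,\hat r_k)\big]+\big[C(\gamma,\hat r_k)-C(\gamma,r^*)\big]+\big[C(\gamma,r^*)-h(Z_{r^*}(t))\big].
\]
The three brackets are the transient, learning and benchmark-transient parts. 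After integrating and taking expectations, the regret is bounded by the sum over $k\le K_T$ of per-interval bounds. Since $K_T=O(\log T)$, it suffices to show that each interval contributes $O(1)$ uniformly in $k$.

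\textbf{Transient part.} Conditionally on $\mathcal F_{\tau_k}$, the process on $[\tau_k,\tau_{k+1})$ is a reflected Brownian motion at level $\hat r_k$ started from $Z^{\mathrm{AU}}(\tau_k)$, by the strong Markov property.
\begin{itemize}
\item We use the Lyapunov function $V(z)=e^{\beta(z-r)}+$ (a term controlling $z\to-\infty$ behaviour, irrelevant since $z\ge r$), with $b<\beta<\gamma/2$.
\item Exponential ergodicity of the RBM gives $|\mathbb E_y h(Z_r(t))-C(\gamma,r)|\le K\,V_r(y)e^{-\kappa t}$, with $\kappa$ independent of $r$ by the explicit Sarantsev-type rate. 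Integrating over $t$ yields a per-interval bound $K\kappa^{-1}\mathbb E[V_{\hat r_k}(Z^{\mathrm{AU}}(\tau_k))]$.
\item We need $\sup_k\mathbb E[e^{\beta(Z^{\mathrm{AU}}(\tau_k)-\hat r_k)}]<\infty$, together with a bound on $h(\hat r_k)$.
\item Since $\hat\gamma_k\ge\underline\gamma$ and $r(\cdot)$ is continuous, $\hat r_k$ ranges in a compact set $[\underline r,0]$. This uses monotonicity or boundedness of $u\mapsto r(u)$ on $[\underline\gamma,\infty)$, which I would check from $F$; as $u\to\infty$ we get $r(u)\to0$.
\item By monotonicity of the moving-boundary Skorokhod map, $Z^{\mathrm{AU}}$ is pathwise dominated by the fixed-boundary RBM reflected at $0$ started from $x$ (all levels are $\le 0$). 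It is also bounded below by the level. Hence $Z^{\mathrm{AU}}(\tau_k)-\hat r_k\le Z_0(\tau_k)-\underline r$.
\item Since $\sup_t\mathbb E_x e^{\beta Z_0(t)}<\infty$ for $\beta<\gamma$, the exponential moment is uniformly bounded.
\end{itemize}
The benchmark term is handled the same way with the deterministic level $r^*$ and a single, non-restarted process, giving $O(1)$ overall.

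\textbf{Learning part.} Here the target is $\mathbb E[C(\gamma,\hat r_{k})-C(\gamma,r^*)]\le c/\Delta\tau_{k-1}$. Multiplying by $\Delta\tau_k=2\Delta\tau_{k-1}$ then gives $O(1)$ per interval.
\begin{itemize}
\item Since $\partial_rC(\gamma,r^*)=0$ and $C$ is $C^2$ near $r^*$ with $r(\cdot)$ locally Lipschitz (implicit function theorem, using $\partial_rF(\gamma,r^*)=-h'(r^*)>0$), on a good event $\{|\hat\gamma_k-\gamma|\le\delta\}$ we have $C(\gamma,\hat r_k)-C(\gamma,r^*)\le L|\hat\gamma_k-\gamma|^2\le L'|\bar m_k-1/\gamma|^2$. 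Here $\bar m_k$ is the time average in \eqref{eq:hat-gamma-k}.
\item On the complement, the cost gap is uniformly bounded because the $\hat r_k$ lie in a compact set. The probability of the complement is at most $\mathbb P(|\bar m_k-1/\gamma|>\delta')\le\delta'^{-2}\mathbb E|\bar m_k-1/\gamma|^2$ by Chebyshev.
\item Both pieces therefore reduce to the MSE bound $\mathbb E|\bar m_k-1/\gamma|^2\le c/\Delta\tau_{k-1}$.
\item Conditionally on the switching state $y=Z^{\mathrm{AU}}(\tau_{k-1})-\hat r_{k-1}$, this is the MSE of the time average of a reflected Brownian motion over a horizon $\Delta$. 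I would bound it by $c(1+y^2)/\Delta$. The bias is $O((1+y)/\Delta)$ by the exponential ergodicity above, and the variance is $O(1/\Delta)$. For the variance I would use the Poisson equation $g''\sigma^2/2+\theta g'=z-1/\gamma$ with Neumann condition at $0$ and the martingale decomposition, where $g$ has polynomial growth.
\item The polynomial moments of $y$ are uniformly bounded by the same domination argument.
\end{itemize}

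For AU-FH the same scheme applies. The average over $[0,\tau_k]$ is a concatenation of segments, and its MSE is at most $c/\tau_k$: sum the martingale terms across segments, with orthogonal increments, and add the $O(1)$ bias per segment over $k$ segments. This gives $O(k^2/\tau_k^2+1/\tau_k)=O(1/\tau_k)$.

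The main obstacle I expect is the history dependence. The switching state and level at $\tau_k$ are random and correlated with the data. The domination argument via the moving-boundary Skorokhod map, together with showing that the rate constant $\kappa$ and the MSE constants are uniform over $r\in[\underline r,0]$, is where the care is needed.
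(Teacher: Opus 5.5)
Your proposal is correct and follows the paper's proof essentially step by step. It uses the same split into transient, learning and benchmark-transient pieces over the doubling intervals, and the Lyapunov bound with exponential rate uniform in $r\le 0$ (Proposition~\ref{prop:exp-convergence-rate-rbm}). It also uses domination of the switching states by $Z_0$ through the moving-boundary Skorokhod map, and the IFT-based quadratic cost gap combined with a good/bad event split, Chebyshev, and the Poisson-equation MSE bound with $g(z)=z^2/(2\theta)$. One correction there: the dependence on the switching state $y$ is actually quartic rather than quadratic (the bias alone is of order $y^2/\Delta$), which is harmless given the uniform moment bounds.

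The one genuine variation is AU-FH. The paper applies Minkowski to the per-segment $L^2$ errors, $\tau_k^{-1}\sum_{i<k}\Delta\tau_i\sqrt{C_4/\Delta\tau_i}=O(\tau_k^{-1/2})$, which relies on the geometric growth of $\Delta\tau_i$. Your single martingale over $[0,\tau_k]$ with orthogonal increments, plus $k$ boundary terms, gives $O(k^2/\tau_k^2+1/\tau_k)$ and only needs $k^2=O(\tau_k)$.
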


\section{Sample-Path Construction and Regret Decomposition}\label{sec:3}
In this subsection, we use the reflected SDE in \eqref{eq:SDE-startfrom-s} to construct the sample paths for the three algorithms. This sample path representation allows us to decompose the regret over the learning and updating interval into three parts: exploration regret, transient regret and learning regret (defined as follows).
\subsection{Learn-then-optimize  Algorithm}\label{sec:3.1}
By Algorithm~\ref{alg:1}, we control the state with reflection level $0$ over $[0,\tau)$, so we define $Z^{\mathrm{LTO}}(t):=Z_0(t)$ on $[0,\tau)$. Since $Z_0$ is a standard reflected Brownian motion (RBM), the occupation time formula gives
\[
\int_0^\tau \mathbf{1}_{\{Z_0(s)=0\}}\mathrm{d} \langle Z_0\rangle_s=\int_{\mathbb{R}}\mathbf{1}_{\{a=0\}}L_{\tau}^a(Z_0)\mathrm{d}a=0,
\]
where $\langle Z_0\rangle$ denotes the predictable quadratic variation and
\[
L_{\tau}^a(Z_0):=\lim_{\varepsilon\downarrow0}\frac{1}{2\varepsilon}\int_0^\tau \mathbf{1}_{\{\lvert Z_0(s)-a\rvert<\varepsilon\}}\mathrm{d} \langle Z_0\rangle_s
\] denotes the local time at level $a$.
Since $\mathrm{d}\langle Z_0\rangle_s=\sigma^2\mathrm{d}s$, substituting into the above equality yields $\int_0^\tau \mathbf{1}_{\{Z_0(s)=0\}}\mathrm{d}s=0$.
This implies that $Z_0(s)>0$ for almost every $s\in[0,\tau]$,
\[
\int_0^{\tau}Z^{\mathrm{LTO}}(t)\mathrm{d}t=\int_0^{\tau}Z_0(t)\mathrm{d}t>0.
\]
Thus the estimator $\hat{\gamma}$ defined in \eqref{eq:hat-gamma} is well-defined. Then the learned reflection level $\hat{r}_{\tau}=r(\hat{\gamma})<0\leq Z_0(\tau)$ since the optimal equation has a unique negative solution. Let $X:=Z_0(\tau)$; we therefore define $Z^{\mathrm{LTO}}(t):=Z_{\hat{r}_{\tau}}^{\tau, X}(t)$ on $[\tau,\infty)$.

The regret for LTO algorithm can be decomposed as follows: 

\begin{align}\label{eq:regret-decomposition-LTO}
    R^{\mathrm{LTO}}(x,T)&=\mathbb{E}_x\left[\int_0^T \left(h\left(Z^{\mathrm{LTO}}(t)\right)-h\left(Z_{r^*}(t)\right)\right)\mathrm{d}t \right]\nonumber\\
    &=\underbrace{\mathbb{E}_x\left[\int_0^{\tau} \left(h\left(Z_0(t)\right)-C(\gamma,0)\right)\mathrm{d}t \right]}_{=:R_{1}^{\mathrm{LTO}}}+\underbrace{\mathbb{E}_x\left[\int_0^{\tau} \left(C(\gamma,0)-C(\gamma,r^*)\right)\mathrm{d}t \right]}_{=:R_{2}^{\mathrm{LTO}}}\nonumber\\
    &\quad+\underbrace{\mathbb{E}_x\left[\int_{\tau}^T \left(h\left(Z_{\hat{r}_{\tau}}^{\tau, X}(t)\right)-C(\gamma,\hat{r}_{\tau})\right)\mathrm{d}t \right]}_{=:R_{3}^{\mathrm{LTO}}}+\underbrace{\mathbb{E}_x\left[\int_{\tau}^T \left(C(\gamma,\hat{r}_{\tau})-C(\gamma,r^*)\right)\mathrm{d}t \right]}_{=:R_{4}^{\mathrm{LTO}}}\nonumber\\
    &\quad+\underbrace{\mathbb{E}_x\left[\int_0^T \left(C(\gamma,r^*)-h\left(Z_{r^*}(t)\right)\right)\mathrm{d}t \right]}_{=:R_5^{\mathrm{LTO}}}.
\end{align}
$R_{1}^{\mathrm{LTO}}$ denotes the difference between the expected cost of the process using reflection control at level $0$ over $[0,\tau)$ and the corresponding stationary cost up to time $\tau$, which is referred to as the transient regret. $R_{2}^{\mathrm{LTO}}$ is the stationary cost gap between $0$ reflection control and optimal reflection control, which is referred to as the exploration regret. $R_{3}^{\mathrm{LTO}}$ is the transient regret associated with reflection level $\hat{r}_{\tau}$ on $[\tau,T)$. $R_{4}^{\mathrm{LTO}}$ denotes the stationary cost gap between the reflection control induced by $\hat{r}_{\tau}$ and $r^*$. We refer to $R_{4}^{\mathrm{LTO}}$ as the learning regret, since it captures the regret caused by implementing the learned policy instead of the optimal policy. $R_5^{\mathrm{LTO}}$ is the transient regret for optimal reflection level $r^*$.

\subsection{Adaptive-updating  Algorithm}\label{sec:3.2}
Define the updating grid by
\[
\tau_0=0,\quad\Delta\tau_k=2^{k}\quad k\in\mathbb{N}. 
\]
Then
\[
\tau_{k+1}=\tau_k+\Delta\tau_k=2^{k+1}-1\quad k\in\mathbb{N}.
\]
Let $I_k=[\tau_k,\tau_{k+1})$ denote the $k$-th interval, and initialize $\hat r_0=0$ and $X_0=x$. On $I_0$, we set $Z^{\mathrm{AU}}(t):=Z_0(t)$. As shown earlier, $\hat{\gamma}_1$ is well-defined and the learned reflection level $\hat{r}_1=r(\hat{\gamma}_1)<0\leq Z_0(\tau_1)$. So we let $X_1:=Z_0(\tau_1)$. 

We now recursively construct the sample path for the AU algorithm: given $\hat{r}_k,X_k$ for $k\geq1$, define 
\begin{equation}\label{eq:sample-path-AU}
Z^{\mathrm{AU}}(t):=Z^{\tau_k,X_k}_{\hat r_k}(t) \quad t\in I_k.
\end{equation}
Since on $I_k$, the state evolves as a RBM with reflection level $\hat{r}_k$ and initial state $X_k$, the estimator $\hat{\gamma}_{k+1}$ in \eqref{eq:hat-gamma-k} is well-defined and $\hat{r}_{k+1}<0$ is the unique negative solution of optimality equation $F(\hat{\gamma}_{k+1},r)=0$. It is possible that the new reflection level $\hat{r}_{k+1}$ is higher than the left limit of the current state $Z^{\mathrm{AU}}(\tau_{k+1}-)$; we therefore define the post-update state by
\begin{equation}\label{eq:poststate}
    X_{k+1}:=Z^{\mathrm{AU}}(\tau_{k+1}-)\vee \hat r_{k+1},
\end{equation}
and the control (denoted by $Y^{AU}(t)$ for AU algorithm) has an instantaneous jump at $\tau_{k+1}$:
\[
\Delta{Y}^{\mathrm{AU}}(\tau_{k+1})=\left(\hat{r}_{k+1}-Z^{\mathrm{AU}}(\tau_{k+1}-)\right)^+,
\]
where $\Delta{Y}^{\mathrm{AU}}(t):=Y^{\mathrm{AU}}(t)-Y^{\mathrm{AU}}(t-)$.
Since there is no impulse control cost in our setting, this jump only ensures that the next interval starts from an admissible state $X_{k+1}\geq \hat r_{k+1}$. This completes the recursion for $\hat{r}_{k+1}$ and $X_{k+1}$.
\begin{remark}
    From the sample path construction of the $\mathrm{AU}$ algorithm above, we see that the control is not a pure reflection control: it acts as a reflection control within each interval $I_k$, but may involve an impulse control at the interval switching points $\tau_{k+1}$.
\end{remark}

For a fixed $T\geq1$, let $N:=\max\{k:2^{k}-1\leq T\}=\lfloor \log_2 (T+1)\rfloor$, then $T<2^{N+1}-1$ and
\[
\tau_N=\sum_{n=0}^{N-1} \Delta \tau_n=\sum_{n=0}^{N-1} 2^n=2^N-1\leq T<2^{N+1}-1=\sum_{n=0}^{N} 2^n=\tau_{N+1}.
\]
Thus, we have

\begin{align*}
    R^{\mathrm{AU}}(x,T)&=\mathbb{E}_x\left[\int_0^T \left(h\left(Z^{\mathrm{AU}}(t)\right)-h\left(Z_{r^*}(t)\right)\right)\mathrm{d}t \right]\\
    &\leq\underbrace{\mathbb{E}_x\left[\sum_{k=0}^{N-1}\int_{\tau_k}^{\tau_{k+1}}  h\left(Z_{\hat{r}_k}^{\tau_k,X_k}(t)\right)-C(\gamma,\hat{r}_k)\mathrm{d}t\right]+\mathbb{E}_x\left[\int_{\tau_{N}}^T  h\left(Z_{\hat{r}_N}^{\tau_N,X_N}(t)\right)-C(\gamma,\hat{r}_{N})\mathrm{d}t\right]}_{=:R_1^{\mathrm{AU}}}\\
    &\quad+\underbrace{\mathbb{E}_x\left[\int_{\tau_0}^{\tau_{1}} \left\lvert C(\gamma,\hat{r}_0)-C(\gamma,r^*)\right\rvert\mathrm{d}t\right]}_{=:R_{2}^{\mathrm{AU}}}+\underbrace{\mathbb{E}_x\left[\sum_{k=1}^{N}\int_{\tau_k}^{\tau_{k+1}} \left\lvert C(\gamma,\hat{r}_k)-C(\gamma,r^*)\right\rvert\mathrm{d}t\right]}_{=:R_{3}^{\mathrm{AU}}}\\
    &\quad+\underbrace{\mathbb{E}_x\left[\int_0^T \left( C(\gamma,r^*)-h\left(Z_{r^*}(t)\right)\right)\mathrm{d}t \right]}_{=:R_4^{\mathrm{AU}}},
\end{align*}
where the inequality follows from $C(\gamma,\hat{r}_N)\geq C(\gamma,r^*)$ and $\tau_N\leq T<\tau_{N+1}$.
$R_1^{\mathrm{AU}}$ is the combination of the segment transient regret for policy $\hat{r}_k$ on $[\tau_k,\tau_{k+1})$ and residual transient regret on $[\tau_{N},T)$. 
$R_{2}^{\mathrm{AU}}$ is the exploration regret incurred on the initial interval, and $R_{3}^{\mathrm{AU}}$ is the cumulative learning regret over the subsequent updating intervals.
Finally, $R_4^{\mathrm{AU}}$ is the transient regret for optimal policy $r^*$.

\subsection{Full-history Adaptive-updating Algorithm}

The sample path construction of the AU-FH algorithm is almost identical to that of the AU algorithm, except for two differences: $\hat{r}_k$ is replaced with $\tilde{r}_k$ in each update, and the post-update state is denoted by $\tilde{X}_k$.
The corresponding regret decomposition is also completely analogous: we denote by $R_1^{\mathrm{AU\text{-}FH}}$, $R_2^{\mathrm{AU\text{-}FH}}$, $R_3^{\mathrm{AU\text{-}FH}}$, and $R_4^{\mathrm{AU\text{-}FH}}$ the terms obtained by replacing $\hat{r}_k, X_k$ with $\tilde{r}_k,\tilde{X}_k$ in $R_1^{\mathrm{AU}}$, $R_2^{\mathrm{AU}}$, $R_3^{\mathrm{AU}}$, and $R_4^{\mathrm{AU}}$, respectively.

\section{Transient Regret Analysis}\label{sec:4}
From our regret decomposition, we observe that $R_{1}^{\mathrm{LTO}}$, $R_{3}^{\mathrm{LTO}}$, $R_5^{\mathrm{LTO}}$, $R_1^{\mathrm{AU}}$, $R_4^{\mathrm{AU}}$, $R_1^{\mathrm{AU\text{-}FH}}$, and $R_4^{\mathrm{AU\text{-}FH}}$ are all transient regrets corresponding to specific reflected Brownian motions on a finite interval.
We first explain how the Foster-Lyapunov inequality for an appropriately chosen Lyapunov function can be used to bound the transient regret. We then establish the explicit rate of exponential convergence to the stationary distribution for RBMs, which provides the quantitative finite-time estimates required in the subsequent transient regret analysis.

\subsection{Explicit Exponential Convergence Rate for RBM }\label{sec:4.1}
For each fixed reflection level $r\leq0$ and $x\geq r$, the reflected Brownian motion $Z_r=\{Z_r(t):t\geq0\}$ defined in \eqref{eq:SDE-startfrom-s} (for $s=0$ and $y=x$) is a time-homogeneous continuous-time Markov process on $[r,\infty)$ starting from $x$. Let $\mathbb{P}_t^{(r)}(x,A):=\mathbb{P}\left(Z_r(t)\in A\right)$ for $A\in\mathcal{B}\left([r,\infty)\right)$ be the transition kernel of $Z_r$, where $\mathcal{B}\left([r,\infty)\right)$ is the Borel $\sigma$-field on $[r,\infty)$. The associated transition semigroup, still denoted by $\mathbb{P}_t^{(r)}$,  acts on any measurable function $f$ by
\begin{align}\label{eq:def-P}
\mathbb{P}_t^{(r)}f(x):=\int_{[r,\infty)} f(y)\mathbb{P}_t^{(r)}(x,\mathrm{d}y)=\mathbb{E}_x\left[f(Z_r(t))\right],
\end{align}
whenever the expectation is well-defined. Recalling the definition of $\pi_r$ in \eqref{eq:stationary-Zr}, we also define for a measurable function $f$, 
$
\pi_r(f):=\int_{\mathbb{R}} f(z)\pi_r(\mathrm{d}z)=\int_{[r,\infty)} f(z)\pi_r(\mathrm{d}z).
$
Then we can bound the transient regret of $Z_r$ over time $T$ by the time integral of the deviation between the transition semigroup $\mathbb{P}_t^{(r)}$ and the stationary distribution $\pi_r$ (as evaluated on the holding cost $h$). This bounding relation is formally expressed as:
\begin{align}\label{eq:transient-rewrite}
\mathbb{E}_x\left[\int_0^T\left(h\left(Z_r(t)\right)-C(\gamma,r)\right)\mathrm{d}t\right]&=\int_0^T\left( \int_{[r,\infty)}h(y)\mathbb{P}_t^{(r)}(x,\mathrm{d}y)-\int_{[r,\infty)}h(y)\pi_r(\mathrm{d}y)\right)\mathrm{d}t\nonumber\\
&\leq\int_0^T\left\lvert \mathbb{P}_t^{(r)}h(x)-\pi_r(h) \right\rvert\mathrm{d}t.
\end{align}

For a continuous-time Markov process $\{X(t):t\geq0\}$ starting from $x$ with transition kernel $\mathbb{P}_t$ and generator $\mathscr{A}$. The standard Foster-Lyapunov method (see~\citet{down1995exponential}) states that if we can find a suitable Lyapunov function $V$ and verify the exponential drift condition
\begin{align}\label{eq:exponential-drift}
\mathscr{A}V\leq-c_1V+c_2\mathbf{1}_C
\end{align}
for a petite set $C$ and constants $c_1,c_2>0$, then the process has exponential ergodicity with stationary distribution $\pi$. And we can obtain an exponential ergodicity bound of the form
\begin{align}\label{eq:exponential-ergodicity}
\left\|\mathbb{P}_t(x,\cdot)-\pi\right\|_{V}\leq M^{\prime} V(x)e^{-\rho t}
\end{align}
for some constants $M^{\prime}<\infty$ and $\rho>0$. Here, for a measurable function $V: [r,\infty)\to[1,\infty)$ and a Borel measure $\nu$ on $[r,\infty)$, we define the $V$-norm by $\|\nu\|_V:=\sup_{\lvert f\rvert\leq V}\left\lvert \int_{[r,\infty)} f\mathrm{d}\nu\right\rvert$.

Thus, we aim to establish \eqref{eq:exponential-ergodicity} for the RBM and use it to bound \eqref{eq:transient-rewrite}.
For the RBM with reflection level $r$, the generator is given by
\begin{align}\label{eq:generator}
\Gamma f(x)=\theta f'(x)+\frac{\sigma^2}{2}f''(x),\quad x>r,
\end{align}
for test functions $f$ satisfying the Neumann boundary condition $f^{\prime}(r)=0$. A natural candidate Lyapunov function is the exponential function $V_r(z):=e^{\beta(z-r)},z\geq r$ for some parameter $\beta>0$. This choice is motivated by the fact that direct computation of $\Gamma V_r(z)$ yields
\[
\Gamma V_r(z)=\left(\theta\beta+\frac{\sigma^2}{2}\beta^2\right)V_r(z), 
\]
which closely resembles the exponential drift condition \eqref{eq:exponential-drift} if $\theta\beta+\sigma^2\beta^2/2<0$. However, we cannot directly apply the generator $\Gamma$ to this function---since $V_r^{\prime}(r)=\beta\neq0$, violating the Neumann boundary condition. Following~\citet{Sarantsev2016Explicit,sarantsev2017reflected}, we resolve this issue by working with a modified Lyapunov function. Specifically, instead of imposing the drift condition globally, we only require the supermartingale property up to the first hitting time of the reflecting boundary. Define
\begin{align}\label{eq:tau-r}
\tau_r:=\inf\{t\geq 0: Z_r(t)=r\}.
\end{align}
A nondecreasing function $V:[r,\infty)\to[1,\infty)$ is called a modified Lyapunov function with Lyapunov constant $k>0$ if
\[
V(Z_r(t\wedge \tau_r))+k\int_0^{t\wedge \tau_r}V(Z_r(s))\mathrm{d}s,\quad t\geq 0
\]
is a supermartingale for every initial state $x\geq r$. 
We say that the process $Z_r$ is stochastically ordered if, for every $r\leq x\leq y$, one can construct on a common probability space two copies denoted by $Z_r^{x}$ and $Z_r^{y}$ (starting from $x$ and $y$ respectively), such that
\[
Z_r^{x}(t)\leq Z_r^{y}(t),\quad t\geq 0,\quad \text{a.s.}
\]
Choose $\beta:=\underline{\gamma}/2\in(b,\underline{\gamma})$. In the following lemma, we show that $V_r(x)=e^{\underline{\gamma}(x-r)/2}$ is a modified Lyapunov function and prove some properties of $Z_r$.
\begin{lemma}\label{lemma:rbm-ergodicity}
    For every $r\leq0$, the following hold.
    \begin{enumerate}[$(i)$]
    \item $Z_r$ is stochastically ordered;
    \item $V_r(z)=e^{\underline{\gamma}(z-r)/2}$ is a modified Lyapunov function with Lyapunov constant $\sigma^2\underline{\gamma}(2\gamma-\underline{\gamma})/8$;
    \item $\int_{[r,\infty)}V_r\mathrm{d}\pi_r=\frac{\gamma}{\gamma-\underline{\gamma}/2}<\infty$.
    \end{enumerate}
\end{lemma}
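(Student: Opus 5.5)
We prove the three parts separately, each by a direct computation from the Skorokhod representation \eqref{eq:Skorokhod-mapping-Z} and It\^o's formula.

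For $(i)$, we drive both copies with the same Brownian motion $B$. Fix $r\le x\le y$ and define $Z_r^x$ and $Z_r^y$ by \eqref{eq:Skorokhod-mapping-Z}. Both are built from the same free path $w(u)=\theta u+\sigma B(u)$, started at $x$ and at $y$ respectively. Write $\Phi(a):=a+\sup_{u\le t}(r-a-w(u))^+$. We claim $\Phi$ is nondecreasing in the initial point $a$. Indeed,
\[
\Phi(a)=\max\Big\{a,\ \sup_{u\le t}\big(r-w(u)\big)\Big\}+w(t).
\]
This follows from $a+\sup_u(c_u-a)^+=\max\{a,\sup_u c_u\}$. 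The right-hand side is visibly nondecreasing in $a$. Hence $Z_r^x(t)\le Z_r^y(t)$ for all $t\ge 0$, almost surely. The same identity also makes plain that the ordering is pathwise.

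For $(ii)$, fix $x\ge r$ and set $V=V_r$ with $\beta=\underline{\gamma}/2$. On $[0,\tau_r)$ the process $Z_r$ has not touched $r$, so $Y_r$ is constant there. Applying It\^o's formula to $V(Z_r(t\wedge\tau_r))$ gives
\[
\mathrm{d}V=\Big(\theta\beta+\tfrac{\sigma^2}{2}\beta^2\Big)V\,\mathrm{d}t+\sigma\beta V\,\mathrm{d}B .
\]
Now substitute $\theta=-\gamma\sigma^2/2$. This gives
\[
\theta\beta+\tfrac{\sigma^2}{2}\beta^2=-\tfrac{\sigma^2\beta}{2}(\gamma-\beta)=-\tfrac{\sigma^2\underline{\gamma}}{8}(2\gamma-\underline{\gamma})=:-k.
\]
Here $k>0$ because $\gamma\ge\underline{\gamma}$. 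It follows that
\[
V(Z_r(t\wedge\tau_r))+k\int_0^{t\wedge\tau_r}V(Z_r(s))\,\mathrm{d}s
\]
equals $V(x)$ plus a local martingale.

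The main technical point is upgrading this local martingale to a true martingale, which yields the supermartingale property (in fact equality). We do this by localization. Let $\sigma_n$ be the exit time of $(r,n)$, and stop additionally at $\sigma_n$. On the stopped interval the integrand $\sigma\beta V$ is bounded, so the stopped process is a true martingale. Letting $n\to\infty$, we use Fatou's lemma for the $V(\cdot)$ term and monotone convergence for the integral term. This yields the supermartingale inequality
\[
\mathbb{E}\big[V(Z_r(t\wedge\tau_r))+k\textstyle\int_0^{t\wedge\tau_r}V\,\mathrm{d}s\,\big|\,\mathcal{F}_s\big]\le \text{value at } s .
\]
Finally, $V$ is nondecreasing and satisfies $V\ge 1$ on $[r,\infty)$, as the definition of a modified Lyapunov function requires.

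For $(iii)$, we integrate directly against the stationary law \eqref{eq:stationary-Zr}:
\[
\int V_r\,\mathrm{d}\pi_r=\gamma\int_0^\infty e^{(\underline{\gamma}/2-\gamma)z}\,\mathrm{d}z=\frac{\gamma}{\gamma-\underline{\gamma}/2}.
\]
The integral converges because $\gamma\ge\underline{\gamma}>\underline{\gamma}/2$.
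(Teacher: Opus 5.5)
Your proposal is correct and follows the paper's proof essentially step for step. You prove (i) by pathwise comparison under a common Brownian motion, making the monotonicity explicit through $\Phi(a)=\max\{a,\sup_{u\le t}(r-w(u))\}+w(t)$; you prove (ii) by It\^o's formula up to $\tau_r$ with the same Lyapunov constant; and you prove (iii) by direct integration. In (ii) the paper just observes that the stopped expression is nonnegative and equals $V_r(x)$ plus a local martingale, hence is a supermartingale, and your localization-plus-Fatou argument is exactly the proof of that fact. Note that this argument yields the supermartingale inequality rather than the ``true martingale'' you announce, but the inequality is all the lemma needs.
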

The proof of Lemma~\ref{lemma:rbm-ergodicity} is in Appendix~\ref{app:A.1}.  By Theorem $4.1$ in~\citet{Sarantsev2016Explicit}, the above lemma implies the following explicit rate of exponential convergence to the stationary distribution for RBMs.
\begin{proposition}\label{prop:exp-convergence-rate-rbm}
    For each $r\leq0$ and $x\geq r$,
    \begin{align}\label{eq:ergodicity-convergence}
    \left\|\mathbb{P}_t^{(r)}(x,\cdot)-\pi_r\right\|_{V_r}=\left[\frac{\gamma}{\gamma-\underline{\gamma}/2}+e^{\underline{\gamma}(x-r)/2}\right]e^{-\sigma^2\underline{\gamma}(2\gamma-\underline{\gamma})t/8},\quad t\geq0.
    \end{align}
\end{proposition}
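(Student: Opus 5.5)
The plan is to obtain Proposition~\ref{prop:exp-convergence-rate-rbm} as a direct specialization of Theorem~4.1 of \citet{Sarantsev2016Explicit}, with the three hypotheses supplied by Lemma~\ref{lemma:rbm-ergodicity}. (The ``$=$'' in \eqref{eq:ergodicity-convergence} should be read as ``$\leq$'', which is all that theorem gives and all that is needed later.) That theorem says the following. Suppose a Markov process on $[r,\infty)$ is stochastically ordered and has a nondecreasing modified Lyapunov function $V\geq 1$ with constant $k$, defined up to the hitting time of the left endpoint. Suppose also that its stationary distribution $\pi$ satisfies $\pi(V)<\infty$. Then
\[
\|\mathbb{P}_t(x,\cdot)-\pi\|_V\leq\bigl(V(x)+\pi(V)\bigr)e^{-kt}.
\]

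The first step is to put $Z_r$ into this framework. It is a time-homogeneous Feller Markov process on $[r,\infty)$, its unique stationary law is $\pi_r$ by \eqref{eq:stationary-Zr}, and $V_r(z)=e^{\underline{\gamma}(z-r)/2}\geq 1$ is nondecreasing. Lemma~\ref{lemma:rbm-ergodicity}(i) gives the ordering, and (ii) gives the modified Lyapunov property with $k=\sigma^2\underline{\gamma}(2\gamma-\underline{\gamma})/8$. As a check, with $\beta=\underline{\gamma}/2$,
\[
\theta\beta+\tfrac{\sigma^2}{2}\beta^2=\tfrac{\sigma^2\beta}{2}(\beta-\gamma)=-\tfrac{\sigma^2\underline{\gamma}(2\gamma-\underline{\gamma})}{8},
\]
using $\theta=-\sigma^2\gamma/2$. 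This quantity is negative because $\gamma\geq\underline{\gamma}$. Lemma~\ref{lemma:rbm-ergodicity}(iii) gives $\pi_r(V_r)=\gamma/(\gamma-\underline{\gamma}/2)$. Substituting $V_r(x)=e^{\underline{\gamma}(x-r)/2}$ and this $\pi_r(V_r)$ into the theorem yields exactly the stated prefactor and rate.

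The second step is to sketch the coupling argument behind the cited theorem, so that nothing is hidden. Take the ordered copies $Z_r^x\leq Z_r^y$ and let $\tau$ be the hitting time of $r$ by the upper copy. By the ordering, the lower copy is also at $r$ at time $\tau$, so the two processes coalesce from $\tau$ onwards. The modified supermartingale, with optional stopping, gives $\mathbb{E}[e^{k\tau}]\leq V(y)$. For any $|f|\leq V$,
\[
|\mathbb{E}f(Z^x_t)-\mathbb{E}f(Z^y_t)|\leq\mathbb{E}\bigl[(V(Z^x_t)+V(Z^y_t))\mathbf{1}_{\{\tau>t\}}\bigr].
\]
This is controlled by $V(y)e^{-kt}$, using monotonicity of $V$ and the supermartingale property once more. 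Finally, integrate the starting point against $\pi_r$, ordering $x$ and $y$ in each case, to obtain the bound with $V(x)+\pi(V)$.

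The main obstacle is this weighted estimate on $\{\tau>t\}$. Bounding $V$ on that event requires the supermartingale to control $V(Z(t\wedge\tau))e^{k(t\wedge\tau)}$, not merely the integral form used in the definition of a modified Lyapunov function. I would first derive the exponential form via Itô's formula. Before $\tau$ there is no reflection, so $\mathrm{d}(e^{kt}V_r(Z))=e^{kt}(k+\theta\beta+\sigma^2\beta^2/2)V_r\,\mathrm{d}t+\text{martingale}$, and the drift is $\leq 0$. Since the paper allows citing \citet{Sarantsev2016Explicit}, I will invoke its Theorem~4.1 rather than redo this step, and record only the verification of its hypotheses.
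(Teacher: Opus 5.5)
Your proposal takes the same approach as the paper, which proves this by plugging Lemma~\ref{lemma:rbm-ergodicity}$(i)$--$(iii)$ into Theorem~4.1 of \citet{Sarantsev2016Explicit}. Your reading of ``$=$'' as ``$\leq$'' is also right; the paper itself uses the estimate as an inequality in the proof of Lemma~\ref{lemma:exp-moment-bound-Z0}. One caveat on your optional coupling sketch: the pairwise bound cannot be $V(x\vee y)e^{-kt}$, because already at $t=0$ one has $\|\delta_x-\delta_y\|_{V}=V(x)+V(y)$. Synchronous coupling actually yields $2V(x\vee y)e^{-kt}$, so the sketch alone recovers the estimate only up to a factor $2$, which is harmless since your proof rests on the citation.
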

Notably, the exponential convergence rate is determined by the Lyapunov constant, which depends only on $\gamma,\underline{\gamma},\sigma^2$. Thus, this rate is uniform for all $r\leq0$, whereas the prefactor (i.e., the front multiplicative coefficient) depends on $r$ through the initial displacement $x-r$.
This uniformity is critical to the analysis of our algorithms: although the switching policies and states in the algorithms are stochastic and history-dependent, the exponential convergence rate uniform in $r$ can be applied in each fixed-policy interval to control the transient regret. What remains for us is to address the initial displacement arising from each updates in the algorithms to determine the value of the prefactor.

\subsection{Transient Regret Bound for LTO Algorithm}\label{sec:4.2}
In this subsection, we give the transient regret bound for $R_{1}^{\mathrm{LTO}}$, $R_{3}^{\mathrm{LTO}}$ and $R_5^{\mathrm{LTO}}$ in Algorithm~\ref{alg:1}. 

As will be shown later, the solution of optimality equation lies in $(-K_h,0)$ for some constant $K_h>0$ under Assumption~\ref{ass:1} (see Lemma~\ref{lemma:exist-unique-finite-solution}). Now, we first show that there exists a constant $C_h$ such that $h(x)\leq C_h V_r(x),x\geq -K_h$ for each $r\in[-K_h,0]$.
By Assumption~\ref{ass:1} $(iii)$, there exists a constant $a>0$ such that $h(x)\leq ae^{bx}<ae^{\underline{\gamma}x/2}$ for $x\geq0$. Since $h(x)$ is continuous on $[-K_h,0]$, there exists a constant $H_{K}>0$ such that $h(x)\leq H_K$ for $-K_h\leq x\leq0$. Thus there exists a constant $C_h=\max\{a,H_K\}>0$ such that $h(x)\leq C_hV_r(x)$ for each $r\in[-K_h,0]$.

At the switching point $\tau$ in LTO algorithm, the initial state $X=Z^{\mathrm{LTO}}(\tau)=Z_0(\tau)$ is a random variable. To bound the transient regret, we also need to establish the exponential moment bound for $Z_0$ and use the strong Markov property of RBM. The proofs of these two lemmas are provided in Appendix~\ref{app:A.2}.
\begin{lemma}\label{lemma:exp-moment-bound-Z0}
For $x\geq 0$,
    \begin{align}\label{eq:exponential-moment-bound}
        \sup_{t\geq0}\mathbb{E}_x\left[e^{\underline{\gamma} Z_0(t)/2}\right]\leq 4+e^{\underline{\gamma}x/2}.
    \end{align}
\end{lemma}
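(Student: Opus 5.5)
We need $\sup_t \mathbb{E}_x[e^{\beta Z_0(t)}]\le 4+e^{\beta x}$ with $\beta=\underline{\gamma}/2$. The plan is to combine the explicit Skorokhod representation \eqref{eq:Skorokhod-mapping-Z} with $r=0$ and a time-reversal argument. By \eqref{eq:Skorokhod-mapping-Z},
\[
Z_0(t)=\max\Bigl\{x+W(t),\ \sup_{0\le u\le t}\bigl(W(t)-W(u)\bigr)\Bigr\},\qquad W(t):=\theta t+\sigma B(t).
\]
Hence
\[
e^{\beta Z_0(t)}\le e^{\beta(x+W(t))}+e^{\beta \sup_{0\le u\le t}(W(t)-W(u))}.
\]

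For the first term, $\mathbb{E}[e^{\beta W(t)}]=\exp\bigl((\theta\beta+\sigma^2\beta^2/2)t\bigr)$. The exponent equals $\frac{\sigma^2}{2}\beta(\beta-\gamma)t\le 0$, since $\beta=\underline{\gamma}/2<\gamma$. So this term is at most $e^{\beta x}$ uniformly in $t$.

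For the second term, reverse time: $\{W(t)-W(t-s)\}_{0\le s\le t}$ has the same law as $\{W(s)\}_{0\le s\le t}$. Therefore $\sup_{u\le t}(W(t)-W(u))$ is equal in law to $\sup_{s\le t}W(s)\le M:=\sup_{s\ge0}W(s)$. It is classical that $M\sim\mathrm{Exp}(\gamma)$ with $\gamma=-2\theta/\sigma^2$. This also follows from the stationary law \eqref{eq:stationary-Zr} of $Z_0$, since $Z_0$ started at $0$ increases stochastically to $\pi_0$. Consequently
\[
\mathbb{E}[e^{\beta M}]=\frac{\gamma}{\gamma-\beta}=\frac{\gamma}{\gamma-\underline{\gamma}/2}\le\frac{\gamma}{\gamma/2}=2\le 4,
\]
using $\underline{\gamma}\le\gamma$. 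Adding the two bounds gives the claim, in fact with the constant $2$ in place of $4$.

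The only delicate step is justifying the time-reversal identity and the law of the all-time supremum. Both are standard facts about Brownian motion with drift (Section 6.2 of \citet{chen2001fundamentals}). Alternatively, one can avoid time reversal: apply Itô's formula to $e^{\beta Z_0}$ with the Neumann correction $\beta e^{0}\,\mathrm{d}Y_0$ and derive the differential inequality
\[
\frac{\mathrm{d}}{\mathrm{d}t}\mathbb{E}[e^{\beta Z_0}]\le -c\,\mathbb{E}[e^{\beta Z_0}]+\text{const}.
\]
Bounding the expected local-time increment is the messier part of that route, so I would present the pathwise argument above.
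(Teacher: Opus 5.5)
Your argument is correct, and it takes a genuinely different route from the paper's. The paper never looks at the sample path. It takes the $V_0$-norm convergence bound of Proposition~\ref{prop:exp-convergence-rate-rbm}, which is imported from Theorem 4.1 of \citet{Sarantsev2016Explicit} after checking stochastic ordering and the modified Lyapunov property in Lemma~\ref{lemma:rbm-ergodicity}. It then writes $\mathbb{E}_x[V_0(Z_0(t))]=\pi_0(V_0)+\int V_0\,\mathrm{d}(\mathbb{P}^{(0)}_t(x,\cdot)-\pi_0)$ and bounds this by $2+(2+e^{\underline{\gamma}x/2})e^{-\sigma^2\underline{\gamma}(2\gamma-\underline{\gamma})t/8}\le 4+e^{\underline{\gamma}x/2}$.

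You instead work pathwise from the explicit one-dimensional Skorokhod formula $Z_0(t)=\max\{x+W(t),\sup_{u\le t}(W(t)-W(u))\}$. You control the first piece with the exponential-moment computation $\theta\beta+\sigma^2\beta^2/2=\tfrac{\sigma^2}{2}\beta(\beta-\gamma)\le 0$. You control the second by time reversal together with the $\mathrm{Exp}(\gamma)$ law of the all-time maximum of a Brownian motion with negative drift.

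What your route buys:
\begin{itemize}
\item It is elementary and self-contained, with no external ergodic theorem whose hypotheses must be verified.
\item It gives the sharper constant $\gamma/(\gamma-\underline{\gamma}/2)\le 2$ in place of $4$. The paper's route loses a factor because $\pi_0(V_0)$ is paid twice: once as the stationary mean, and once inside the prefactor of the $V$-norm bound.
\end{itemize}

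What the paper's route buys:
\begin{itemize}
\item Proposition~\ref{prop:exp-convergence-rate-rbm} is needed anyway for the transient regret, so there the lemma is a two-line corollary.
\item It additionally shows that $\mathbb{E}_x[e^{\underline{\gamma}Z_0(t)/2}]$ converges to $\pi_0(V_0)$ exponentially fast.
\item It relies only on a Lyapunov drift structure, not on a closed-form reflection map and Brownian time reversal. It would therefore extend to reflected diffusions where your pathwise identities are not available.
\end{itemize}
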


\begin{lemma}\label{lemma:Markov-property-LTO}
$X,\hat{r}_{\tau}$ are $\mathcal{F}_{\tau}$-measurable random variables.
Moreover,
\[
\mathbb E_x\left[\int_{\tau}^{T}h\left(Z^{\tau,X}_{\hat{r}_{\tau}}(t)\right)\mathrm{d}t\Bigg|\mathcal F_{\tau}\right]=\int_{0}^{T-\tau}\mathbb{E}_X^{(\hat{r}_{\tau})}\left[h\left(Z_{\hat{r}_{\tau}}^{0,X}(s)\right)\right]\mathrm{d}s\quad a.s.,
\]
where $\mathbb{E}_X^{(\hat{r}_{\tau})}$ is the expectation conditioning on the random initial state $X$ and random reflection level $\hat{r}_{\tau}$.
\end{lemma}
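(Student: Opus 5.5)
The proof has two parts: measurability, then a strong Markov argument based on the pathwise construction of the solution of \eqref{eq:SDE-startfrom-s}.

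\textbf{Measurability.} On $[0,\tau]$ we have $Z^{\mathrm{LTO}}=Z_0$. This is a continuous adapted process given by the Skorokhod map \eqref{eq:Skorokhod-mapping-Z}, a continuous functional of $\{B(u):u\le\tau\}$. Hence $X=Z_0(\tau)$ is $\mathcal{F}_\tau$-measurable. The integral $\int_0^\tau Z_0(t)\,\mathrm{d}t$ is $\mathcal{F}_\tau$-measurable as the integral of a jointly measurable adapted process. So $\hat\gamma$ in \eqref{eq:hat-gamma} is $\mathcal{F}_\tau$-measurable, being a continuous function of this integral followed by $\vee\underline{\gamma}$.

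Next we need $u\mapsto r(u)$ to be Borel, so that $\hat r_\tau=r(\hat\gamma)$ is $\mathcal{F}_\tau$-measurable. I would derive this from continuity of $r(\cdot)$. Take the unique root of $F(u,\cdot)=0$ on $(-K_h,0)$ from Lemma~\ref{lemma:exist-unique-finite-solution}. $F$ is jointly continuous (dominated convergence, using Assumption~\ref{ass:1}(iii) with $u\ge\underline\gamma>2b$). A limit point of $r(u_n)$ is therefore a root, and by uniqueness it equals $r(u)$. If continuity is not yet available, the fallback is the set identity $\{r(\hat\gamma)\le c\}=\{F(\hat\gamma,c)\ge0\}$. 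This holds because $F(u,\cdot)$ has a sign change at the root: $\partial_r C=u F$, and $C(u,\cdot)$ is unimodal. The right-hand side is measurable since $F(\cdot,c)$ is continuous.

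\textbf{Conditional identity.} Since $\tau$ is deterministic, write $W(s):=B(\tau+s)-B(\tau)$. This is a standard Brownian motion independent of $\mathcal{F}_\tau$. By the explicit Skorokhod representation with starting time $\tau$,
\[
Z^{\tau,y}_{r}(\tau+s)=\Phi(y,r,W)(s):=y+\theta s+\sigma W(s)+\sup_{0\le v\le s}\bigl(r-y-\theta v-\sigma W(v)\bigr)^+ .
\]
This holds for $y\ge r$ by pathwise uniqueness of \eqref{eq:SDE-startfrom-s}. The map $\Phi$ is jointly measurable in $(y,r,w)$, indeed continuous. Using pathwise uniqueness, I would verify that plugging in the random $\mathcal{F}_\tau$-measurable pair $(X,\hat r_\tau)$ gives exactly the process $Z^{\tau,X}_{\hat r_\tau}$ used in the construction. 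The substitution is legitimate because uniqueness is pathwise and $\Phi$ gives an $\omega$-by-$\omega$ formula.

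Apply Tonelli, justified since $h\ge0$, and change variables $t=\tau+s$. Then use the freezing lemma for conditional expectation: $X,\hat r_\tau$ are $\mathcal{F}_\tau$-measurable, $W$ is independent of $\mathcal{F}_\tau$, and $h\circ\Phi(\cdot,\cdot,W)(s)$ is a nonnegative measurable function. This gives
\[
\mathbb{E}_x\Bigl[\int_0^{T-\tau}h(\Phi(X,\hat r_\tau,W)(s))\,\mathrm{d}s\Bigm|\mathcal{F}_\tau\Bigr]=g(X,\hat r_\tau),
\]
where $g(y,r):=\int_0^{T-\tau}\mathbb{E}[h(\Phi(y,r,W)(s))]\,\mathrm{d}s$. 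By definition, $g(y,r)$ equals $\int_0^{T-\tau}\mathbb{E}^{(r)}_y[h(Z^{0,y}_r(s))]\,\mathrm{d}s$, which is the claimed identity. Extended-valued nonnegative integrands are fine in Tonelli and the freezing lemma, so no integrability is needed at this stage.

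\textbf{Main obstacle.} The delicate step is the measurability of $\hat r_\tau$, which requires continuity or monotone structure of the root map $r(\cdot)$. The secondary subtlety is justifying that the process defined with random $(X,\hat r_\tau)$ coincides pathwise with $\Phi(X,\hat r_\tau,W)$. I would handle that through the explicit Skorokhod formula rather than a general strong Markov theorem.
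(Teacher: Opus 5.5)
Your proposal is correct and follows essentially the same route as the paper: because $\tau$ is deterministic, $X$ and $\hat r_\tau$ are $\mathcal{F}_\tau$-measurable; the shifted Brownian motion $B(\tau+\cdot)-B(\tau)$ is independent of $\mathcal{F}_\tau$; so conditionally on $\mathcal{F}_\tau$ the post-$\tau$ process is an RBM at level $\hat r_\tau$ started from $X$, and one then integrates in $s$. Your version is more careful than the paper's, which takes Borel measurability of $u\mapsto r(u)$ for granted and uses the informal statement that ``conditional on $\mathcal{F}_\tau$ the pair is fixed'' where you use the explicit Skorokhod formula and the freezing lemma.
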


\begin{proposition}[Transient regret bounds in LTO algorithm]\label{prop:transient-regret-bound-LTO}
For $x\geq 0$, we have
\begin{subequations}
\begin{align}
R_{1}^{\mathrm{LTO}}&\leq\frac{8C_h\left(2+e^{\underline{\gamma}x/2}\right)}{\sigma^2\underline{\gamma}(2\gamma-\underline{\gamma})};\label{eq:transient-regret-bound-LTO-1}\\
R_{3}^{\mathrm{LTO}}&\leq\frac{8C_h\left(2+e^{\underline{\gamma}K_h/2}\left(4+e^{\underline{\gamma}x/2}\right)\right)}{\sigma^2\underline{\gamma}(2\gamma-\underline{\gamma})};\label{eq:transient-regret-bound-LTO-2}\\
R_{5}^{\mathrm{LTO}}&\leq \frac{8C_h\left(2+e^{\underline{\gamma}(x+K_h)/2}\right)}{\sigma^2\underline{\gamma}(2\gamma-\underline{\gamma})}\label{eq:transient-regret-bound-LTO-3}.
\end{align}
\end{subequations}
\end{proposition}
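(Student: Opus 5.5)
The plan is to use \eqref{eq:transient-rewrite} together with Proposition~\ref{prop:exp-convergence-rate-rbm}, applied to $f=h/C_h$. For every $r\in[-K_h,0]$ we have $\lvert h\rvert\le C_h V_r$ on $[r,\infty)$, so the definition of the $V$-norm gives
\[
\left\lvert \mathbb{P}_t^{(r)}h(x)-\pi_r(h)\right\rvert\le C_h\left\|\mathbb{P}_t^{(r)}(x,\cdot)-\pi_r\right\|_{V_r}.
\]
Set $\kappa:=\sigma^2\underline{\gamma}(2\gamma-\underline{\gamma})/8$. Since $\gamma\ge\underline{\gamma}$, we have $\gamma/(\gamma-\underline{\gamma}/2)\le 2$, so the prefactor in \eqref{eq:ergodicity-convergence} is at most $2+e^{\underline{\gamma}(x-r)/2}$. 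Integrating over $t\in[0,\infty)$ then gives the uniform bound
\begin{equation*}
\int_0^\infty\left\lvert \mathbb{P}_t^{(r)}h(x)-\pi_r(h)\right\rvert\mathrm{d}t\le \frac{8C_h\left(2+e^{\underline{\gamma}(x-r)/2}\right)}{\sigma^2\underline{\gamma}(2\gamma-\underline{\gamma})},\qquad r\in[-K_h,0],\ x\ge r.
\end{equation*}
This single estimate is the engine for all three bounds.

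For $R_1^{\mathrm{LTO}}$, take $r=0$ and the deterministic start $x\ge0$. This yields \eqref{eq:transient-regret-bound-LTO-1} directly, after extending $\int_0^\tau$ to $\int_0^\infty$. For $R_5^{\mathrm{LTO}}$, the term is $\mathbb{E}_x\int_0^T(C(\gamma,r^*)-h(Z_{r^*}(t)))\mathrm{d}t$, which is bounded by the same absolute integral with $r=r^*\in(-K_h,0)$ (Lemma~\ref{lemma:exist-unique-finite-solution}). Since $x-r^*\le x+K_h$, this gives \eqref{eq:transient-regret-bound-LTO-3}. One caveat: if $x<r^*$ the process starts with an instantaneous jump to $r^*$, but we assume $x\ge0>r^*$, so this does not arise.

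For $R_3^{\mathrm{LTO}}$, condition on $\mathcal{F}_\tau$ and use Lemma~\ref{lemma:Markov-property-LTO}. The conditional transient regret on $[\tau,T)$ equals
\[
\int_0^{T-\tau}\left(\mathbb{P}_s^{(\hat r_\tau)}h(X)-\pi_{\hat r_\tau}(h)\right)\mathrm{d}s,
\]
because $C(\gamma,\hat r_\tau)=\pi_{\hat r_\tau}(h)$ is $\mathcal{F}_\tau$-measurable. Since $\hat r_\tau=r(\hat\gamma)\in(-K_h,0)$ for every value of $\hat\gamma\ge\underline{\gamma}$, the deterministic bound applies pointwise with $x=X$ and $r=\hat r_\tau$. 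This is the one point needing care: $K_h$ must be uniform over $u\ge\underline{\gamma}$, which I take to be part of what Lemma~\ref{lemma:exist-unique-finite-solution} provides.

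Using $e^{\underline{\gamma}(X-\hat r_\tau)/2}\le e^{\underline{\gamma}K_h/2}e^{\underline{\gamma}X/2}$ and taking expectations, Lemma~\ref{lemma:exp-moment-bound-Z0} with $X=Z_0(\tau)$ gives $\mathbb{E}_x e^{\underline{\gamma}X/2}\le 4+e^{\underline{\gamma}x/2}$. This produces \eqref{eq:transient-regret-bound-LTO-2}. The measurability of the integrand in $(X,\hat r_\tau)$, needed to justify the pointwise substitution, follows from the joint measurability of the transition kernel in the initial point and the level, which I would treat as routine.
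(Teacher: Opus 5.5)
Your proposal is correct and follows the paper's proof. The paper likewise bounds $h\le C_hV_r$, applies the $V$-norm convergence of Proposition~\ref{prop:exp-convergence-rate-rbm} using $\gamma/(\gamma-\underline{\gamma}/2)\le 2$, integrates in time, and for $R_3^{\mathrm{LTO}}$ conditions on $\mathcal{F}_\tau$ via Lemma~\ref{lemma:Markov-property-LTO}, uses $\hat r_\tau\in(-K_h,0)$, and finishes with Lemma~\ref{lemma:exp-moment-bound-Z0}. Packaging the three cases as one estimate uniform over $r\in[-K_h,0]$ is only a presentational difference from the paper's ``repeat part $(i)$ with $r$ replaced'' argument.
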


\begin{proof}
    Proof of \eqref{eq:transient-regret-bound-LTO-1}: Since $h(x)\leq C_hV_0(x)$, we have
    \begin{align*}
        \left\lvert \mathbb{E}_x[h(Z_0(t))]-C(\gamma,0)\right\rvert&=\left\lvert \int_{[0,\infty)} h(z)\left(\mathbb{P}_t^{(0)}(x,\mathrm{d}z)-\pi_0(\mathrm{d}z)\right)\right\rvert\\
        &\leq C_h \left\|\mathbb{P}_t^{(0)}(x,\cdot)-\pi_0\right\|_{V_0}\\
        &\leq C_h\left[\frac{\gamma}{\gamma-\underline{\gamma}/2}+e^{\underline{\gamma}x/2}\right]e^{-\sigma^2\underline{\gamma}(2\gamma-\underline{\gamma})t/8}\\
        &\leq C_h\left[2+e^{\underline{\gamma}x/2}\right]e^{-\sigma^2\underline{\gamma}(2\gamma-\underline{\gamma})t/8},
    \end{align*}
    where the first inequality follows from the definition of $V$-norm, the second inequality is a consequence of \eqref{eq:ergodicity-convergence} for $r=0$, and the last inequality holds since $\frac{\gamma}{\gamma-\underline{\gamma}/2}\leq 2$. Integrating from $0$ to $\tau$, we obtain
    \begin{align*}
    R_{1}^{\mathrm{LTO}}&=\mathbb{E}_x\left[\int_0^{\tau} \left(h\left(Z_0(t)\right)-C(\gamma,0)\right)\mathrm{d}t \right] \leq \int_0^{\tau} \left\lvert\mathbb{E}_x\left[\left(h\left(Z_0(t)\right)\right]-C(\gamma,0)\right)\right\rvert\mathrm{d}t \\
    &\leq \int_0^{\tau}C_h\left[2+e^{\underline{\gamma}x/2}\right]e^{-\sigma^2\underline{\gamma}(2\gamma-\underline{\gamma})t/8}\mathrm{d}t\\
    &=\frac{8C_h\left(2+e^{\underline{\gamma}x/2}\right)}{\sigma^2\underline{\gamma}(2\gamma-\underline{\gamma})}\left(1-e^{-\sigma^2\underline{\gamma}(2\gamma-\underline{\gamma})\tau/8}\right)\leq\frac{8C_h\left(2+e^{\underline{\gamma}x/2}\right)}{\sigma^2\underline{\gamma}(2\gamma-\underline{\gamma})}.
    \end{align*}

    Proof of \eqref{eq:transient-regret-bound-LTO-2}: By the tower property of conditional expectation and Lemma~\ref{lemma:Markov-property-LTO}, we have
    \begin{align*}
    R_{3}^{\mathrm{LTO}}&=\mathbb{E}_x\left[\int_{\tau}^T \left(h\left(Z_{\hat{r}_{\tau}}^{\tau, X}(t)\right)-C(\gamma,\hat{r}_{\tau})\right)\mathrm{d}t \right]\\
    &=\mathbb{E}_x\left[\mathbb{E}_x\left[\int_{\tau}^T \left(h\left(Z_{\hat{r}_{\tau}}^{\tau, X}(t)\right)-C(\gamma,\hat{r}_{\tau})\right)\mathrm{d}t\bigg|\mathcal{F}_{\tau} \right]\right]\\
    &=\mathbb{E}_x\left[\int_{0}^{T-\tau}\left(\mathbb{E}_X^{(\hat{r}_{\tau})}\left[h\left(Z_{\hat{r}_{\tau}}^{0,X}(s)\right)\right]-C(\gamma,\hat{r}_{\tau})\right)\mathrm{d}s \right].
    \end{align*}
    The above argument in $(i)$, replacing $0$ and $x$ with $\hat{r}_{\tau}$ and $X$, implies
    \begin{align*}
    R_{3}^{\mathrm{LTO}}&\leq\mathbb{E}_x\left[\int_0^{T-\tau} C_h\left[2+e^{\underline{\gamma}(X+K_h)/2}\right]e^{-\sigma^2\underline{\gamma}(2\gamma-\underline{\gamma})t/8}\mathrm{d}t\right]\\
    &=\frac{8C_h\left(2+e^{\underline{\gamma}K_h/2}\mathbb{E}_x\left[e^{\underline{\gamma}Z_0(\tau)/2}\right]\right)}{\sigma^2\underline{\gamma}(2\gamma-\underline{\gamma})}\left(1-e^{-\sigma^2\underline{\gamma}(2\gamma-\underline{\gamma})(T-\tau)/8}\right).
    \end{align*}
    Then by Lemma~\ref{lemma:exp-moment-bound-Z0}, we have
    \begin{align*}
    R_{3}^{\mathrm{LTO}}&\leq\frac{8C_h\left(2+e^{\underline{\gamma}K_h/2}\left(4+e^{\underline{\gamma}x/2}\right)\right)}{\sigma^2\underline{\gamma}(2\gamma-\underline{\gamma})}\left(1-e^{-\sigma^2\underline{\gamma}(2\gamma-\underline{\gamma})(T-\tau)/8}\right),
    \end{align*}
    which implies \eqref{eq:transient-regret-bound-LTO-2}.

    Proof of \eqref{eq:transient-regret-bound-LTO-3}: The proof follows similarly to part $(i)$: we only need to replace $0$ with $r^*$ (noting that $r^*\in[-K_h,0]$) and adjust the integral to $[0,T]$. We have
    \begin{align*}
        R_{5}^{\mathrm{LTO}}&=\mathbb{E}_x\left[\int_0^T \left(C(\gamma,r^*)-h\left(Z_{r^*}(t)\right)\right)\mathrm{d}t \right]\leq \int_0^T \left\lvert\mathbb{E}_x\left[\left(h\left(Z_{r^*}(t)\right)\right]-C(\gamma,r^*)\right)\right\rvert\mathrm{d}t \\
        &\leq \int_0^TC_h\left[2+e^{\underline{\gamma}(x+K_h)/2}\right]e^{-\sigma^2\underline{\gamma}(2\gamma-\underline{\gamma})t/8}\mathrm{d}t =\frac{8C_h\left(2+e^{\underline{\gamma}(x+K_h)/2}\right)}{\sigma^2\underline{\gamma}(2\gamma-\underline{\gamma})}\left(1-e^{-\sigma^2\underline{\gamma}(2\gamma-\underline{\gamma})T/8}\right),
    \end{align*}    
    which implies \eqref{eq:transient-regret-bound-LTO-3}.
\end{proof}

\subsection{Moving Boundary Skorokhod Mapping}\label{sec:4.3}
From the above transient regret analysis for LTO algorithm, the random initial state at time $\tau$ is exactly $Z_0(\tau)$, thus we can apply Lemma~\ref{lemma:exp-moment-bound-Z0} to obtain the exponential moment bound. However, the AU algorithm involves random switching states, which requires additional handling. In the following, we leverage the monotonicity of moving boundary Skorokhod mapping to bound the sample path of the AU algorithm from above by $Z_0$.

An important observation is that the sample path construction in \eqref{eq:sample-path-AU} and \eqref{eq:poststate} can be viewed as a moving boundary Skorokhod mapping. Equivalently, defining the piecewise-constant boundary
\[
\hat{r}(t):=\sum_{k=0}^{\infty} \hat{r}_k\mathbf{1}_{[\tau_k,\tau_{k+1})}(t),
\]
the AU sample path $\{Z^{\mathrm{AU}}:t\geq0\}$ and the corresponding control process $\{Y^{\mathrm{AU}}(t):t\geq0\}$ admit the Skorokhod representation $Z_{\hat{r}}(t) = x + \theta t + \sigma B(t) + Y_{\hat{r}}(t)$, where $Y_{\hat{r}}$ is the minimal nondecreasing càdlàg (right-continuous with left limits) process such that $Z_{\hat{r}}(t) \geq \hat{r}(t),t \geq 0$ and is given explicitly by $Y_{\hat{r}}(t) = \sup_{0 \leq s \leq t} \left(\hat r(s) - x - \theta s - \sigma B(s)\right)^+$.
The following lemma shows the monotonicity of the moving boundary Skorokhod mapping. Since the reflection control we consider all have reflecting level less than or equal to zero, we use $(Y_i,Z_i)$ for $i=1,2$ here to denote the new definitions.
\begin{lemma}\label{lemma:moving-boundary-Skorokhod}
    Let $b_1,b_2:[0,\infty)\to\mathbb{R}$ be càdlàg functions with $b_1(t)\leq b_2(t),t\geq0$.
    For $x\geq b_2(0)$ and $i=1,2$, define
    \[
    Y_i(t):=\sup_{0\leq s\leq t}\left(b_i(s)-x-\theta s-\sigma B(s)\right)^+,\quad Z_i(t):=x+\theta t+\sigma B(t)+Y_i(t).
    \]
    Then for all $t\geq0$,
    \[
    Y_1(t)\leq Y_2(t),\quad Z_1(t)\leq Z_2(t)\quad \text{a.s.}
    \]
\end{lemma}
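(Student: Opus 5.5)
The proof is a pathwise argument: fix an $\omega$ in the full-measure set where $B(\cdot,\omega)$ is continuous, and compare the two explicit formulas directly. Write $X(s):=x+\theta s+\sigma B(s)$, which is a deterministic continuous function once $\omega$ is fixed. Since $b_1(s)\le b_2(s)$ for every $s\ge0$, we have $b_1(s)-X(s)\le b_2(s)-X(s)$ pointwise. The map $y\mapsto y^+$ is nondecreasing, so
\[
\left(b_1(s)-X(s)\right)^+\le \left(b_2(s)-X(s)\right)^+,\quad s\ge0 .
\]

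Next, take the supremum over $s\in[0,t]$ on both sides. The supremum over a common index set preserves pointwise inequalities, which gives $Y_1(t)\le Y_2(t)$. Both suprema are finite: $b_i$ is càdlàg, hence bounded on the compact interval $[0,t]$, and $X$ is continuous, so $Y_i(t)<\infty$ and the comparison is between real numbers.

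Adding $X(t)$ to both sides then yields
\[
Z_1(t)=X(t)+Y_1(t)\le X(t)+Y_2(t)=Z_2(t).
\]
This holds for every $t\ge0$ simultaneously on the full-measure event, so in particular it holds almost surely, as stated.

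The assumption $x\ge b_2(0)$ is not needed for the inequality itself. It only guarantees that $Y_i(0)=(b_i(0)-x)^+=0$, so the $Y_i$ are genuine Skorokhod regulators starting at zero. I will remark on this but not use it.

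There is no real obstacle here. The only point requiring care is that the claim is stated for the explicit sup formulas, not for arbitrary solutions of a moving-boundary Skorokhod problem. I therefore will not need any uniqueness or minimality argument: the monotonicity follows directly from the formula, one $\omega$ at a time.
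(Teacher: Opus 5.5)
Your proposal is correct and matches the paper's proof step for step. Both compare the positive parts $\left(b_i(s)-x-\theta s-\sigma B(s)\right)^+$ pointwise, take the supremum over $s\in[0,t]$ to get $Y_1(t)\le Y_2(t)$, and then add $x+\theta t+\sigma B(t)$ to obtain $Z_1(t)\le Z_2(t)$. Your additional remarks are accurate refinements that the paper leaves implicit: the suprema are finite because c\`adl\`ag functions are bounded on compact intervals, and $x\ge b_2(0)$ only ensures $Y_i(0)=0$.
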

Using Lemma~\ref{lemma:moving-boundary-Skorokhod}, we directly have the following corollary, which bounds the random states in AU algorithm by $Z_0(\tau_k)$.

\begin{corollary}\label{Cor:1}
    For $x\geq0$, we have $Z_{\hat{r}}(t)\leq Z_0(t),t\geq0$ almost surely.
    Consequently, for $k\geq 1$, the post-update state defined in \eqref{eq:poststate} satisfies
    \[
    X_k\leq Z_0(\tau_k),\quad \text{a.s.}
    \]
\end{corollary}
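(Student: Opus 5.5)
The plan is to apply Lemma~\ref{lemma:moving-boundary-Skorokhod} with $b_1:=\hat r(\cdot)$ and $b_2:\equiv 0$. Both are càdlàg: $\hat r(t)$ is piecewise constant on the intervals $[\tau_k,\tau_{k+1})$ and right-continuous at each $\tau_k$. The first thing to check is the ordering $b_1\le b_2$. We have $\hat r_0=0$, and for each $k\ge1$ the level $\hat r_k=r(\hat\gamma_k)$ is the unique negative solution of $F(\hat\gamma_k,r)=0$ (Lemma~\ref{lemma:exist-unique-finite-solution}, using $\hat\gamma_k\ge\underline\gamma$). Hence $\hat r(t)\le 0$ for all $t$. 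The initial condition $x\ge b_2(0)=0$ holds by assumption.

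The lemma then yields $Z_1(t)\le Z_2(t)$ for all $t$, almost surely, where $Z_1$ and $Z_2$ are built from the same Brownian path. By \eqref{eq:Skorokhod-mapping-Z}, $Z_2$ is exactly $Z_0$. It remains to identify $Z_1$ with $Z_{\hat r}=Z^{\mathrm{AU}}$. The paper asserts this representation just before the lemma. To make it solid, I would verify it by induction over the intervals $I_k$.

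On $I_0$ the two processes coincide trivially. Suppose $Z_{\hat r}(t)=Z^{\mathrm{AU}}(t)$ on $[0,\tau_k)$. At $\tau_k$ the supremum formula gives
\[
Y_{\hat r}(\tau_k)=Y_{\hat r}(\tau_k-)\vee\bigl(\hat r_k-x-\theta\tau_k-\sigma B(\tau_k)\bigr).
\]
This yields $Z_{\hat r}(\tau_k)=Z^{\mathrm{AU}}(\tau_k-)\vee\hat r_k=X_k$, matching \eqref{eq:poststate}. On $(\tau_k,\tau_{k+1})$ the boundary is constant. The increments of $Y_{\hat r}$ therefore follow the one-sided Skorokhod map started from $X_k$ with level $\hat r_k$, which coincides with $Z^{\tau_k,X_k}_{\hat r_k}$ by uniqueness of the fixed-level Skorokhod problem. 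This pathwise identification is the only step needing care; the rest is direct. It holds for every $\omega$, even though $\hat r$ is random, because the lemma is a deterministic pathwise statement and can be applied $\omega$ by $\omega$.

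For the consequence, take $t=\tau_k$, $k\ge1$. Then $X_k=Z^{\mathrm{AU}}(\tau_k)=Z_{\hat r}(\tau_k)\le Z_0(\tau_k)$ almost surely. Alternatively, one can argue directly: $Z^{\mathrm{AU}}(\tau_k-)\le Z_0(\tau_k-)=Z_0(\tau_k)$ by continuity of $Z_0$, and $\hat r_k<0\le Z_0(\tau_k)$, so the maximum in \eqref{eq:poststate} is also bounded by $Z_0(\tau_k)$.
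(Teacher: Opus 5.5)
Your proposal is correct and follows the paper's proof: apply Lemma~\ref{lemma:moving-boundary-Skorokhod} $\omega$ by $\omega$ with $b_1=\hat r(\cdot)\le 0$ and $b_2\equiv 0$, then evaluate the resulting comparison at $t=\tau_k$. Your inductive check that the moving-boundary map reproduces the AU construction, in particular $Z_{\hat r}(\tau_k)=Z^{\mathrm{AU}}(\tau_k-)\vee\hat r_k=X_k$, fills in the identification $X_k=Z_{\hat r}(\tau_k)$, which the paper only asserts.
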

The proofs of Lemma~\ref{lemma:moving-boundary-Skorokhod} and Corollary~\ref{Cor:1} are given in Appendix~\ref{app:A.3}.
\begin{remark}\label{remark:AU-FH}
    For $\mathrm{AU}$-$\mathrm{FH}$ algorithm, just as with the $\mathrm{AU}$ algorithm, we have $\tilde{X}_k\leq Z_0(\tau_k)$ a.s.
\end{remark}

\subsection{Transient Regret Bound for AU and AU-FH} Algorithms\label{sec:4.4}
Similar to Lemma~\ref{lemma:Markov-property-LTO}, we have the following lemma.
\begin{lemma}\label{lemma:Markov-property-AU}
$X_k,\hat{r}_k$ are $\mathcal{F}_{\tau_k}$-measurable random variables.
Moreover,
\[
\mathbb E_x\left[\int_{\tau_k}^{\tau_{k+1}}h\left(Z^{\tau_k,X_k}_{\hat{r}_k}(t)\right)\mathrm{d}t\Bigg|\mathcal F_{{\tau}_k}\right]=\int_{0}^{\Delta\tau_k}\mathbb{E}_{X_k}^{(\hat{r}_k)}\left[h\left(Z_{\hat{r}_k}^{0,X_k}(s)\right)\right]\mathrm{d}s\quad a.s.,
\]
where $\mathbb{E}_{X_k}^{(\hat{r}_k)}$ is the expectation conditioning on the random initial state $X_k$ and random reflection level $\hat{r}_k$.
\end{lemma}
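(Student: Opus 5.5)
Both claims will be proved by induction on $k$, following the template of Lemma~\ref{lemma:Markov-property-LTO}. The key device is to write $Z^{\tau_k,X_k}_{\hat r_k}$ as a deterministic functional of three ingredients: the $\mathcal F_{\tau_k}$-measurable pair $(X_k,\hat r_k)$, and the shifted Brownian motion $W_k(s):=B(\tau_k+s)-B(\tau_k)$, $s\ge0$. Since $\tau_k=2^k-1$ is deterministic, $W_k$ is a standard Brownian motion independent of $\mathcal F_{\tau_k}$ by the simple Markov property, and no strong Markov argument is needed.

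For the measurability claim, start with $X_0=x$ and $\hat r_0=0$, which are trivially measurable. Suppose $X_k$ and $\hat r_k$ are $\mathcal F_{\tau_k}$-measurable. By the explicit Skorokhod formula \eqref{eq:Skorokhod-mapping-Z} with starting time $\tau_k$, we have for $t\in[\tau_k,\tau_{k+1})$
\[
Z^{\mathrm{AU}}(t)=X_k+\theta(t-\tau_k)+\sigma W_k(t-\tau_k)+\sup_{0\le u\le t-\tau_k}\bigl(\hat r_k-X_k-\theta u-\sigma W_k(u)\bigr)^+.
\]
This is a jointly measurable function of $(X_k,\hat r_k)$ and the path of $B$ on $[\tau_k,t]$. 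Hence $Z^{\mathrm{AU}}$ is adapted and continuous on $I_k$, and the left limit $Z^{\mathrm{AU}}(\tau_{k+1}-)$ exists and is $\mathcal F_{\tau_{k+1}}$-measurable. The time integral in \eqref{eq:hat-gamma-k} is therefore $\mathcal F_{\tau_{k+1}}$-measurable, and it is a.s.\ positive by the local-time argument of Section~\ref{sec:3.1}. It follows that $\hat\gamma_{k+1}$ is measurable. Next, $\hat r_{k+1}=r(\hat\gamma_{k+1})$ is measurable as long as $u\mapsto r(u)$ is Borel on $[\underline\gamma,\infty)$. For this I would invoke continuity of $r(\cdot)$, which follows from the implicit function theorem applied to $F$ (where $\partial_rF\ne0$ at the root, as the unimodality in Lemma~\ref{lemma:exist-unique-finite-solution} suggests) or from the regularity results in Section~\ref{sec:5.1}. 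As a fallback, it suffices to note that $r(u)=\sup\{r\le0: F(u,r)\le 0\}$ or a similar expression, which is a measurable selection. Finally, $X_{k+1}=Z^{\mathrm{AU}}(\tau_{k+1}-)\vee\hat r_{k+1}$ is measurable, which closes the induction.

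For the conditional expectation identity, define the measurable map
\[
\Phi(y,r,w):=\int_0^{\Delta\tau_k}h\Bigl(y+\theta s+\sigma w(s)+\sup_{u\le s}\bigl(r-y-\theta u-\sigma w(u)\bigr)^+\Bigr)\,\mathrm{d}s,
\]
so that the integral on the left-hand side equals $\Phi(X_k,\hat r_k,W_k)$. Since $h\ge0$, $\Phi$ is nonnegative, so the freezing (independence) lemma for conditional expectations applies without integrability assumptions. It gives $\mathbb E[\Phi(X_k,\hat r_k,W_k)\mid\mathcal F_{\tau_k}]=g(X_k,\hat r_k)$, where $g(y,r):=\mathbb E[\Phi(y,r,B)]$. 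Applying Tonelli to $g$ yields $g(y,r)=\int_0^{\Delta\tau_k}\mathbb E^{(r)}_y[h(Z^{0,y}_r(s))]\,\mathrm d s$, which is exactly the claimed right-hand side evaluated at $(X_k,\hat r_k)$.

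The main obstacle I expect is the measurability of the solution map $u\mapsto r(u)$, together with the joint measurability of $\Phi$ in $(y,r,w)$. The latter follows from continuity of the Skorokhod map in $(y,r,w)$ under the uniform topology on compacts, combined with continuity of $h$. The same argument goes through verbatim for AU-FH: the estimator $\tilde\gamma_{k+1}$ uses integrals over $[0,\tau_{k+1})$, which are still $\mathcal F_{\tau_{k+1}}$-measurable.
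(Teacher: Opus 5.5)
Your proposal is correct and follows the paper's route. The paper proves this lemma by mimicking its proof of Lemma~\ref{lemma:Markov-property-LTO}: $\tau_k$ is deterministic, $(X_k,\hat r_k)$ is computed from the path before $\tau_k$, $B(\tau_k+\cdot)-B(\tau_k)$ is a Brownian motion independent of $\mathcal F_{\tau_k}$, so conditionally on $\mathcal F_{\tau_k}$ the process on $I_k$ is an RBM started at $X_k$ with level $\hat r_k$, and one then integrates in time. Your inductive measurability argument (including Borel measurability of $u\mapsto r(u)$) and the explicit freezing-lemma and Tonelli steps make rigorous what the paper states informally.
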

Combining all the ingredients above, we have the following transient regret bound for AU and AU-FH algorithms.

\begin{proposition}[Transient regret bounds in AU and AU-FH algorithms]\label{prop:transient-regret-bound-AU}
For $x\geq 0$, the following hold.
\begin{subequations}
\begin{align}
\max\left\{R_{1}^{\mathrm{AU}},R_1^{\mathrm{AU\text{-}FH}}\right\}&\leq\frac{8C_h \left(2+e^{\underline{\gamma} K_h/2}\left(4+e^{\underline{\gamma}x/2}\right)\right)}{\sigma^2\underline{\gamma}(2\gamma-\underline{\gamma})}\left(\log_2 T +2\right);\label{eq:transient-regret-bound-AU-1}\\
\max\left\{R_{4}^{\mathrm{AU}}, R_4^{\mathrm{AU\text{-}FH}}\right\}&\leq \frac{8C_h\left(2+e^{\underline{\gamma}(x+K_h)/2}\right)}{\sigma^2\underline{\gamma}(2\gamma-\underline{\gamma})}.\label{eq:transient-regret-bound-AU-2}
\end{align}
\end{subequations}
\end{proposition}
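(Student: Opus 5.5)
The plan is to reduce each interval of the AU (and AU-FH) sample path to the LTO argument for $R_3^{\mathrm{LTO}}$. Then we sum over the $N+1\le \log_2 T+2$ intervals. The bound \eqref{eq:transient-regret-bound-AU-2} needs no new work: $R_4^{\mathrm{AU}}$ and $R_4^{\mathrm{AU\text{-}FH}}$ are literally the same quantity as $R_5^{\mathrm{LTO}}$. They involve only the optimally controlled process $Z_{r^*}$ started from $x$ on $[0,T]$. So \eqref{eq:transient-regret-bound-LTO-3} applies verbatim, using $r^*\in[-K_h,0]$ and Proposition~\ref{prop:exp-convergence-rate-rbm} with $h\le C_hV_{r^*}$.

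For \eqref{eq:transient-regret-bound-AU-1}, write $R_1^{\mathrm{AU}}$ as the sum over $k=0,\dots,N$ of the interval terms. The last interval is truncated at $T$, i.e. has length $T-\tau_N\le\Delta\tau_N$. For each $k$ we condition on $\mathcal F_{\tau_k}$ and apply Lemma~\ref{lemma:Markov-property-AU}. This writes the conditional interval regret as $\int_0^{\Delta\tau_k}\bigl(\mathbb E^{(\hat r_k)}_{X_k}[h(Z^{0,X_k}_{\hat r_k}(s))]-C(\gamma,\hat r_k)\bigr)\mathrm ds$; the truncated last interval is handled the same way. Since $\hat r_k=r(\hat\gamma_k)\in[-K_h,0]$ by Lemma~\ref{lemma:exist-unique-finite-solution} (with $\hat r_0=0$), we have $h\le C_hV_{\hat r_k}$ on $[\hat r_k,\infty)$. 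Proposition~\ref{prop:exp-convergence-rate-rbm} then bounds the integrand by $C_h\bigl[2+e^{\underline\gamma(X_k-\hat r_k)/2}\bigr]e^{-\sigma^2\underline\gamma(2\gamma-\underline\gamma)s/8}$. The crucial point is that the rate does not depend on the random level $\hat r_k$, so the time integral is at most $8/(\sigma^2\underline\gamma(2\gamma-\underline\gamma))$. Using $-\hat r_k\le K_h$, the $k$-th term is bounded by
\[
\frac{8C_h\left(2+e^{\underline\gamma K_h/2}\,\mathbb E_x\left[e^{\underline\gamma X_k/2}\right]\right)}{\sigma^2\underline\gamma(2\gamma-\underline\gamma)}.
\]

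The main obstacle is the exponential moment of the random, history-dependent switching state $X_k$. Here we invoke Corollary~\ref{Cor:1}, which gives $X_k\le Z_0(\tau_k)$ a.s. for $k\ge1$ (and $X_0=x=Z_0(0)$). Combined with Lemma~\ref{lemma:exp-moment-bound-Z0}, this yields $\mathbb E_x[e^{\underline\gamma X_k/2}]\le \mathbb E_x[e^{\underline\gamma Z_0(\tau_k)/2}]\le 4+e^{\underline\gamma x/2}$ uniformly in $k$. One should check that taking the outer expectation after bounding the conditional expression is legitimate. This holds because the bound is integrable by the same moment estimate, and the measurability of $X_k,\hat r_k$ is supplied by Lemma~\ref{lemma:Markov-property-AU}. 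Each of the $N+1$ terms then obeys the same constant bound. Since $N=\lfloor\log_2(T+1)\rfloor\le\log_2 T+1$ for $T\ge1$, the sum gives the factor $\log_2T+2$.

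For AU-FH the argument is identical with $\tilde r_k,\tilde X_k$ in place of $\hat r_k,X_k$. The analogue of Lemma~\ref{lemma:Markov-property-AU} holds since $\tilde r_k$ is also $\mathcal F_{\tau_k}$-measurable, we still have $\tilde r_k\in[-K_h,0]$, and Remark~\ref{remark:AU-FH} supplies $\tilde X_k\le Z_0(\tau_k)$. Taking the maximum over the two algorithms gives the stated bounds.
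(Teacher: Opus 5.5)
Your proposal is correct and follows the paper's proof essentially step by step. The shared steps are: condition on $\mathcal F_{\tau_k}$ via Lemma~\ref{lemma:Markov-property-AU}; apply Proposition~\ref{prop:exp-convergence-rate-rbm}, whose rate does not depend on $\hat r_k$, together with $-\hat r_k\le K_h$; bound $\mathbb E_x[e^{\underline\gamma X_k/2}]$ by $4+e^{\underline\gamma x/2}$; sum the $N+1\le\log_2T+2$ interval terms; and reuse the $R_5^{\mathrm{LTO}}$ bound for $R_4$. The only difference is that you make explicit the step the paper leaves implicit when it passes from $\mathbb E_x[e^{\underline\gamma X_k/2}]$ to $4+e^{\underline\gamma x/2}$, namely Corollary~\ref{Cor:1} combined with Lemma~\ref{lemma:exp-moment-bound-Z0}.
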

\begin{proof}
   We only prove \eqref{eq:transient-regret-bound-AU-1} and \eqref{eq:transient-regret-bound-AU-2} for the AU algorithm; the corresponding results for the AU-FH algorithm are similar.
    Proof of \eqref{eq:transient-regret-bound-AU-1}: By the tower property and Lemma~\ref{lemma:Markov-property-AU}, we have for each $k$,
    \begin{align*}
        \mathbb{E}_x\left[\int_{\tau_k}^{\tau_{k+1}} \left( h\left( Z_{\hat{r}_k}^{\tau_k,X_k}(t)\right)-C(\gamma,\hat{r}_k)\right)\mathrm{d}t\right]&=\mathbb{E}_x\left[\mathbb{E}_x\left[\int_{\tau_k}^{\tau_{k+1}} \left( h\left( Z_{\hat{r}_k}^{\tau_k,X_k}(t)\right)-C(\gamma,\hat{r}_k)\right)\mathrm{d}t\bigg|\mathcal{F}_{\tau_k}\right]\right]\\
        &\leq\mathbb{E}_x\left[\int_{0}^{\Delta\tau_k}\left\lvert \mathbb{E}_{X_k}^{(\hat{r}_k)}\left[h\left(Z_{\hat{r}_k}^{0,X_k}(s)\right)\right]-C(\gamma,\hat{r}_k)\right\rvert\mathrm{d}s \right]\\
        &\leq \frac{8C_h\left(2+e^{\underline{\gamma}K_h/2}\mathbb{E}_x\left[e^{\underline{\gamma}X_k/2}\right]\right)}{\sigma^2\underline{\gamma}(2\gamma-\underline{\gamma})}\left(1-e^{-\sigma^2\underline{\gamma}(2\gamma-\underline{\gamma})\Delta\tau_k/8}\right)\\
        &\leq\frac{8C_h\left(2+e^{\underline{\gamma}K_h/2}\left(4+e^{\underline{\gamma}x/2}\right)\right)}{\sigma^2\underline{\gamma}(2\gamma-\underline{\gamma})}.
    \end{align*}
    And similarly, we obtain
    \begin{align*}
        \mathbb{E}\left[\int_{\tau_{N}}^T \left( h\left(Z_{\hat{r}_N}^{\tau_N,X_N}(t)\right)-C(\gamma,\hat{r}_{N})\right)\mathrm{d}t\right]\leq \frac{8C_h\left(2+e^{\underline{\gamma}K_h/2}\left(4+e^{\underline{\gamma}x/2}\right)\right)}{\sigma^2\underline{\gamma}(2\gamma-\underline{\gamma})}.
    \end{align*}
    Thus, by summing over $k=0$ to $N-1$ and adding the residual term, we have
    \begin{align*}
    R_{1}^{\mathrm{AU}}&=\mathbb{E}_x\left[\sum_{k=0}^{N-1}\int_{\tau_k}^{\tau_{k+1}} \left( h\left(Z_{\hat{r}_k}^{\tau_k,X_k}(t)\right)-C(\gamma,\hat{r}_k)\right)\mathrm{d}t\right]+\mathbb{E}\left[\int_{\tau_{N}}^T \left( h\left(Z_{\hat{r}_N}^{\tau_N,X_N}(t)\right)-C(\gamma,\hat{r}_{N})\right)\mathrm{d}t\right]\\
    &\leq \frac{8C_h\left(2+e^{\underline{\gamma}K_h/2}\left(4+e^{\underline{\gamma}x/2}\right)\right)}{\sigma^2\underline{\gamma}(2\gamma-\underline{\gamma})}(N+1)\\
    &\leq \frac{8C_h\left(2+e^{\underline{\gamma}K_h/2}\left(4+e^{\underline{\gamma}x/2}\right)\right)}{\sigma^2\underline{\gamma}(2\gamma-\underline{\gamma})} (\log_2(T+1)+1).
    \end{align*}
    Since for $T\geq 1$, $\log_2(T+1)=\log_2T+\log_2\left(1+\frac{1}{T}\right)\leq \log_2T+1$, we obtain the desired result.

    The proof of \eqref{eq:transient-regret-bound-AU-2} follows from the same argument as that of \eqref{eq:transient-regret-bound-LTO-3}.
\end{proof}

\section{Learning Regret Analysis}\label{sec:5}
In this section, we analyze $R_{4}^{\mathrm{LTO}}$, $R_{3}^{\mathrm{AU}}$ and $R_3^{\mathrm{AU\text{-}FH}}$, proceeding in two steps.
First, we study the interrelationship between the cost function $C(u,r)$, the reflection level $r$, and the learned parameter (denoted by $u$). Local regularity results imply that, in a neighborhood of the true parameter $\gamma$, the reflection level mapping $r(u)$ is Lipschitz continuous in $u$, and the cost function $C(u,r)$ is bounded by a quadratic function in $r$.
Second, we examine the efficiency of our estimator---specifically, its convergence rate to the true parameter $\gamma$. We establish a mean squared error (MSE) bound, which allows us to derive both the convergence rate and the probability that the estimated parameter lies outside the aforementioned neighborhood.

\subsection{Local Regularity Property}\label{sec:5.1}
In this part, we only show the key results; their proofs are provided in Appendix~\ref{app:B.1}.
We first need the continuity and differentiability of the functions $C(u,r)$ and $F(u,r)$.
\begin{lemma}\label{lemma:regularity}
    \begin{enumerate}[$(i)$]
    \item $C(u,r)$ is jointly continuous in $(u,r)$ on $[\underline{\gamma},\infty)\times\mathbb{R}$. Furthermore, $F(u,r)$ is jointly continuous on $[\underline{\gamma},\infty)\times\mathbb{R}$.
    \item $\partial_uF(u,r)=\partial_uC(u,r)$ is jointly continuous on $[\underline{\gamma},\infty)\times\mathbb{R}$.
    \item $\partial_rC(u,r)=u\left(C(u,r)-h(r)\right)$  and $\partial_rC(u,r)$ is jointly continuous on $[\underline{\gamma},\infty)\times\mathbb{R}$.
    \item $\partial_r F(u,r)=\partial_rC(u,r)-h^{\prime}(r)$ is jointly continuous on $[\underline{\gamma},\infty)\times(-\infty,0)$.
    \end{enumerate}
\end{lemma}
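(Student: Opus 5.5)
The plan is to work with the shifted representation $C(u,r)=u\int_0^\infty h(r+z)e^{-uz}\,\mathrm{d}z$ and reduce every claim to dominated convergence. The one nontrivial ingredient is a dominating function that is locally uniform in $(u,r)$.

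Fix a compact rectangle $[\underline{\gamma},U]\times[r_1,r_2]$. By Assumption~\ref{ass:1}$(iii)$ and continuity, there is $a>0$ with $h(y)\le a e^{by}$ for $y\ge 0$. Since $h$ is continuous, it is bounded on $[r_1,0\vee r_2]$. Hence for all $r\in[r_1,r_2]$ and $z\ge 0$,
\[
0\le h(r+z)\le A+a e^{b(r_2+z)}
\]
for some constant $A$. Multiplying by $e^{-uz}\le e^{-\underline{\gamma}z}$, and using $b<\underline{\gamma}/2$, gives the integrable majorant
\[
g(z):=\bigl(A+a e^{br_2}e^{bz}\bigr)e^{-\underline{\gamma}z}.
\]
The same majorant multiplied by $z$ is still integrable. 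This handles the $u$-derivative, because $\partial_u\bigl(ue^{-uz}\bigr)=(1-uz)e^{-uz}$ is bounded by $(1+Uz)e^{-\underline{\gamma}z}$.

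\textbf{Part $(i)$.} The integrand $h(r+z)e^{-uz}$ is jointly continuous in $(u,r)$ for each $z$, and it is dominated by $g$. Dominated convergence therefore gives joint continuity of $\int_0^\infty h(r+z)e^{-uz}\,\mathrm{d}z$, and multiplying by $u$ preserves it. Continuity of $F=C-h$ then follows from continuity of $h$.

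\textbf{Part $(ii)$.} I would differentiate under the integral sign in $u$, justified by the mean value theorem together with the majorant $(1+Uz)g(z)$. This gives
\[
\partial_uC(u,r)=\int_0^\infty h(r+z)(1-uz)e^{-uz}\,\mathrm{d}z.
\]
Joint continuity of this expression follows by the same dominated convergence argument as in $(i)$. Since $h(r)$ does not depend on $u$, we have $\partial_uF=\partial_uC$.

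\textbf{Part $(iii)$.} Here I would use the unshifted form $C(u,r)=ue^{ur}\int_r^\infty h(z)e^{-uz}\,\mathrm{d}z$. The integral converges because $u\ge\underline{\gamma}>b$, and the integrand is continuous. By the fundamental theorem of calculus and the product rule,
\[
\partial_rC(u,r)=u^2e^{ur}\int_r^\infty h(z)e^{-uz}\,\mathrm{d}z-uh(r)=u\bigl(C(u,r)-h(r)\bigr).
\]
Joint continuity is then immediate from $(i)$ and the continuity of $h$.

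\textbf{Part $(iv)$.} On $(-\infty,0)$, $h$ is $C^1$ by Assumption~\ref{ass:1}$(i)$. Hence $\partial_rF=\partial_rC-h'(r)$, which is jointly continuous on $[\underline{\gamma},\infty)\times(-\infty,0)$ by $(iii)$ and continuity of $h'$.

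The only real obstacle is obtaining the locally uniform majorant, in particular handling $h$ near $r<0$ while also controlling the exponential growth at $+\infty$. This is resolved by splitting at $0$ as above. Note that $h$ need not be differentiable at $0$, which is exactly why part $(iv)$ is restricted to $r<0$.
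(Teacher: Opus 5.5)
Your proposal is correct and follows the paper's proof essentially step for step. Parts $(i)$ and $(ii)$ use the shifted representation with dominated convergence, part $(iii)$ applies the fundamental theorem of calculus to the unshifted form, and part $(iv)$ uses continuity of $h'$ on $(-\infty,0)$. The differences are minor. You build one majorant uniform on compact rectangles via $e^{-uz}\le e^{-\underline{\gamma}z}$, whereas the paper works with sequential neighborhoods ($u_0/2\le u_n\le 2u_0$, $|r_n-r_0|\le 1$) and splits the integral at $L=\max\{0,1-r_0\}$. You also explicitly justify differentiating under the integral in $u$ via the mean value theorem, a step the paper only sketches.
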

We mainly justify the interchange of limits and derivatives with respect to the integral in Lemma~\ref{lemma:regularity}.

\begin{lemma}\label{lemma:exist-unique-finite-solution}
    There exists a constant $K_h>0$ such that for all $u\geq \underline{\gamma}$, the optimal equation $F(u,r)=0$ has a unique negative solution $r(u)\in(-K_h,0)$, such that
    \begin{align}\label{eq:optimal-r-function}
    C\left(u,r(u)\right)< C(u,r),\quad \forall r\in\mathbb{R}\setminus\{r(u)\}.
    \end{align}
\end{lemma}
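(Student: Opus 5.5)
The plan is to study $r\mapsto C(u,r)$ through its derivative $\partial_rC(u,r)=u\,F(u,r)$ (Lemma~\ref{lemma:regularity}$(iii)$). Because $u>0$, the sign of $\partial_r C$ is the sign of $F$. The proof then has three parts: locate the sign changes of $F(u,\cdot)$, show there is exactly one on $(-\infty,0)$, and make the left end of the search interval independent of $u\geq\underline{\gamma}$.

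\emph{Behaviour for $r\geq 0$.} For $r\ge 0$ and all $z>0$, strict monotonicity of $h$ on $[0,\infty)$ gives $h(r+z)>h(r)$. Writing $C(u,r)=u\int_0^\infty h(r+z)e^{-uz}\,dz$, we get $C(u,r)>h(r)$, that is, $F(u,r)>0$. So $C(u,\cdot)$ is strictly increasing on $[0,\infty)$, and no zero of $F$ lies in $[0,\infty)$.

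\emph{Uniqueness on $(-\infty,0)$.} Let $r_0<0$ satisfy $F(u,r_0)=0$. Lemma~\ref{lemma:regularity}$(iv)$ gives
\[
\partial_rF(u,r_0)=u\,F(u,r_0)-h'(r_0)=-h'(r_0)>0 .
\]
Hence every zero of $F(u,\cdot)$ on $(-\infty,0)$ is a strict up-crossing. A continuous function whose zeros are all strict up-crossings has at most one zero: between two consecutive up-crossings it would have to cross back down, which produces a zero that is not an up-crossing.

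\emph{Uniform existence with the bound $-K_h$.} We need $K_h$ with $F(u,-K_h)<0$ for all $u\ge\underline{\gamma}$; continuity and $F(u,0)>0$ then give a zero in $(-K_h,0)$. Fix $M>0$ with $h\le M$ on $[-1,0]$, and split the integral:
\[
C(u,r)\le h(r)\bigl(1-e^{u r}\bigr)+u\int_{-r}^{\infty}h(r+z)e^{-uz}\,dz .
\]
This uses the fact that $h$ is decreasing on $(-\infty,0]$, so $h(r+z)\le h(r)$ for $z\in[0,-r]$. It follows that
\[
F(u,r)\le -h(r)e^{ur}+u\int_0^\infty h(w)e^{-u(w-r)}\,dw=e^{ur}\bigl(C(u,0)-h(r)\bigr).
\]
So $F(u,r)<0$ as soon as $h(r)>C(u,0)$.

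Next, $C(u,0)=u\int_0^\infty h(w)e^{-uw}\,dw$ is bounded uniformly in $u\ge\underline{\gamma}$. By the growth bound $h(w)\le a e^{bw}$ with $b<\underline{\gamma}/2$, together with the bound $H$ on $h$ over $[0,w_0]$, we get
\[
C(u,0)\le H+\frac{ua}{u-b}\le H+2a .
\]
Since $h(x)\to\infty$ as $x\to-\infty$ (Assumption~\ref{ass:1}$(ii)$), we can choose $K_h$ with $h(-K_h)>H+2a$. Then $F(u,-K_h)<0$ for every $u\ge\underline{\gamma}$, and the unique zero $r(u)$ lies in $(-K_h,0)$.

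\emph{Global minimality.} The sign pattern of $F(u,\cdot)$ is negative on $(-\infty,r(u))$ and positive on $(r(u),\infty)$. Indeed, $F<0$ for $r<r(u)$ by uniqueness and $F(u,-K_h)<0$. For $r>r(u)$, $F>0$ on $(r(u),0)$ since there are no other zeros and $F(u,0)>0$, and $F>0$ on $[0,\infty)$ from the first step. Thus $C(u,\cdot)$ is strictly decreasing, then strictly increasing, which gives \eqref{eq:optimal-r-function}.

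\emph{Main obstacle.} The delicate point is the uniformity in $u$, i.e.\ bounding $C(u,0)$ independently of $u\ge\underline{\gamma}$ so that a single $K_h$ works. The key is to use the lower bound $\underline{\gamma}>2b$ to control $ua/(u-b)$ uniformly.
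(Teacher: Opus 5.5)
Your proof is correct and follows the paper's argument essentially step for step. The shared steps are positivity of $F(u,\cdot)$ on $[0,\infty)$, the bound $F(u,r)\le e^{ur}\bigl(C(u,0)-h(r)\bigr)$ from splitting the integral at $0$ with a $u$-uniform bound on $C(u,0)$, the intermediate value theorem on $[-K_h,0]$, and the sign pattern of $\partial_rC=uF$ for global minimality. The only difference is cosmetic and lies in uniqueness: the paper uses the integrating factor $G(u,r)=e^{-ur}F(u,r)$, whose derivative $-e^{-ur}h'(r)>0$ on all of $(-\infty,0)$ makes $G$ strictly increasing, which is a slightly tidier version of your observation that every zero is a transversal up-crossing.
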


\begin{lemma}\label{lemma:finite-stationary-cost}
    For each $r\in[-K_h,0]$, there exists a constant $M$ such that $C(\gamma,r)\leq M$.
\end{lemma}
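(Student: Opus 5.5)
The plan is to bound $C(\gamma,r)$ uniformly in $r\in[-K_h,0]$ by splitting the integrand according to the sign of $r+z$. After the change of variables already recorded in Section~\ref{sec:2.2}, we have
\[
C(\gamma,r)=\gamma\int_0^\infty h(r+z)e^{-\gamma z}\,\mathrm{d}z .
\]
We split the integral at $z=-r$, that is, into the region where $r+z\in[r,0]$ and the region where $r+z>0$.

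On the first region, $r+z\in[-K_h,0]$. The function $h$ is continuous on the compact set $[-K_h,0]$, so $h(r+z)\le H_K$, the constant introduced just before Lemma~\ref{lemma:exp-moment-bound-Z0}. This piece is therefore at most $\gamma H_K\int_0^{-r}e^{-\gamma z}\,\mathrm{d}z\le H_K$.

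On the second region, $r+z>0$. Assumption~\ref{ass:1}$(iii)$ (used as in Section~\ref{sec:4.2}) gives $h(y)\le a e^{by}$ for $y\ge 0$. Since $r\le 0$, we get $h(r+z)\le a e^{b(r+z)}\le a e^{bz}$. Because $b<\underline{\gamma}/2<\gamma$, this piece is bounded by
\[
\gamma a\int_0^\infty e^{-(\gamma-b)z}\,\mathrm{d}z=\frac{a\gamma}{\gamma-b}\le 2a .
\]

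Adding the two pieces, we may take $M:=H_K+2a$, which does not depend on $r$. The two facts this relies on are that $h(y)\le ae^{by}$ holds for all $y\ge0$ and not only asymptotically, and that $H_K$ is finite. The first follows from the limsup in Assumption~\ref{ass:1}$(iii)$ together with continuity of $h$ on compact sets, which is exactly how the constant $a$ was chosen earlier; if needed, one can enlarge $a$ to cover a compact initial segment of $[0,\infty)$. The second is immediate from continuity of $h$ on $[-K_h,0]$. Neither step presents a real obstacle; the only point requiring care is that $e^{br}\le 1$ for $r\le 0$, which is what makes the bound uniform in $r$.
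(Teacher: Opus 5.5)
Your proof is correct, but it takes a different route from the paper. The paper's argument is two lines. By Lemma~\ref{lemma:regularity}$(i)$, the map $r\mapsto C(\gamma,r)$ is continuous, so it attains a finite maximum $M:=\max_{r\in[-K_h,0]}C(\gamma,r)$ on the compact interval $[-K_h,0]$.

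You instead estimate the integral directly. You split at $z=-r$, use $h\le H_K$ on $[-K_h,0]$ and $h(y)\le ae^{by}$ on $[0,\infty)$, and obtain the explicit constant $M=H_K+2a$.

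The paper's route is shorter because it reuses the dominated-convergence work already done for joint continuity, which rests on exactly the two bounds you use. It gives no explicit value for $M$, however.

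Your argument is self-contained and expresses the constant in the primitives $a$ and $H_K$. It also proves slightly more. Since $u/(u-b)\le 2$ for every $u\ge\underline{\gamma}>2b$, and $H_K$ does not depend on $u$, the bound $C(u,r)\le H_K+2a$ holds uniformly over $(u,r)\in[\underline{\gamma},\infty)\times[-K_h,0]$. That uniformity would be convenient for tracking constants explicitly in the regret bounds.
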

Although Lemma~\ref{lemma:exist-unique-finite-solution} shows that the optimality equation $F(u,r)=0$ has a unique solution in $(-K_h,0)$ for all $u\geq\underline{\gamma}$, it is not enough to ensure the regularity required for the regret analysis. Indeed, when $u\to\infty$, $r(u)$ may tend to $0$, in which case it may result in
\[
\partial_r F(u,r(u))=uF(u,r(u))-h^{\prime}(r(u))=-h^{\prime}(r(u))\to-h^{\prime}(0).
\]
Assumption~\ref{ass:1} $(i)$ does not give the regularity of $h$ at $0$. Instead of establishing the global regularity, we only focus on the local regularity around the true parameter $\gamma$.
Formally, we have the following proposition.
\begin{proposition}\label{prop:local-lipschitz}
    There exist constants $\varepsilon_0>0$, $\delta_0>0$ and $L_0>0$ such that for all $u_1,u_2\in (\gamma-\varepsilon_0,\gamma+\varepsilon_0)\cap[\underline{\gamma},\infty)$, $r(u_1)$ and $r(u_2)$ lie in $\left(r^*-\delta_0,r^*+\delta_0\right)\subseteq(-K_h,0)$ and
    \begin{align}\label{eq:Lip-r-u}
    \lvert r(u_1)-r(u_2)\rvert\leq L_0\lvert u_1-u_2\rvert.
    \end{align}
\end{proposition}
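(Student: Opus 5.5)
This is an implicit function theorem argument applied to $F(u,r)=0$ at the point $(\gamma,r^*)$, using the regularity from Lemma~\ref{lemma:regularity}.

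\textbf{Step 1: nondegeneracy at the base point.} By Lemma~\ref{lemma:exist-unique-finite-solution}, $r^*=r(\gamma)\in(-K_h,0)$ and $F(\gamma,r^*)=0$. By Lemma~\ref{lemma:regularity}$(iv)$,
\[
\partial_rF(\gamma,r^*)=\gamma F(\gamma,r^*)-h'(r^*)=-h'(r^*),
\]
which is strictly positive because $h'<0$ on $(-\infty,0)$ (Assumption~\ref{ass:1}$(i)$). Since $r^*<0$ strictly, we can pick $\delta_0>0$ with $[r^*-\delta_0,r^*+\delta_0]\subset(-K_h,0)$.

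\textbf{Step 2: uniform derivative bounds on a box.} By joint continuity of $\partial_rF$ on $[\underline{\gamma},\infty)\times(-\infty,0)$, we may shrink $\delta_0$ and choose $\varepsilon_1>0$ so that on the compact box
\[
B:=\bigl([\gamma-\varepsilon_1,\gamma+\varepsilon_1]\cap[\underline{\gamma},\infty)\bigr)\times[r^*-\delta_0,r^*+\delta_0]
\]
we have $\partial_rF\geq m:=-h'(r^*)/2>0$. By Lemma~\ref{lemma:regularity}$(ii)$, $\partial_uF$ is continuous on $B$, so $\lvert\partial_uF\rvert\leq M_u$ there for some $M_u<\infty$.

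\textbf{Step 3: localization of $r(u)$.} This is the main point needing care, since uniqueness in Lemma~\ref{lemma:exist-unique-finite-solution} is only global over $(-K_h,0)$, so we must show the unique root for $u$ near $\gamma$ actually lies in the small window. Strict monotonicity in $r$ gives
\[
F(\gamma,r^*+\delta_0)\geq m\delta_0>0 \quad\text{and}\quad F(\gamma,r^*-\delta_0)\leq -m\delta_0<0 .
\]
By joint continuity of $F$ (Lemma~\ref{lemma:regularity}$(i)$), choose $\varepsilon_0\leq\varepsilon_1$ such that these two strict signs persist for all $u\in(\gamma-\varepsilon_0,\gamma+\varepsilon_0)\cap[\underline{\gamma},\infty)$. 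By the intermediate value theorem and the monotonicity of $F(u,\cdot)$ on the window, there is exactly one zero in $(r^*-\delta_0,r^*+\delta_0)$. Since $F(u,\cdot)$ has a unique zero in $(-K_h,0)$, this zero is $r(u)$.

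\textbf{Step 4: Lipschitz bound.} For $u_1,u_2$ in the neighborhood, the segment between them stays in the (convex) $u$-range of $B$. Applying the mean value theorem separately in $r$ and then in $u$:
\[
0=F(u_1,r(u_1))-F(u_2,r(u_2))=\partial_rF(u_1,\xi)\bigl(r(u_1)-r(u_2)\bigr)+\partial_uF(\eta,r(u_2))\,(u_1-u_2),
\]
where $\xi$ lies between $r(u_1)$ and $r(u_2)$, and $\eta$ lies between $u_1$ and $u_2$. All evaluation points lie in $B$, so
\[
\lvert r(u_1)-r(u_2)\rvert\leq \frac{M_u}{m}\,\lvert u_1-u_2\rvert,
\]
and we can take $L_0:=M_u/m$.
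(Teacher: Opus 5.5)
Your argument is correct, and it follows a more elementary route than the paper's proof.

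\textbf{The paper's route.} The paper applies the implicit function theorem at $(\gamma,r^*)$ to get a $C^1$ local branch $u\mapsto r(u)$ with values in $(r^*-\delta_0,r^*+\delta_0)$. It identifies this branch with the global root via Lemma~\ref{lemma:exist-unique-finite-solution}. It then bounds $r'(u)=-\partial_uF/\partial_rF$ by $2M_1/m_0$ on a shrunken neighborhood and applies the mean value theorem to $r(\cdot)$.

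\textbf{Your route.} You prove the one-dimensional existence and localization step by hand. $F(u,\cdot)$ changes sign at $r^*\pm\delta_0$, this sign change persists for $u$ near $\gamma$ by continuity, and the intermediate value theorem together with global uniqueness identifies the zero in the window as $r(u)$. You then get the Lipschitz bound from a two-step mean value theorem applied to $F$ itself. So you never need $r(\cdot)$ to be differentiable, and your constant $M_u/m$ is the paper's $2M_1/m_0$.

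\textbf{What each route buys.} Every step of your argument uses only one-sided continuity and mean value arguments on intervals inside $[\underline{\gamma},\infty)$, so the boundary case $\gamma=\underline{\gamma}$ is covered automatically. The paper's use of the implicit function theorem needs a full open neighborhood of $(\gamma,r^*)$. It therefore needs a separate remark extending the regularity of $C$ and $F$ to $[\underline{\gamma}-\eta_0,\infty)$. In exchange, the paper's route also shows that $r(\cdot)$ is continuously differentiable near $\gamma$, which the proposition does not need.
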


This proposition tells us the Lipschitz continuity of $r(u)$ around $\gamma$. 
\begin{proposition}\label{prop:costgap}
    There exists a constant $L_1$ such that for each $r\in[r^*-\delta_0,r^*+\delta_0]$,
    \begin{align}\label{eq:quadratic-C-r}
        0\leq C(\gamma,r)-C(\gamma,r^*)\leq L_1(r-r^*)^2.
    \end{align}
\end{proposition}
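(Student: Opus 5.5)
The lower bound $0\leq C(\gamma,r)-C(\gamma,r^*)$ is immediate from Lemma~\ref{lemma:exist-unique-finite-solution}, since $r^*=r(\gamma)$ minimizes $C(\gamma,\cdot)$. For the upper bound I will use a second-order Taylor expansion of $g(r):=C(\gamma,r)$ around $r^*$, exploiting the first-order condition. By Lemma~\ref{lemma:regularity}(iii), $g'(r)=\gamma\left(g(r)-h(r)\right)=\gamma F(\gamma,r)$, and $F(\gamma,r^*)=0$ by \eqref{eq:optimal}, so $g'(r^*)=0$.

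Rather than differentiating twice (which would need $h$ to be $C^1$ near $r^*$ — available since $r^*-\delta_0>-K_h$ and the interval lies in $(-\infty,0)$, but let me avoid it), I plan to write, for $r\in[r^*-\delta_0,r^*+\delta_0]$,
\[
g(r)-g(r^*)=\int_{r^*}^{r} g'(s)\,\mathrm{d}s=\gamma\int_{r^*}^{r}\left(F(\gamma,s)-F(\gamma,r^*)\right)\mathrm{d}s .
\]
Then I bound $\lvert F(\gamma,s)-F(\gamma,r^*)\rvert\leq L\lvert s-r^*\rvert$ with $L:=\sup_{s\in[r^*-\delta_0,r^*+\delta_0]}\lvert\partial_rF(\gamma,s)\rvert$. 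This supremum is finite because, by Lemma~\ref{lemma:regularity}(iv), $\partial_rF(\gamma,\cdot)=\gamma F(\gamma,\cdot)-h'(\cdot)$ is continuous on $(-\infty,0)$. The compact interval $[r^*-\delta_0,r^*+\delta_0]$ is contained in $(-\infty,0)$, since Proposition~\ref{prop:local-lipschitz} gives $(r^*-\delta_0,r^*+\delta_0)\subseteq(-K_h,0)$. One subtlety: the endpoint $r^*+\delta_0$ could equal $0$. If so, I will shrink $\delta_0$ slightly, or note that $h$ is $C^1$ only on $(-\infty,0)$ and handle it by replacing $\delta_0$ with $\delta_0/2$ in the proposition. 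Better still, I will choose $\delta_0$ in Proposition~\ref{prop:local-lipschitz} so that the closed interval lies in $(-K_h,0)$; the proof of that proposition allows this freedom.

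Integrating gives $g(r)-g(r^*)\leq \gamma L\left\lvert\int_{r^*}^{r}\lvert s-r^*\rvert\,\mathrm{d}s\right\rvert=\tfrac{\gamma L}{2}(r-r^*)^2$, so $L_1:=\gamma L/2$ works. The only real obstacle is ensuring that $\partial_rF(\gamma,\cdot)$ is bounded on a closed neighborhood that stays strictly away from $0$, where $h'$ may fail to exist. This is handled by the closed-interval containment discussed above.
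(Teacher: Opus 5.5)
Your proof is correct and is essentially the paper's argument. The paper applies Taylor's formula with Lagrange remainder, using $\partial_{rr}C(\gamma,r)=\gamma\,\partial_rF(\gamma,r)$ and $\partial_rC(\gamma,r^*)=0$, and obtains the same constant $L_1=\tfrac{\gamma}{2}\sup_{s\in[r^*-\delta_0,r^*+\delta_0]}\lvert\partial_rF(\gamma,s)\rvert$ that your integral form gives. Your version therefore does not actually avoid needing $h\in C^1$ near $r^*$; it only repackages the same bound.

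Your care about the closed interval lying strictly inside $(-\infty,0)$ is well placed. The paper simply asserts $[r^*-\delta_0,r^*+\delta_0]\subseteq(-K_h,0)$, whereas Proposition~\ref{prop:local-lipschitz} only states the open inclusion. Closing that gap requires shrinking $\delta_0$, and $\varepsilon_0$ correspondingly, exactly as you propose.
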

This proposition tells us that the stationary cost gap in the neighborhood of the optimal $r^*$ is bounded by a quadratic function in $r$. Define the good event 
\begin{align}\label{eq:good-event}
G_{\varepsilon_0}:=\{u\geq\underline{\gamma}:\lvert u-\gamma\rvert\leq \varepsilon_0\}.
\end{align}
Then Proposition~\ref{prop:local-lipschitz} and Proposition~\ref{prop:costgap} hold on $G_{\varepsilon_0}$.

\subsection{Consistency and Mean Squared Error Bound for the Estimator}\label{sec:5.2}
Based on the ergodicity property in Lemma~\ref{lemma:ergodicity}, it is natural to use $\left(\frac{1}{\tau}\int_0^{\tau}Z_0(t)\mathrm{d}t\right)^{-1}$ to approximate $\gamma$ in LTO algorithm. Since we have the prior lower bound $\underline{\gamma}>0$ for $\gamma$, we let 
\[
\hat{\gamma}=\hat{\gamma}(\tau):=\left(\frac{1}{\tau}\int_0^{\tau}Z_0(t)\mathrm{d}t\right)^{-1}\vee\underline{\gamma}
\]
to be the estimator for $\gamma$. From this definition, $\hat{\gamma}$ explicitly depends on the choice of the exploration period $\tau$.
The estimator $\hat{\gamma}_k$ defined in \eqref{eq:hat-gamma-k} for AU algorithm and the estimator $\tilde{\gamma}_k$ defined in \eqref{eq:tilde-gamma-k} for the AU-FH algorithm follow the same design idea. Since $\hat{\gamma}_k$ is constructed based on the interval $[\tau_{k-1},\tau_k)$, during which the state process evolves as an RBM with reflecting level $\hat{r}_{k-1}$, we let
\[
\hat{\gamma}_{k}:=\left(\frac{1}{\Delta\tau_{k-1}}\int_{\tau_{k-1}}^{\tau_{k}}\left(Z_{\hat{r}_{k-1}}^{\tau_{k-1},X_{k-1}}(t)-\hat{r}_{k-1}\right)\mathrm{d}t\right)^{-1}\vee \underline{\gamma}.
\]
Since $\tilde{\gamma}_k$ is constructed based on the interval $[0,\tau_k)$, we let
\begin{align}\label{eq:tilde-gamma-k-rewrite}
\tilde{\gamma}_{k}:=\left(\frac{1}{\tau_{k}}\sum_{i=0}^{k-1}\int_{\tau_i}^{\tau_{i+1}}\left(Z_{\tilde{r}_{i-1}}^{\tau_{i-1},\tilde{X}_{i-1}}(s)-\tilde{r}_{i-1}\right)\mathrm{d}s\right)^{-1}\vee\underline{\gamma}.
\end{align}
For $k\geq1$, let $$\tilde{q}_{k}:=\frac{1}{\Delta\tau_{k-1}}\int_{\tau_{k-1}}^{\tau_{k}}\left(Z_{\tilde{r}_{k-1}}^{\tau_{k-1},\tilde{X}_{k-1}}(s)-\tilde{r}_{k-1}\right)\mathrm{d}s$$ and $$\tilde{q}_{k}^{\mathrm{FH}}:=\frac{\sum_{i=0}^{k-1}\Delta\tau_i \tilde{q}_{i+1}}{\sum_{i=0}^{k-1}\Delta\tau_i},$$
then $\tilde{\gamma}_{k}$ defined in \eqref{eq:tilde-gamma-k-rewrite}  can rewrite as $\tilde{\gamma}_{k}=\frac{1}{\tilde{q}_{k}^{\mathrm{FH}}}\vee\underline{\gamma}$.

\begin{lemma}[Consistency of the estimators]\label{lemma:consistency}
    The following hold.
    \begin{enumerate} 
    \item [(i)] 
    The estimator $\hat{\gamma}$ in $\mathrm{LTO}$ algorithm satisfies $\hat{\gamma}\to\gamma$ as $\tau\to \infty$.
    \item [(ii)]
    The estimator $\hat{\gamma}_k$ in $\mathrm{AU}$ algorithm satisfies $\hat{\gamma}_k\to\gamma$ as $k\to \infty$.
    \item [(iii)]
    The estimator $\tilde{\gamma}_k$ in $\mathrm{AU}\text{-}\mathrm{FH}$ algorithm satisfies $\tilde{\gamma}_k\to\gamma$ as $k\to \infty$.
    \end{enumerate}
\end{lemma}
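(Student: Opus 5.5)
We read the three statements as almost-sure convergence (convergence in probability would follow as well). The reciprocal map $q\mapsto (1/q)\vee\underline{\gamma}$ is continuous on $(0,\infty)$, and $\gamma\geq\underline{\gamma}$. So in each case it suffices to show that the relevant time average, namely $\frac1\tau\int_0^\tau Z_0$, or $\hat q_k$ (the AU analogue of $\tilde q_k$), or $\tilde q_k^{\mathrm{FH}}$, converges a.s. to $1/\gamma>0$. The continuous mapping theorem then gives the claim.

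Part (i) follows directly from Lemma~\ref{lemma:ergodicity} with $r=0$. For $x\geq 0$ we have $Z^{\mathrm{LTO}}=Z_0$ on $[0,\tau)$, and $\frac1\tau\int_0^\tau Z_0(t)\,\mathrm{d}t\to 1/\gamma$ a.s. as $\tau\to\infty$. Hence $\hat\gamma\to\gamma$ a.s.

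Part (ii) is harder, because each interval uses a different, random reflection level and starts from a random state. I would avoid ergodic theorems for moving boundaries and use the explicit Skorokhod representation on each interval instead. On $I_{k-1}$, by the strong Markov property (Lemma~\ref{lemma:Markov-property-AU}), the process $W_{k-1}(s):=Z^{\tau_{k-1},X_{k-1}}_{\hat r_{k-1}}(\tau_{k-1}+s)-\hat r_{k-1}$ is a standard RBM with drift $\theta$ and variance $\sigma^2$. It starts at $X_{k-1}-\hat r_{k-1}\geq 0$ and is driven by the fresh Brownian motion $B(\tau_{k-1}+\cdot)-B(\tau_{k-1})$, independent of $\mathcal F_{\tau_{k-1}}$.

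The plan is to compare $W_{k-1}$ pathwise with the RBM $W^0_{k-1}$ started at $0$ and driven by the same increments. Monotonicity of the Skorokhod map (Lemma~\ref{lemma:moving-boundary-Skorokhod}) gives $0\leq W_{k-1}-W^0_{k-1}\leq X_{k-1}-\hat r_{k-1}$. Moreover, the two processes coincide after the first hitting time of $0$ by $W_{k-1}$. Hence
$$\Bigl|\hat q_k-\tfrac{1}{\Delta\tau_{k-1}}\int_0^{\Delta\tau_{k-1}}W^0_{k-1}\Bigr|\leq \frac{(X_{k-1}+K_h)\,\sigma_{k-1}}{\Delta\tau_{k-1}},$$
where $\sigma_{k-1}$ denotes that hitting time. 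Corollary~\ref{Cor:1} gives $X_{k-1}\leq Z_0(\tau_{k-1})$, so $X_{k-1}$ has exponential moments bounded uniformly in $k$ by Lemma~\ref{lemma:exp-moment-bound-Z0}. The hitting time of zero for a negative-drift Brownian motion from level $y$ has mean $y/|\theta|$ and finite second moments, with bounds polynomial in $y$. Therefore the error term has second moment $O(4^{-k})$, which is summable, and Borel--Cantelli makes it vanish a.s.

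For the main term, $\frac1{\Delta\tau_{k-1}}\int_0^{\Delta\tau_{k-1}}W^0_{k-1}$ has the law of $\frac1{t}\int_0^{t}W^0$ at $t=2^{k-1}$, the time average of a stationary-regime RBM started at $0$. Its variance is $O(1/t)$: the covariance decays exponentially by Proposition~\ref{prop:exp-convergence-rate-rbm}, and the $V_r$-norm dominates $z^2$. Its bias is also $O(1/t)$ by the same proposition. A Chebyshev bound of order $2^{-(k-1)}\varepsilon^{-2}$ is summable along the doubling grid, so Borel--Cantelli yields a.s. convergence to $1/\gamma$. This step is the main obstacle: controlling the random initial displacement uniformly requires the domination by $Z_0$. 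If the variance computation proves awkward, I would fall back on fourth moments or on a martingale decomposition via the Poisson equation $\Gamma g=z-1/\gamma$, $g'(0)=0$, which has an explicit solution.

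Part (iii) then follows from (ii). By the argument of (ii) applied to $\tilde r_k,\tilde X_k$ (using Remark~\ref{remark:AU-FH}), $\tilde q_{i+1}\to 1/\gamma$ a.s. The quantity $\tilde q^{\mathrm{FH}}_k$ is a weighted average with weights $\Delta\tau_i/\tau_k$, and each fixed weight tends to $0$. Toeplitz's (Ces\`aro-type) lemma therefore gives $\tilde q^{\mathrm{FH}}_k\to1/\gamma$ a.s., and hence $\tilde\gamma_k\to\gamma$.
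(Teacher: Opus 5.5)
Your argument is correct. For (i) and (iii) it follows the paper's proof: Lemma~\ref{lemma:ergodicity} with $r=0$, then the Toeplitz lemma with weights $w_{k,i}=2^i/(2^k-1)$.

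The difference is in (ii). The paper settles it in one sentence, applying Lemma~\ref{lemma:ergodicity} ``with the reflection level replaced by $\hat r_k$'' and noting $\Delta\tau_{k-1}\to\infty$. That lemma is an almost-sure statement for a single fixed-level RBM over a growing window. By contrast, $\hat q_k$ is a triangular array, with fresh noise, a random level and a random start on each interval. So your coupling-plus-Borel--Cantelli argument along the doubling grid is what actually justifies almost-sure convergence. It also supplies the input $\tilde q_{i+1}\to1/\gamma$ a.s.\ that the Toeplitz step in (iii) needs, which the paper likewise takes from Lemma~\ref{lemma:ergodicity}.

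You can shorten your argument using results the paper proves anyway. Your fallback is exactly Lemma~\ref{lemma:MSE-bound}. Its conditional form (via the strong Markov property), combined with Lemma~\ref{lemma:bound-X-4}, gives $\mathbb{E}_x[(\hat q_k-1/\gamma)^2]\le C\,2^{-(k-1)}$. Chebyshev and Borel--Cantelli then finish (ii), with no hitting-time or covariance-decay estimates.

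If you keep your route, two details need adjusting:
\begin{itemize}
\item The bound $0\le W_{k-1}-W^0_{k-1}\le X_{k-1}-\hat r_{k-1}$ is monotonicity in the initial condition, not in the boundary as in Lemma~\ref{lemma:moving-boundary-Skorokhod}. It follows at once from the explicit identity $W_{k-1}(s)-W^0_{k-1}(s)=\bigl(y+\inf_{u\le s}\xi(u)\bigr)^+$, where $\xi$ is the fresh drifted Brownian motion and $y$ the initial displacement.
\item The covariance bound needs $\sup_s\mathbb{E}[W^0(s)e^{\underline{\gamma}W^0(s)/2}]<\infty$. This holds because $W^0(s)$ is stochastically dominated by an $\mathrm{Exp}(\gamma)$ variable and $\underline{\gamma}/2<\gamma$. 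The process started at $0$ is not in the stationary regime, as you phrase it, but it is dominated by the stationary one.
\end{itemize}
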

The proofs in this subsection are  provided in Appendix~\ref{app:B.2}.
In the following, we prove the MSE bound for these time-average estimators.
We first establish a uniform polynomial moment bound for the reflected Brownian motion $Z_r$ defined in \eqref{eq:SDE-startfrom-s}.

\begin{lemma}\label{lemma:Zr-polynomial-bound}
    For $p\geq1$ and $p\in\mathbb{N}$, 
    \[
    \sup_{t\geq 0}\mathbb{E}_x\left[\left\lvert Z_r(t)\right\rvert^p\right]\leq 2^{2p-2}\left(\lvert r\rvert^p+(x-r)^p+\frac{p!}{\gamma^p}\right). 
    \] 
\end{lemma}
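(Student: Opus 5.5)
We must bound $\sup_t \mathbb{E}_x|Z_r(t)|^p$. The plan is to split $Z_r(t)=r+(Z_r(t)-r)$ and use the elementary inequality $|a+b|^p\le 2^{p-1}(|a|^p+|b|^p)$. This gives
\[
\mathbb{E}_x|Z_r(t)|^p\le 2^{p-1}\left(|r|^p+\mathbb{E}_x\left[(Z_r(t)-r)^p\right]\right).
\]
It then remains to show $\mathbb{E}_x[(Z_r(t)-r)^p]\le 2^{p-1}\left((x-r)^p+p!/\gamma^p\right)$ uniformly in $t$. Since $2^{p-1}\le 2^{2p-2}$ for the $|r|^p$ term, combining the two bounds yields the stated constant $2^{2p-2}$.

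For the second bound, I would use the stochastic ordering of Lemma~\ref{lemma:rbm-ergodicity}$(i)$, or more directly the explicit Skorokhod representation \eqref{eq:Skorokhod-mapping-Z}. Write $W=Z_r-r$, which is an RBM at $0$ started from $x-r\ge 0$. From \eqref{eq:Skorokhod-mapping-Z},
\[
W(t)=\max\Big\{(x-r)+\theta t+\sigma B(t),\ \sup_{0\le u\le t}\big(\theta(t-u)+\sigma(B(t)-B(u))\big)\Big\}.
\]
Both terms are at most $(x-r)+\tilde W(t)$, where $\tilde W(t):=\sup_{0\le u\le t}(\theta(t-u)+\sigma(B(t)-B(u)))$ is the RBM started from $0$. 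Hence $0\le W(t)\le (x-r)+\tilde W(t)$ pathwise.

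By time reversal, $\tilde W(t)$ has the same law as $\sup_{0\le s\le t}(\theta s+\sigma B(s))$, and this is stochastically increasing in $t$. It is therefore dominated by the all-time supremum of a Brownian motion with negative drift, which is exponential with rate $\gamma=-2\theta/\sigma^2$ (consistent with $\pi_r$ in \eqref{eq:stationary-Zr}). Consequently $\mathbb{E}[\tilde W(t)^p]\le p!/\gamma^p$ for all $t$. Applying the elementary inequality once more,
\[
\mathbb{E}_x[W(t)^p]\le 2^{p-1}\left((x-r)^p+\mathbb{E}[\tilde W(t)^p]\right)\le 2^{p-1}\left((x-r)^p+\frac{p!}{\gamma^p}\right),
\]
and taking the supremum over $t$ completes the argument.

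The only delicate step is the time-reversal and monotonicity claim that gives uniform domination by the exponential law. This is standard: $\{B(t)-B(t-s)\}_{s\le t}$ is again a Brownian motion, so $\tilde W(t)\overset{d}{=}\sup_{s\le t}(\theta s+\sigma B(s))\le \sup_{s\ge0}(\theta s+\sigma B(s))\sim\mathrm{Exp}(\gamma)$. Everything else is elementary, and $p\in\mathbb{N}$ is used only to express the exponential moments as $p!/\gamma^p$.
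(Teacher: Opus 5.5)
Your proof is correct, and its skeleton matches the paper's. You split off $r$ using $|a+b|^p\le 2^{p-1}(|a|^p+|b|^p)$. You then dominate $Z_r(t)-r$ pathwise by $(x-r)$ plus the RBM started at $0$ and driven by the same Brownian motion; the paper gets this from the $1$-Lipschitz property of the Skorokhod map, and your explicit max formula is the same fact written out. Finally you bound the $p$-th moment of that RBM uniformly in $t$ by $p!/\gamma^p$. The only genuine difference is how this last step is justified. The paper couples $Z_0^{0,0}$ with a stationary copy $Z_0^{0,\pi_0}$, started from an $\mathrm{Exp}(\gamma)$ variable (implicitly independent of $B$) and driven by the same Brownian motion. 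It then uses monotonicity of the Skorokhod map in the initial condition and the invariance of $\pi_0$. That route reuses facts already stated in the paper, but the stationary copy has to be built on a possibly enlarged probability space. You instead use time reversal to identify $\tilde W(t)$ in law with $\sup_{s\le t}(\theta s+\sigma B(s))$, and dominate it by the all-time supremum of a Brownian motion with negative drift. Your route stays on the original probability space and takes the standard exponential law of that supremum as input rather than the stationary law. It also gives the slightly stronger conclusion that $\tilde W(t)$ is stochastically increasing in $t$. Both arguments give exactly the constant $2^{2p-2}$.
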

We have the following lemma, which provides an MSE bound for the reciprocal of the estimator without considering the prior lower bound.
\begin{lemma}\label{lemma:MSE-bound}
    There exists a constant $C_3<\infty$ (depending on $\theta$, $\sigma$, and $K_h$) such that for each $r\in[-K_h,0]$, $x\geq r$ and $T\geq 1$,
    \begin{align}\label{eq:MSE-bound}
        \mathbb{E}_x\left[\left\lvert \frac{1}{T}\int_0^T \left(Z_r(t)-r\right)\mathrm{d}t-\frac{1}{\gamma}\right\rvert^2\right]\leq \frac{C_3(1+\lvert x\rvert^4)}{T}.
    \end{align}
\end{lemma}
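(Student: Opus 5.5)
The plan is to use a Poisson-equation (martingale) decomposition of the additive functional $\int_0^T (Z_r(t)-r)\,\mathrm{d}t$. Write $W(t):=Z_r(t)-r$. This is an RBM on $[0,\infty)$ with drift $\theta$ and variance $\sigma^2$, started at $W(0)=x-r\geq 0$, and $\mathrm{d}W=\theta\,\mathrm{d}t+\sigma\,\mathrm{d}B+\mathrm{d}Y_r$, where $Y_r$ increases only when $W=0$.

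First I would find an explicit solution $g$ of the Poisson equation
\[
\Gamma g(w)=w-\tfrac{1}{\gamma},\qquad g'(0)=0.
\]
Trying $g(w)=aw^2+cw$ gives $\theta(2aw+c)+\sigma^2 a=w-1/\gamma$. Hence $a=1/(2\theta)$, and then $\theta c=-1/\gamma-\sigma^2/(2\theta)=0$ because $1/\gamma=-\sigma^2/(2\theta)$. So $g(w)=w^2/(2\theta)$, which satisfies the Neumann condition $g'(0)=0$. Itô's formula for the semimartingale $W$ gives
\[
g(W(T))-g(W(0))=\int_0^T \Gamma g(W(t))\,\mathrm{d}t+\sigma\int_0^T g'(W(t))\,\mathrm{d}B(t)+\int_0^T g'(W(t))\,\mathrm{d}Y_r(t).
\]
The last integral vanishes by condition (c) and $g'(0)=0$. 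Therefore
\[
\int_0^T\Bigl(W(t)-\tfrac1\gamma\Bigr)\mathrm{d}t=\frac{W(T)^2-W(0)^2}{2\theta}-\frac{\sigma}{\theta}\int_0^T W(t)\,\mathrm{d}B(t).
\]

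Next, divide by $T$, square, and use $(a+b)^2\le 2a^2+2b^2$. For the boundary term,
\[
\frac{1}{T^2}\,\mathbb{E}_x\bigl[(W(T)^2-W(0)^2)^2\bigr]/(4\theta^2)\le \frac{1}{2\theta^2T^2}\Bigl(\mathbb{E}_x[W(T)^4]+(x-r)^4\Bigr).
\]
Since $W(T)\le |Z_r(T)|+K_h$, Lemma~\ref{lemma:Zr-polynomial-bound} with $p=4$ bounds $\mathbb{E}_x[W(T)^4]$ uniformly in $T$ by a constant times $1+|x|^4$. The constant is uniform over $r\in[-K_h,0]$ because $|r|\le K_h$ and $x-r\le |x|+K_h$. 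Using $T\ge1$, this term is at most $C(1+|x|^4)/T$.

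For the stochastic integral, Itô's isometry gives
\[
\frac{\sigma^2}{\theta^2T^2}\int_0^T\mathbb{E}_x[W(t)^2]\,\mathrm{d}t\le \frac{C(1+|x|^2)}{T},
\]
again by Lemma~\ref{lemma:Zr-polynomial-bound} with $p=2$. Combining the two bounds and using $1+|x|^2\le 2(1+|x|^4)$ yields \eqref{eq:MSE-bound}, with $C_3$ depending only on $\theta$, $\sigma$ and $K_h$ (note $\gamma$ is itself a function of $\theta,\sigma$).

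The main technical point is justifying the Itô step. Three things must hold: the boundary term disappears, which follows from the Skorokhod complementarity condition (c) together with $g'(0)=0$; $\int_0^\cdot W\,\mathrm{d}B$ is a true square-integrable martingale, so that the isometry applies, which follows from $\sup_t\mathbb{E}_x[W(t)^2]<\infty$ given by Lemma~\ref{lemma:Zr-polynomial-bound}; and the identity holds with $\Gamma$ applied in the interior only. The third is fine since $g$ is $C^2$ and $W$ spends zero Lebesgue time at $0$, as in the occupation-time argument of Section~\ref{sec:3.1}. If one prefers to avoid any doubt about the martingale property, one can localize with $\inf\{t: W(t)\ge n\}$ and pass to the limit by Fatou and dominated convergence using the same moment bounds.
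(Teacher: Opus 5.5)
Your proposal is correct and follows essentially the same route as the paper: the paper also uses the explicit Poisson solution $f(z)=(z-r)^2/(2\theta)$ (your $g$ in the shifted variable $w=z-r$), drops the reflection term via $f'(r)=0$, and bounds the squared boundary term and the stochastic integral (via It\^o's isometry) with Lemma~\ref{lemma:Zr-polynomial-bound} for $p=4$ and $p=2$, using $T\ge 1$ and $|r|\le K_h$. One simplification: since $g$ is a polynomial and $\Gamma g(w)=w-1/\gamma$ holds for every real $w$, It\^o's formula applies directly, so the zero-occupation-time argument at the boundary is not needed.
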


Recall $G_{\varepsilon_0}$ in \eqref{eq:good-event}. We have the following results for  the estimator $\hat{\gamma}$.

\begin{lemma}\label{lemma:learning-two-results}
The following hold:
\begin{subequations}
\begin{align}
\mathbb{P}_x(\hat{\gamma}\in G_{\varepsilon_0}^c)&\leq\frac{\gamma^2(\gamma+\varepsilon_0)^2C_3(1+\lvert x\rvert^4)}{\varepsilon_0^2\tau};\label{eq:probability-bad-event}\\
\mathbb{E}_x\left[\left\lvert\hat{\gamma}-\gamma\right\rvert^2\mathbf{1}_{\{\hat{\gamma} \in G_{\varepsilon_0}\}}\right]&\leq\frac{\gamma^2(\gamma+\varepsilon_0)^2C_3(1+\lvert x\rvert^4)}{\tau}\label{eq:convergence-rate-hat-gamma}.
\end{align}
\end{subequations}
\end{lemma}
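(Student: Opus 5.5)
Both bounds follow from Lemma~\ref{lemma:MSE-bound} applied with $r=0$ and horizon $\tau$, together with an elementary bound that turns deviations of $\hat\gamma$ into deviations of the time average. Write
\[
\bar q:=\frac{1}{\tau}\int_0^{\tau}Z_0(t)\,\mathrm{d}t>0,\qquad \hat\gamma=\bar q^{-1}\vee\underline{\gamma},
\]
so that $\mathbb{E}_x[\lvert\bar q-1/\gamma\rvert^2]\le C_3(1+\lvert x\rvert^4)/\tau$ (assuming $\tau\ge 1$, as in Lemma~\ref{lemma:MSE-bound}). We will not need the truncation at $\underline{\gamma}$ in any essential way. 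Since $\gamma\ge\underline{\gamma}$, the map $u\mapsto u\vee\underline{\gamma}$ is $1$-Lipschitz and fixes $\gamma$, so
\[
\lvert\hat\gamma-\gamma\rvert\le\lvert\bar q^{-1}-\gamma\rvert
\]
always. In particular, $\hat\gamma\in G_{\varepsilon_0}^c$ forces $\lvert \bar q^{-1}-\gamma\rvert>\varepsilon_0$.

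\emph{Key algebraic step.} We bound $\lvert\bar q^{-1}-\gamma\rvert$ in terms of $\lvert\bar q-1/\gamma\rvert$ using $\bar q^{-1}-\gamma=\gamma\bar q^{-1}(1/\gamma-\bar q)$.

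\emph{Bad event.} Split $\{\lvert \bar q^{-1}-\gamma\rvert>\varepsilon_0\}$ into two parts.
\begin{itemize}
\item If $\bar q^{-1}>\gamma+\varepsilon_0$, then $\bar q<1/(\gamma+\varepsilon_0)$, so
\[
1/\gamma-\bar q>\frac{1}{\gamma}-\frac{1}{\gamma+\varepsilon_0}=\frac{\varepsilon_0}{\gamma(\gamma+\varepsilon_0)}.
\]
\item If $\bar q^{-1}<\gamma-\varepsilon_0$, then $\bar q-1/\gamma>\varepsilon_0/(\gamma(\gamma-\varepsilon_0))\ge\varepsilon_0/(\gamma(\gamma+\varepsilon_0))$.
\end{itemize}
In either case $\lvert\bar q-1/\gamma\rvert>\varepsilon_0/(\gamma(\gamma+\varepsilon_0))$. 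Chebyshev's inequality and Lemma~\ref{lemma:MSE-bound} then give \eqref{eq:probability-bad-event}.

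\emph{MSE on the good event.} On $\{\hat\gamma\in G_{\varepsilon_0}\}$ we have $\hat\gamma\le\gamma+\varepsilon_0$. This is the main obstacle: we want $\bar q^{-1}\le\gamma+\varepsilon_0$ as well, and the truncation could in principle interfere. It does not. If $\bar q^{-1}\ge\underline{\gamma}$, then $\hat\gamma=\bar q^{-1}$ and the claim holds. Otherwise $\hat\gamma=\underline{\gamma}$, and the case is handled directly:
\[
\lvert\hat\gamma-\gamma\rvert=\gamma-\underline{\gamma}\le\gamma-\bar q^{-1}=\gamma\bar q^{-1}(\bar q-1/\gamma)\le\gamma\underline{\gamma}\,\lvert\bar q-1/\gamma\rvert\le\gamma(\gamma+\varepsilon_0)\lvert\bar q-1/\gamma\rvert.
\]
In the first case,
\[
\lvert\hat\gamma-\gamma\rvert=\gamma\bar q^{-1}\lvert\bar q-1/\gamma\rvert\le\gamma(\gamma+\varepsilon_0)\lvert\bar q-1/\gamma\rvert .
\]
Squaring, multiplying by the indicator, taking expectations, dropping the indicator and applying Lemma~\ref{lemma:MSE-bound} yields \eqref{eq:convergence-rate-hat-gamma}.
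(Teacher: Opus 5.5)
Your proof is correct and follows essentially the same route as the paper. Both use Chebyshev at threshold $\varepsilon_0/(\gamma(\gamma+\varepsilon_0))$, the identity $\lvert u-\gamma\rvert=\gamma u\lvert 1/u-1/\gamma\rvert$ with $u\le\gamma+\varepsilon_0$ on the good event, and Lemma~\ref{lemma:MSE-bound}; your caveat $\tau\ge 1$ is also used implicitly there. The only difference is bookkeeping: the paper handles the truncation in reciprocal space, writing $1/\hat\gamma=\bar q\wedge(1/\underline{\gamma})$ so that $\lvert 1/\hat\gamma-1/\gamma\rvert\le\lvert\bar q-1/\gamma\rvert$, and then applies the identity directly to $\hat\gamma$. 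This removes both of your case splits, including the truncation issue you flagged as the main obstacle.
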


\eqref{eq:probability-bad-event} characterizes the probability that the estimator $\hat{\gamma}$ in LTO algorithm falls outside $G_{\varepsilon_0}$. This probability decreases as $\tau$ increases. 
\eqref{eq:convergence-rate-hat-gamma} is the MSE bounds for the estimator $\hat{\gamma}$ in a neighborhood of $\gamma$.
Combining \eqref{eq:convergence-rate-hat-gamma}, Proposition~\ref{prop:local-lipschitz} and Proposition~\ref{prop:costgap}, we can directly bound the learning regret over $G_{\varepsilon_0}$. 

Notice that $\hat{\gamma}_k$ in \eqref{eq:hat-gamma-k} comes from the time average process over $[\tau_{k-1},\tau_k)$. Since $X_{k-1}$ and $\hat{r}_{k-1}$ are measurable with respect to $\mathcal{F}_{\tau_{k-1}}$, by Lemma~\ref{lemma:MSE-bound},
we obtain \[
\mathbb{E}_x\left[\left\lvert\frac{1}{\Delta_{\tau_{k-1}}}\int_{\tau_{k-1}}^{\tau_k}\left(Z_{\hat{r}_{k-1}}^{\tau_{k-1},X_{k-1}}(t)-\hat{r}_{k-1}\right)\mathrm{d}t-\frac{1}{\gamma}\right\rvert^2\bigg|\mathcal{F}_{\tau_{k-1}}\right]\leq \frac{C_3\left(1+\lvert X_{k-1}\rvert^4\right)}{\Delta\tau_{k-1}}.
\]
Similar to \eqref{eq:probability-bad-event} and \eqref{eq:convergence-rate-hat-gamma}, we also have the following lemma.

\begin{lemma}\label{lemma:learning-two-results-AU}
For $k\geq1$, the following hold:
\begin{subequations}
\begin{align}
\mathbb{P}_x(\hat{\gamma}_k\in G_{\varepsilon_0}^c|\mathcal{F}_{\tau_{k-1}})&\leq\frac{\gamma^2(\gamma+\varepsilon_0)^2C_3(1+\lvert X_{k-1}\rvert^4)}{\varepsilon_0^2\Delta\tau_{k-1}};\label{eq:probability-bad-event-AU}\\
\mathbb{E}_x\left[\left\lvert\hat{\gamma}_k-\gamma\right\rvert^2\mathbf{1}_{\{\hat{\gamma}_k \in G_{\varepsilon_0}\}}\big|\mathcal{F}_{\tau_{k-1}}\right]&\leq\frac{\gamma^2(\gamma+\varepsilon_0)^2C_3(1+\lvert X_{k-1}\rvert^4)}{\Delta \tau_{k-1}}\label{eq:convergence-rate-hat-gamma-AU}.
\end{align}
\end{subequations}
\end{lemma}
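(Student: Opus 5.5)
The plan is to reproduce the argument behind Lemma~\ref{lemma:learning-two-results}, now conditionally on $\mathcal{F}_{\tau_{k-1}}$. Write
\[
\hat{q}_k:=\frac{1}{\Delta\tau_{k-1}}\int_{\tau_{k-1}}^{\tau_k}\left(Z_{\hat{r}_{k-1}}^{\tau_{k-1},X_{k-1}}(t)-\hat{r}_{k-1}\right)\mathrm{d}t,
\]
so that $\hat{\gamma}_k=\hat{q}_k^{-1}\vee\underline{\gamma}$. The first step is the conditional MSE bound
\[
\mathbb{E}_x\left[\lvert\hat{q}_k-1/\gamma\rvert^2\,|\,\mathcal{F}_{\tau_{k-1}}\right]\leq \frac{C_3(1+\lvert X_{k-1}\rvert^4)}{\Delta\tau_{k-1}},
\]
which is already displayed just before the statement. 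To justify it, use Lemma~\ref{lemma:Markov-property-AU}, applied with $h$ replaced by the functional $\lvert\cdot-1/\gamma\rvert^2$ of the time average. On $[\tau_{k-1},\tau_k)$ the process, conditionally on $\mathcal{F}_{\tau_{k-1}}$, is an RBM started at the $\mathcal{F}_{\tau_{k-1}}$-measurable point $X_{k-1}$ with level $\hat{r}_{k-1}\in[-K_h,0]$, driven by the increments of $B$ after $\tau_{k-1}$, which are independent of $\mathcal{F}_{\tau_{k-1}}$ ($\tau_{k-1}$ is deterministic). Lemma~\ref{lemma:MSE-bound} is uniform in $r\in[-K_h,0]$ and $x\geq r$, and $\Delta\tau_{k-1}\geq1$. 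So the conditional bound follows by freezing $(X_{k-1},\hat{r}_{k-1})$ (a standard independence/substitution lemma).

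The second step is to convert deviations of $\hat{q}_k$ into deviations of $\hat{\gamma}_k$. I will show the deterministic inclusion: if $\lvert\hat{\gamma}_k-\gamma\rvert>\varepsilon_0$, then $\lvert\hat{q}_k-1/\gamma\rvert\geq \varepsilon_0/(\gamma(\gamma+\varepsilon_0))$. There are two cases.
\begin{itemize}
\item If $\hat{\gamma}_k>\gamma+\varepsilon_0$, the truncation is inactive, so $\hat{q}_k<1/(\gamma+\varepsilon_0)$ and $1/\gamma-\hat{q}_k>\varepsilon_0/(\gamma(\gamma+\varepsilon_0))$.
\item If $\hat{\gamma}_k<\gamma-\varepsilon_0$, either $\hat{\gamma}_k=\hat{q}_k^{-1}$, giving $\hat{q}_k-1/\gamma>\varepsilon_0/(\gamma(\gamma-\varepsilon_0))\geq \varepsilon_0/(\gamma(\gamma+\varepsilon_0))$; or $\hat{\gamma}_k=\underline{\gamma}$, which forces $\hat{q}_k^{-1}\leq\underline{\gamma}<\gamma-\varepsilon_0$, reducing to the same bound.
\end{itemize}
Note that the bad event $G_{\varepsilon_0}^c$ as a set of $u\geq\underline{\gamma}$ is just $\lvert u-\gamma\rvert>\varepsilon_0$. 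Conditional Chebyshev then gives \eqref{eq:probability-bad-event-AU}.

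The third step handles \eqref{eq:convergence-rate-hat-gamma-AU}. On $\{\hat{\gamma}_k\in G_{\varepsilon_0}\}$ we have $\hat{\gamma}_k\leq\gamma+\varepsilon_0$, and I will bound $\lvert\hat{\gamma}_k-\gamma\rvert\leq\gamma(\gamma+\varepsilon_0)\lvert\hat{q}_k-1/\gamma\rvert$. When the truncation is inactive this is the identity $\lvert \hat{q}_k^{-1}-\gamma\rvert=\hat{q}_k^{-1}\gamma\lvert\hat{q}_k-1/\gamma\rvert$. When $\hat{\gamma}_k=\underline{\gamma}\geq\hat{q}_k^{-1}$, we have $0\leq\gamma-\underline{\gamma}\leq\gamma-\hat{q}_k^{-1}=\gamma\hat{q}_k^{-1}(\hat{q}_k-1/\gamma)$ with $\hat{q}_k^{-1}\leq\underline{\gamma}\leq\gamma+\varepsilon_0$. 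Squaring, multiplying by the indicator, taking conditional expectation and applying step one yields the claim.

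The main obstacle is the rigorous justification of step one. The random level and start are $\mathcal{F}_{\tau_{k-1}}$-measurable, and the MSE functional involves the whole path rather than a fixed-time marginal, so Lemma~\ref{lemma:Markov-property-AU} as stated does not literally cover it. I would handle this by writing the path on $I_{k-1}$ as a measurable Skorokhod functional $\Phi(X_{k-1},\hat{r}_{k-1},\{B(\tau_{k-1}+s)-B(\tau_{k-1})\})$ and invoking the freezing lemma for independent arguments.
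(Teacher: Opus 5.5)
Your proposal is correct and follows the paper's route. The paper also obtains the conditional MSE bound from Lemma~\ref{lemma:MSE-bound} using the $\mathcal{F}_{\tau_{k-1}}$-measurability of $(X_{k-1},\hat r_{k-1})$, then repeats the proof of Lemma~\ref{lemma:learning-two-results} conditionally: Chebyshev with $\eta_{\varepsilon_0}=\varepsilon_0/(\gamma(\gamma+\varepsilon_0))$ for the bad event, and $\lvert\hat\gamma_k-\gamma\rvert\le\gamma(\gamma+\varepsilon_0)\lvert\hat q_k-1/\gamma\rvert$ on the good event. Your explicit case analysis of the truncation replaces the paper's observation that $\wedge\, 1/\underline{\gamma}$ can only move the time average closer to $1/\gamma$, and your freezing-lemma justification is more careful than the paper's one-line appeal, but the argument is the same.
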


To bound $\mathbb{E}_x\left[\lvert X_{k-1}\rvert^4\right],k\geq1$, we have the following lemma.
\begin{lemma}\label{lemma:bound-X-4}
    \begin{align}\label{eq:polynomial-bound-X}
    \mathbb{E}_x\left[\lvert X_{k-1}\rvert^4\right]\leq 2^3\left(K_h^4+2^6\left(x^4+\frac{4!}{\gamma^4}\right)\right).
\end{align}
\end{lemma}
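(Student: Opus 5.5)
The plan is to sandwich $X_{k-1}$ between a deterministic lower bound and the fixed-boundary RBM $Z_0$, and then apply Lemma~\ref{lemma:Zr-polynomial-bound} with $r=0$.

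\emph{Lower bound.} For $k-1=0$ we have $X_0=x\geq 0$. For $k-1\geq1$, the construction \eqref{eq:poststate} gives $X_{k-1}\geq \hat r_{k-1}$. Moreover $\hat r_{k-1}=r(\hat\gamma_{k-1})\in(-K_h,0)$ by Lemma~\ref{lemma:exist-unique-finite-solution}, since $\hat\gamma_{k-1}\geq\underline{\gamma}$. Hence $X_{k-1}\geq -K_h$ almost surely.

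\emph{Upper bound.} Corollary~\ref{Cor:1} gives $X_{k-1}\leq Z_0(\tau_{k-1})$ almost surely for $k-1\geq1$. For $k-1=0$ the same inequality holds trivially, because $Z_0(0)=x$.

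\emph{Combining the two.} Since $Z_0\geq 0$, the sandwich yields
\[
\lvert X_{k-1}\rvert\leq \max\{K_h, Z_0(\tau_{k-1})\}\leq K_h+Z_0(\tau_{k-1}).
\]
Convexity then gives $(a+b)^4\leq 2^3(a^4+b^4)$, so
\[
\mathbb{E}_x\left[\lvert X_{k-1}\rvert^4\right]\leq 2^3\left(K_h^4+\mathbb{E}_x\left[Z_0(\tau_{k-1})^4\right]\right).
\]
Because $\tau_{k-1}$ is deterministic, Lemma~\ref{lemma:Zr-polynomial-bound} with $r=0$ and $p=4$ bounds the last expectation by $2^{6}\left(x^4+4!/\gamma^4\right)$. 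This is exactly the claimed bound \eqref{eq:polynomial-bound-X}.

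The only step needing care is the lower bound. The post-update state can sit below $0$, so the negative part of $X_{k-1}$ must be controlled through $\hat r_{k-1}\geq -K_h$; Lemma~\ref{lemma:exist-unique-finite-solution} gives this uniformly because every estimate satisfies $\hat\gamma\geq\underline{\gamma}$. The same argument applies verbatim to $\tilde X_{k-1}$ in the AU-FH algorithm, using Remark~\ref{remark:AU-FH} in place of Corollary~\ref{Cor:1}.
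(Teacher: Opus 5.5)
Your proof is correct and follows the paper's argument: you sandwich $-K_h\leq X_{k-1}\leq Z_0(\tau_{k-1})$, apply $(a+b)^4\leq 2^3(a^4+b^4)$, and invoke Lemma~\ref{lemma:Zr-polynomial-bound} with $r=0$ and $p=4$. You also justify the lower bound explicitly through $X_{k-1}\geq \hat r_{k-1}>-K_h$ (Lemma~\ref{lemma:exist-unique-finite-solution}) and treat the case $k=1$ separately, details the paper leaves implicit in its citation of Corollary~\ref{Cor:1}.
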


Now we establish the MSE bound for $\tilde{q}_{k+1}^{\mathrm{FH}}$. By Remark~\ref{remark:AU-FH} and using results analogous to Lemma~\ref{lemma:MSE-bound} and Lemma~\ref{lemma:bound-X-4} for AU-FH, we can obtain the existence of constant $C_4>0$ (depending on $\theta$, $\sigma$, $K_h$, and $x$) such that for all $i\geq1$,
\begin{align}\label{eq:MSE-q-i}
    \mathbb{E}_x\left[\left(\tilde{q}_{i}-\frac{1}{\gamma}\right)^2\right]\leq\frac{C_4}{\Delta\tau_{i-1}}.
\end{align}
Combining \eqref{eq:MSE-q-i} and Minkowski inequality, we have the following lemma.
\begin{lemma}\label{lemma:MSE-bound-4-AUFH}
    There exists a constant $C_5>0$ (depending on $\theta$, $\sigma$, $K_h$, and $x$) such that for $k\geq 1$,
    \begin{align}\label{eq:MSE-q-FH}
    \mathbb{E}_x\left[\left(\hat{q}_{k}^{FH}-\frac{1}{\gamma}\right)^2\right]\leq\frac{C_5}{\tau_k}.
    \end{align}
\end{lemma}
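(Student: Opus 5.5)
The statement to prove is Lemma~\ref{lemma:MSE-bound-4-AUFH}. (It writes $\hat{q}_k^{FH}$, which we read as $\tilde{q}_k^{\mathrm{FH}}$.) The plan is to view $\tilde{q}_k^{\mathrm{FH}}-1/\gamma$ as a weighted average of the per-interval errors and apply Minkowski's inequality in $L^2(\mathbb{P}_x)$. By definition, $\sum_{i=0}^{k-1}\Delta\tau_i=\tau_k$. Hence
\[
\tilde{q}_k^{\mathrm{FH}}-\frac{1}{\gamma}=\frac{1}{\tau_k}\sum_{i=0}^{k-1}\Delta\tau_i\left(\tilde{q}_{i+1}-\frac{1}{\gamma}\right),
\]
and therefore
\[
\left\|\tilde{q}_k^{\mathrm{FH}}-\frac{1}{\gamma}\right\|_{L^2}\leq\frac{1}{\tau_k}\sum_{i=0}^{k-1}\Delta\tau_i\left\|\tilde{q}_{i+1}-\frac{1}{\gamma}\right\|_{L^2}\leq\frac{\sqrt{C_4}}{\tau_k}\sum_{i=0}^{k-1}\sqrt{\Delta\tau_i}.
\]
The last inequality uses \eqref{eq:MSE-q-i} with index $i+1$, which gives the bound $C_4/\Delta\tau_i$.

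Next, use the doubling structure $\Delta\tau_i=2^i$. This gives
\[
\sum_{i=0}^{k-1}2^{i/2}=\frac{2^{k/2}-1}{\sqrt2-1}\leq(\sqrt2+1)\,2^{k/2}.
\]
Since $\tau_k=2^k-1\geq 2^{k-1}$ for $k\geq1$, we have $2^{k/2}\leq\sqrt{2\tau_k}$. Hence the $L^2$ norm is at most $\sqrt{C_4}(\sqrt2+1)\sqrt{2}/\sqrt{\tau_k}$. Squaring yields \eqref{eq:MSE-q-FH} with $C_5=2(\sqrt2+1)^2C_4$.

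Minkowski's inequality is essential here. The errors $\tilde{q}_{i+1}-1/\gamma$ are not independent and are not martingale differences, because the reflection levels and initial states are history dependent. A crude Cauchy--Schwarz bound over the $k$ terms would lose a factor of order $k\sim\log\tau_k$. The geometric growth of $\Delta\tau_i$ makes the weighted $L^2$ sum dominated by its last term, so no correlation structure is needed.

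The real work lies in justifying \eqref{eq:MSE-q-i} uniformly in $i$, which the paper asserts by analogy. I would first condition on $\mathcal{F}_{\tau_{i-1}}$. The AU-FH analogue of Lemma~\ref{lemma:Markov-property-AU} shows that on $I_{i-1}$ the process is an RBM with $\mathcal{F}_{\tau_{i-1}}$-measurable level $\tilde{r}_{i-1}\in(-K_h,0)$ and initial state $\tilde{X}_{i-1}$. Lemma~\ref{lemma:MSE-bound} then bounds the conditional MSE by $C_3(1+|\tilde{X}_{i-1}|^4)/\Delta\tau_{i-1}$. Taking expectations, it remains to bound $\mathbb{E}_x|\tilde{X}_{i-1}|^4$ uniformly in $i$. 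For this I would use $\tilde{r}_{i-1}\leq\tilde{X}_{i-1}\leq Z_0(\tau_{i-1})$ from Remark~\ref{remark:AU-FH}, together with $|\tilde{r}_{i-1}|\leq K_h$ and Lemma~\ref{lemma:Zr-polynomial-bound} with $r=0$, exactly as in Lemma~\ref{lemma:bound-X-4}. This gives $C_4=C_3\bigl(1+2^3(K_h^4+2^6(x^4+4!/\gamma^4))\bigr)$, independent of $i$. The case $i=1$ corresponds to the deterministic start at $x$ with level $0$ and is covered directly.
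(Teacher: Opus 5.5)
Your proposal is correct and follows the paper's proof essentially verbatim. Like the paper, you write $\tilde{q}_k^{\mathrm{FH}}-1/\gamma$ as the $\Delta\tau_i/\tau_k$-weighted average of the per-interval errors, apply Minkowski's inequality in $L^2$ together with \eqref{eq:MSE-q-i}, and sum the geometric series $\sum_{i<k}2^{i/2}$, reaching the same constant $C_5=2C_4/(\sqrt2-1)^2=2(\sqrt2+1)^2C_4$. Your sketch of \eqref{eq:MSE-q-i} fills in what the paper only asserts by analogy: you condition on $\mathcal{F}_{\tau_{i-1}}$, apply Lemma~\ref{lemma:MSE-bound}, and use the domination $\tilde{X}_{i-1}\le Z_0(\tau_{i-1})$.
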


\subsection{Learning Regret Bound}\label{sec:5.3}
Combining the results in Sections~\ref{sec:5.1} and~\ref{sec:5.2}, we present the learning regret bounds for proposed algorithms in this subsection. Recall the learning regret in LTO algorithm
\[
R_{4}^{\mathrm{LTO}}=\mathbb{E}_x\left[\int_{\tau}^T \left(C(\gamma,\hat{r}_{\tau})-C(\gamma,r^*)\right)\mathrm{d}t \right].
\]

\begin{proposition}[Learning regret bound in LTO algorithm]\label{prop:learning-regret-bound-LTO}
    For $x\geq0$,
    \begin{align}\label{eq:R22-LTO-bound}
        R_{4}^{\mathrm{LTO}}\leq L_1 L_0^2(T-\tau)\frac{\gamma^2(\gamma+\varepsilon_0)^2C_3(1+x^4)}{\tau}+2M(T-\tau)\frac{\gamma^2(\gamma+\varepsilon_0)^2C_3(1+x^4)}{\varepsilon_0^2\tau}.
    \end{align}
\end{proposition}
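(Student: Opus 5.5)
The plan is to split the learning regret according to whether the estimator lands in the good event $G_{\varepsilon_0}$. Since $\hat{r}_\tau$ is deterministic given $\hat{\gamma}$ and the integrand does not depend on $t$, we have
\[
R_4^{\mathrm{LTO}}=(T-\tau)\,\mathbb{E}_x\left[C(\gamma,\hat r_\tau)-C(\gamma,r^*)\right].
\]
We write the expectation as the sum of $\mathbb{E}_x[(C(\gamma,\hat r_\tau)-C(\gamma,r^*))\mathbf{1}_{\{\hat\gamma\in G_{\varepsilon_0}\}}]$ and the corresponding term on $\{\hat\gamma\in G^c_{\varepsilon_0}\}$. Note that $\hat\gamma\geq\underline\gamma$ always holds, by the truncation in \eqref{eq:hat-gamma}.

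On the good event, $|\hat\gamma-\gamma|\le\varepsilon_0$ with $\hat\gamma\ge\underline\gamma$. There is a boundary issue here, since Proposition~\ref{prop:local-lipschitz} is stated on the open interval. We handle it either by slightly shrinking $\varepsilon_0$ in the definition of the good event, or by using continuity of $r(\cdot)$ to extend \eqref{eq:Lip-r-u} to the closed interval. With that in hand, we apply Proposition~\ref{prop:local-lipschitz} with $u_1=\hat\gamma$ and $u_2=\gamma$. Since $r(\gamma)=r^*$, this gives $\hat r_\tau\in[r^*-\delta_0,r^*+\delta_0]$ and $|\hat r_\tau-r^*|\le L_0|\hat\gamma-\gamma|$. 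Proposition~\ref{prop:costgap} then yields
\[
0\le C(\gamma,\hat r_\tau)-C(\gamma,r^*)\le L_1L_0^2|\hat\gamma-\gamma|^2 .
\]
Taking expectations on the good event and invoking \eqref{eq:convergence-rate-hat-gamma}, with $x\ge0$ so that $|x|^4=x^4$, produces the first term of \eqref{eq:R22-LTO-bound}.

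On the bad event we use the crude uniform bound. By Lemma~\ref{lemma:exist-unique-finite-solution}, $\hat r_\tau=r(\hat\gamma)\in(-K_h,0)$ for every value $\hat\gamma\ge\underline\gamma$. Also $r^*\in(-K_h,0)$. Lemma~\ref{lemma:finite-stationary-cost} then gives $C(\gamma,\hat r_\tau)\le M$ and $C(\gamma,r^*)\le M$, so the gap is at most $2M$. The bound $M$ suffices on its own, since the gap is nonnegative by optimality \eqref{eq:optimal-r-function}, but the looser constant $2M$ matches the statement. Multiplying by $\mathbb{P}_x(\hat\gamma\in G^c_{\varepsilon_0})$ and using \eqref{eq:probability-bad-event} gives the second term. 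Adding the two contributions and multiplying by $(T-\tau)$ completes the argument.

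The main obstacle is minor: making sure Proposition~\ref{prop:local-lipschitz} applies on the closed good event $|\hat\gamma-\gamma|\le\varepsilon_0$. As described above, this is handled by replacing $\varepsilon_0$ by a slightly smaller constant or by invoking continuity. Measurability of $\hat r_\tau$ is already given by Lemma~\ref{lemma:Markov-property-LTO}.
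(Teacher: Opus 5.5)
Your proposal is correct and follows the same route as the paper. You split on $\{\hat\gamma\in G_{\varepsilon_0}\}$, use Propositions~\ref{prop:local-lipschitz} and~\ref{prop:costgap} with \eqref{eq:convergence-rate-hat-gamma} on the good event, and use the uniform gap bound $2M$ from Lemma~\ref{lemma:finite-stationary-cost} with \eqref{eq:probability-bad-event} on the bad event. The paper passes silently over the open-versus-closed neighborhood mismatch you flag, and either of your fixes (continuity of $r(\cdot)$ up to the endpoints, or choosing $\varepsilon_0$ smaller from the outset) resolves it.
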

\begin{proof}
    Decompose the learning regret by event $\{\hat{\gamma}\in G_{\varepsilon_0}\}$, we have
    \begin{align*}
        R_{4}^{\mathrm{LTO}}&=\mathbb{E}_x\left[\int_{\tau}^T \left(C(\gamma,\hat{r}_{\tau})-C(\gamma,r^*)\right)\mathrm{d}t \right]\\
        &=\mathbb{E}_x\left[\int_{\tau}^T \left(C(\gamma,\hat{r}_{\tau})-C(\gamma,r^*)\right)\mathrm{d}t\cdot\mathbf{1}_{\left\{\hat{\gamma}\in G_{\varepsilon_0}\right\}} \right]+\mathbb{E}_x\left[\int_{\tau}^T \left(C(\gamma,\hat{r}_{\tau})-C(\gamma,r^*)\right)\mathrm{d}t \cdot\mathbf{1}_{\left\{\hat{\gamma}\in G_{\varepsilon_0}^c\right\}}\right].
    \end{align*}
    By Propositions~\ref{prop:local-lipschitz} and~\ref{prop:costgap} and \eqref{eq:convergence-rate-hat-gamma}, the first term is bounded by
    \[
    \int_{\tau}^T L_1 L_0^2\mathbb{E}_x\left[\left\lvert\hat{\gamma}-\gamma\right\rvert^2\mathbf{1}_{\{\hat{\gamma} \in G_{\varepsilon_0}\}}\right]\mathrm{d} t\leq L_1 L_0^2(T-\tau)\frac{\gamma^2(\gamma+\varepsilon_0)^2C_3(1+x^4)}{\tau}.
    \]
    By Lemma~\ref{lemma:finite-stationary-cost} and \eqref{eq:probability-bad-event}, the second term is bounded by
    \[
    \int_{\tau}^T2M \mathbb{P}_x(\hat{\gamma}\in G_{\varepsilon_0}^c)\mathrm{d}t\leq2M(T-\tau)\frac{\gamma^2(\gamma+\varepsilon_0)^2C_3(1+x^4)}{\varepsilon_0^2\tau}.
    \]
    Thus, we complete the proof.
\end{proof}

Recall the learning regret in AU algorithm
\[
R_{3}^{\mathrm{AU}}=\mathbb{E}_x\left[\sum_{k=1}^{N}\int_{\tau_k}^{\tau_{k+1}} \left\lvert C(\gamma,\hat{r}_k)-C(\gamma,r^*)\right\rvert\mathrm{d}t\right].
\]

\begin{proposition}[Learning regret bound in AU algorithm]\label{prop:learning-regret-bound-AU}
    \begin{align}\label{eq:R2-AU-bound}
        R_{3}^{\mathrm{AU}}\leq 2\gamma^2(\gamma+\varepsilon_0)^2C_3\left(1+2^3K_h^4+2^9\left(x^4+\frac{4!}{\gamma^4}\right)\right)\left(L_1L_0^2+\frac{2M}{\varepsilon_0^2}\right)\left(\log_2T+1\right).
    \end{align}
\end{proposition}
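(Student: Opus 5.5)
Following the proof of Proposition~\ref{prop:learning-regret-bound-LTO}, I will bound each interval separately and then sum over the $N\le\log_2(T+1)$ intervals. For each $k\in\{1,\dots,N\}$ the integrand $\lvert C(\gamma,\hat r_k)-C(\gamma,r^*)\rvert$ is constant on $[\tau_k,\tau_{k+1})$, an interval of length $\Delta\tau_k=2^k$. So the $k$-th summand equals $2^k\,\mathbb{E}_x\left[\lvert C(\gamma,\hat r_k)-C(\gamma,r^*)\rvert\right]$. I split this expectation on the event $\{\hat\gamma_k\in G_{\varepsilon_0}\}$ and its complement.

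\textbf{Good event.} On $\{\hat\gamma_k\in G_{\varepsilon_0}\}$, Proposition~\ref{prop:local-lipschitz} with $u_1=\hat\gamma_k$, $u_2=\gamma$ gives $\hat r_k\in(r^*-\delta_0,r^*+\delta_0)$ and $\lvert\hat r_k-r^*\rvert\le L_0\lvert\hat\gamma_k-\gamma\rvert$. Proposition~\ref{prop:costgap} then gives $0\le C(\gamma,\hat r_k)-C(\gamma,r^*)\le L_1L_0^2\lvert\hat\gamma_k-\gamma\rvert^2$. Conditioning on $\mathcal F_{\tau_{k-1}}$ and applying \eqref{eq:convergence-rate-hat-gamma-AU}, this part is at most $L_1L_0^2\gamma^2(\gamma+\varepsilon_0)^2C_3(1+\mathbb{E}_x\lvert X_{k-1}\rvert^4)/\Delta\tau_{k-1}$.

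\textbf{Bad event.} Since $\hat r_k\in(-K_h,0)$ by Lemma~\ref{lemma:exist-unique-finite-solution}, Lemma~\ref{lemma:finite-stationary-cost} gives $\lvert C(\gamma,\hat r_k)-C(\gamma,r^*)\rvert\le 2M$. Together with \eqref{eq:probability-bad-event-AU} and the tower property, this part is at most $2M\gamma^2(\gamma+\varepsilon_0)^2C_3(1+\mathbb{E}_x\lvert X_{k-1}\rvert^4)/(\varepsilon_0^2\Delta\tau_{k-1})$.

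\textbf{Summation.} I bound $\mathbb{E}_x\lvert X_{k-1}\rvert^4$ uniformly in $k$ by Lemma~\ref{lemma:bound-X-4}, which gives $1+\mathbb{E}_x\lvert X_{k-1}\rvert^4\le 1+2^3K_h^4+2^9(x^4+4!/\gamma^4)$. Since $\Delta\tau_k/\Delta\tau_{k-1}=2$, each summand is at most
\[
2\gamma^2(\gamma+\varepsilon_0)^2C_3\left(1+2^3K_h^4+2^9\left(x^4+\frac{4!}{\gamma^4}\right)\right)\left(L_1L_0^2+\frac{2M}{\varepsilon_0^2}\right).
\]
Summing over $k=1,\dots,N$, I use $N=\lfloor\log_2(T+1)\rfloor\le\log_2T+1$ for $T\ge1$, exactly as in Proposition~\ref{prop:transient-regret-bound-AU}, to get \eqref{eq:R2-AU-bound}.

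\textbf{Main obstacle.} The delicate step is the conditioning. The estimate $\hat\gamma_k$ is built from data on $[\tau_{k-1},\tau_k)$, which started from the random, history-dependent state $X_{k-1}$ and level $\hat r_{k-1}$. Both are $\mathcal F_{\tau_{k-1}}$-measurable by Lemma~\ref{lemma:Markov-property-AU}, so Lemma~\ref{lemma:learning-two-results-AU} applies conditionally; taking expectations then needs the fourth-moment bound on $X_{k-1}$. That bound comes from the domination $X_{k-1}\le Z_0(\tau_{k-1})$ in Corollary~\ref{Cor:1}, via Lemma~\ref{lemma:bound-X-4}. For $k=1$, $X_0=x$ is deterministic and the bound holds trivially.
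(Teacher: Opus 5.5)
Your proposal is correct and follows essentially the same route as the paper. You condition on $\mathcal{F}_{\tau_{k-1}}$ and split on $\{\hat\gamma_k\in G_{\varepsilon_0}\}$, using Propositions~\ref{prop:local-lipschitz}--\ref{prop:costgap} with \eqref{eq:convergence-rate-hat-gamma-AU} on the good event and the $2M$ bound with \eqref{eq:probability-bad-event-AU} on its complement, then close with Lemma~\ref{lemma:bound-X-4}, $\Delta\tau_k/\Delta\tau_{k-1}=2$, and $N\le\log_2T+1$, which reproduces the constant in \eqref{eq:R2-AU-bound} exactly.
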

\begin{proof}
    Decomposing the learning regret by event $\{\hat{\gamma}_k\in G_{\varepsilon_0}\}$, we have for $k\geq 1$,
    \begin{align*}
        \mathbb{E}_x\left[\int_{\tau_k}^{\tau_{k+1}} \left\lvert C(\gamma,\hat{r}_k)-C(\gamma,r^*)\right\rvert\mathrm{d}t\right]&=\mathbb{E}_x\left[\int_{\tau_k}^{\tau_{k+1}} \left\lvert C(\gamma,\hat{r}_k)-C(\gamma,r^*)\right\rvert\mathrm{d}t\cdot\mathbf{1}_{\left\{\hat{\gamma}_k\in G_{\varepsilon_0}\right\}} \right]\\
        &\quad+\mathbb{E}_x\left[\int_{\tau_k}^{\tau_{k+1}} \left\lvert C(\gamma,\hat{r}_k)-C(\gamma,r^*)\right\rvert\mathrm{d}t\cdot\mathbf{1}_{\left\{\hat{\gamma}_k\in G_{\varepsilon_0}^c\right\}} \right]\,. 
    \end{align*}
    By the tower property, the first term equals
    \begin{align*}
        \mathbb{E}_x\left[\mathbb{E}_x\left[\int_{\tau_k}^{\tau_{k+1}} \left\lvert C(\gamma,\hat{r}_k)-C(\gamma,r^*)\right\rvert\mathrm{d}t\cdot\mathbf{1}_{\left\{\hat{\gamma}_k\in G_{\varepsilon_0}\right\}}\bigg|\mathcal{F}_{\tau_{k-1}} \right]\right]\,.
    \end{align*}
    By Proposition~\ref{prop:local-lipschitz} and~\ref{prop:costgap}, \eqref{eq:convergence-rate-hat-gamma-AU} and \eqref{eq:polynomial-bound-X}, this can be bounded by
    \begin{align*}
   &  \int_{\tau_k}^{\tau_{k+1}} L_1 L_0^2\mathbb{E}_x\left[\mathbb{E}_x\left[\left\lvert\hat{\gamma}_k-\gamma\right\rvert^2\mathbf{1}_{\{\hat{\gamma}_k \in G_{\varepsilon_0}\}}\big|\mathcal{F}_{\tau_{k-1}}\right]\right]\mathrm{d} t \\
   &\leq L_1 L_0^2\Delta\tau_k\frac{\gamma^2(\gamma+\varepsilon_0)^2C_3\left(1+\mathbb{E}_x\left[\left\lvert X_{k-1}\right\rvert^4\right]\right)}{\Delta\tau_{k-1}}\\
    &\leq2L_1 L_0^2\gamma^2(\gamma+\varepsilon_0)^2C_3\left(1+2^3K_h^4+2^9\left(x^4+\frac{4!}{\gamma^4}\right)\right)\,.
    \end{align*}

    The second term equals
    \begin{align*}
        \mathbb{E}_x\left[\mathbb{E}_x\left[\int_{\tau_k}^{\tau_{k+1}} \left\lvert C(\gamma,\hat{r}_k)-C(\gamma,r^*)\right\rvert\mathrm{d}t\cdot\mathbf{1}_{\left\{\hat{\gamma}_k\in G_{\varepsilon_0}^c\right\}}\bigg|\mathcal{F}_{\tau_{k-1}} \right]\right]\,. 
    \end{align*}
    By   Lemma~\ref{lemma:finite-stationary-cost} and \eqref{eq:probability-bad-event-AU}, it can be bounded by
    \begin{align*}
    \int_{\tau_k}^{\tau_{k+1}} 2M\mathbb{E}_x\left[\mathbb{P}_x\left(\hat{\gamma}_k \in G_{\varepsilon_0}^c|\mathcal{F}_{\tau_{k-1}}\right)\right]\mathrm{d} t&\leq 2M\Delta\tau_k\frac{\gamma^2(\gamma+\varepsilon_0)^2C_3\left(1+\mathbb{E}_x\left[\left\lvert X_{k-1}\right\rvert^4\right]\right)}{\varepsilon_0^2\Delta\tau_{k-1}}\\
    &\leq\frac{4M}{\varepsilon_0^2}\gamma^2(\gamma+\varepsilon_0)^2C_3\left(1+2^3K_h^4+2^9\left(x^4+\frac{4!}{\gamma^4}\right)\right)\,. 
    \end{align*}
 
    Summing over $k=1$ to $N$, we have
    \begin{align*}
        R_{3}^{\mathrm{AU}}&=\mathbb{E}_x\left[\sum_{k=1}^{N}\int_{\tau_k}^{\tau_{k+1}} \left\lvert C(\gamma,\hat{r}_k)-C(\gamma,r^*)\right\rvert\mathrm{d}t\right]\nonumber\\
        &\leq 2\gamma^2(\gamma+\varepsilon_0)^2C_3\left(1+2^3K_h^4+2^9\left(x^4+\frac{4!}{\gamma^4}\right)\right)\left(L_1L_0^2+\frac{2M}{\varepsilon_0^2}\right)N\\
        &\leq 2\gamma^2(\gamma+\varepsilon_0)^2C_3\left(1+2^3K_h^4+2^9\left(x^4+\frac{4!}{\gamma^4}\right)\right)\left(L_1L_0^2+\frac{2M}{\varepsilon_0^2}\right)\left(\log_2T+1\right),
    \end{align*}
    where the last inequality follows from $N=\lfloor \log_2 (T+1)\rfloor\leq  \log_2 (T+1)\leq \log_2T+1$.
\end{proof}
For the AU-FH algorithm, Lemma~\ref{lemma:MSE-bound-4-AUFH}, together with the same good event decomposition argument as in Proposition~\ref{prop:learning-regret-bound-AU}, implies that the expected learning regret over each interval $I_k$ is bounded by a constant multiple of $\frac{\Delta\tau_k}{\tau_k}\leq 2$. Summing over the updating intervals, we obtain the following result.
\begin{proposition}[Learning regret bound in AU-FH algorithm]\label{prop:learning-regret-bound-AUFH}
\begin{align}\label{eq:R2-AUFH-bound}
R_{3}^{\mathrm{AU\text{-}\mathrm{FH}}}
\leq
2\gamma^2(\gamma+\varepsilon_0)^2C_5
\left(
L_1L_0^2+\frac{2M}{\varepsilon_0^2}
\right)
\left(\log_2T+1\right).
\end{align}
\end{proposition}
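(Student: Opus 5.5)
The plan follows the proof of Proposition~\ref{prop:learning-regret-bound-AU}, with Lemma~\ref{lemma:MSE-bound-4-AUFH} replacing the conditional bounds of Lemma~\ref{lemma:learning-two-results-AU}. Fix $k\ge1$ and split the interval-$k$ learning regret according to whether $\tilde\gamma_k\in G_{\varepsilon_0}$:
\[
\mathbb{E}_x\left[\int_{\tau_k}^{\tau_{k+1}}\left\lvert C(\gamma,\tilde r_k)-C(\gamma,r^*)\right\rvert\mathrm{d}t\right]\le \Delta\tau_k\Big(L_1L_0^2\,\mathbb{E}_x\big[\lvert\tilde\gamma_k-\gamma\rvert^2\mathbf 1_{\{\tilde\gamma_k\in G_{\varepsilon_0}\}}\big]+2M\,\mathbb{P}_x(\tilde\gamma_k\in G^c_{\varepsilon_0})\Big).
\]
On the good event, Propositions~\ref{prop:local-lipschitz} and~\ref{prop:costgap} give $C(\gamma,\tilde r_k)-C(\gamma,r^*)\le L_1(\tilde r_k-r^*)^2\le L_1L_0^2(\tilde\gamma_k-\gamma)^2$. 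Here we use $r^*=r(\gamma)$ and that $r(\tilde\gamma_k)$ lies in the $\delta_0$-neighbourhood of $r^*$. On the bad event, Lemma~\ref{lemma:exist-unique-finite-solution} gives $\tilde r_k\in(-K_h,0)$, and then Lemma~\ref{lemma:finite-stationary-cost} bounds the gap by $2M$. Unlike the AU case, no conditioning on $\mathcal F_{\tau_{k-1}}$ is needed, because Lemma~\ref{lemma:MSE-bound-4-AUFH} already gives an unconditional MSE bound for $\tilde q_k^{\mathrm{FH}}$.

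Next, convert the MSE of $\tilde q_k^{\mathrm{FH}}$ into bounds on $\tilde\gamma_k$, exactly as in the proof of Lemma~\ref{lemma:learning-two-results}.

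\textbf{Good event.} Since $\gamma\ge\underline\gamma$, the truncation $\vee\underline\gamma$ only moves the estimator toward $\gamma$, so $\lvert\tilde\gamma_k-\gamma\rvert\le\lvert 1/\tilde q_k^{\mathrm{FH}}-\gamma\rvert$. On $\{\tilde\gamma_k\in G_{\varepsilon_0}\}$ we have $\tilde\gamma_k\le\gamma+\varepsilon_0$. If the truncation is inactive, $\tilde\gamma_k=1/\tilde q_k^{\mathrm{FH}}$, and
\[
\left\lvert \tfrac{1}{\tilde q_k^{\mathrm{FH}}}-\gamma\right\rvert=\gamma\,\tilde\gamma_k\left\lvert \tilde q_k^{\mathrm{FH}}-\tfrac1\gamma\right\rvert\le\gamma(\gamma+\varepsilon_0)\left\lvert \tilde q_k^{\mathrm{FH}}-\tfrac1\gamma\right\rvert .
\]
If the truncation is active, then $1/\tilde q_k^{\mathrm{FH}}<\underline\gamma\le\gamma$, and the same inequality holds because $\tilde\gamma_k\ge 1/\tilde q_k^{\mathrm{FH}}$. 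Squaring and taking expectations gives
\[
\mathbb{E}_x\big[\lvert\tilde\gamma_k-\gamma\rvert^2\mathbf 1_{\{\tilde\gamma_k\in G_{\varepsilon_0}\}}\big]\le \gamma^2(\gamma+\varepsilon_0)^2C_5/\tau_k .
\]

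\textbf{Bad event.} If $\lvert\tilde\gamma_k-\gamma\rvert>\varepsilon_0$, then either $1/\tilde q_k^{\mathrm{FH}}>\gamma+\varepsilon_0$ or $1/\tilde q_k^{\mathrm{FH}}<\gamma-\varepsilon_0$. Either case forces
\[
\left\lvert\tilde q_k^{\mathrm{FH}}-\tfrac1\gamma\right\rvert\ge \frac{\varepsilon_0}{\gamma(\gamma+\varepsilon_0)}.
\]
This is the step needing the most care with the $\vee\underline\gamma$ truncation and the two directions. I will mirror the argument already used for \eqref{eq:probability-bad-event}. Chebyshev's inequality then yields
\[
\mathbb{P}_x(\tilde\gamma_k\in G^c_{\varepsilon_0})\le \gamma^2(\gamma+\varepsilon_0)^2C_5/(\varepsilon_0^2\tau_k).
\]

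Combining, the interval-$k$ term is at most
\[
\gamma^2(\gamma+\varepsilon_0)^2C_5\left(L_1L_0^2+\tfrac{2M}{\varepsilon_0^2}\right)\frac{\Delta\tau_k}{\tau_k}.
\]
Since $\Delta\tau_k/\tau_k=2^k/(2^k-1)\le2$ for $k\ge1$, each term is bounded by twice the constant. Summing over $k=1,\dots,N$ with $N=\lfloor\log_2(T+1)\rfloor\le\log_2T+1$ gives \eqref{eq:R2-AUFH-bound}.

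I expect the main obstacle to be justifying Lemma~\ref{lemma:MSE-bound-4-AUFH} itself, which we are allowed to assume here. With it in hand, the only delicate point is the deterministic comparison between the truncated reciprocal and $\tilde q_k^{\mathrm{FH}}$ above.
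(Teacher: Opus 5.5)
Your proposal is correct and follows the paper's argument. The paper only sketches this proposition: it invokes the good-event decomposition from Proposition~\ref{prop:learning-regret-bound-AU}, Lemma~\ref{lemma:MSE-bound-4-AUFH} and the bound $\Delta\tau_k/\tau_k\le 2$. You have filled in those steps, including the conversion from $\tilde q_k^{\mathrm{FH}}$ to $\tilde\gamma_k$ as in Lemma~\ref{lemma:learning-two-results}, and you recover exactly the stated constant.
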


\section{Proofs of the Main Results}\label{sec:6}
\begin{proof}[Proof of Theorem~\ref{thm:LTO4generalh}]
    By Lemma~\ref{lemma:finite-stationary-cost}, the exploration regret satisfies
    \[
    R_{2}^{\mathrm{LTO}}=\mathbb{E}_x\left[\int_0^{\tau} \left(C(\gamma,0)-C(\gamma,r^*)\right)\mathrm{d}t \right]\leq 2M\tau.
    \]
    The bounds for the terms $R_{1}^{\mathrm{LTO}}$, $R_{3}^{\mathrm{LTO}}$ and $R_{5}^{\mathrm{LTO}}$ can be found in Proposition~\ref{prop:transient-regret-bound-LTO}. $R_4^{\mathrm{LTO}}$ is bounded in Proposition~\ref{prop:learning-regret-bound-LTO}.
    Combining the above results, we have
    \begin{align*}
        R^{\mathrm{LTO}}(x,T)&\leq \frac{8C_h\left(2+e^{\underline{\gamma}x/2}\right)}{\sigma^2\underline{\gamma}(2\gamma-\underline{\gamma})}+2M\tau+\frac{8C_h\left(2+e^{\underline{\gamma}K_h/2}\left(4+e^{\underline{\gamma}x/2}\right)\right)}{\sigma^2\underline{\gamma}(2\gamma-\underline{\gamma})}+\frac{8C_h\left(2+e^{\underline{\gamma}(x+K_h)/2}\right)}{\sigma^2\underline{\gamma}(2\gamma-\underline{\gamma})}\\
        &\quad+L_1 L_0^2(T-\tau)\frac{\gamma^2(\gamma+\varepsilon_0)^2C_3(1+x^4)}{\tau}+2M(T-\tau)\frac{\gamma^2(\gamma+\varepsilon_0)^2C_3(1+x^4)}{\varepsilon_0^2\tau}.
    \end{align*}
    Rearranging the terms in $R^{\mathrm{LTO}}(x,T)$, we find that  there exist constants $C_6,C_7,C_8>0$ such that
    \[
    R^{\mathrm{LTO}}(x,T)\leq C_6+C_7\tau+C_8\frac{T}{\tau}.
    \]
    To minimize the regret bound, we optimize over $\tau$: the term  $C_7\tau + C_8\frac{T}{\tau}$ is minimized when $\tau = \sqrt{T}$, leading to the best regret order $O(\sqrt{T})$. Equivalently, there exists a constant $C_1>0$ such that
    \[
    R^{\mathrm{LTO}}(x,T)\leq C_1\left(1+\sqrt{T}\right). \qedhere
    \]
\end{proof}

\begin{proof}[Proof of Theorem~\ref{thm:AU4generalh}]
    For exploration regret $R_{2}^{\mathrm{AU}}$, we have
    \begin{align}\label{eq:exploration-regret-bound-AU}
    R_{2}^{\mathrm{AU}}=\mathbb{E}_x\left[\int_{\tau_0}^{\tau_{1}} \left\lvert C(\gamma,\hat{r}_0)-C(\gamma,r^*)\right\rvert\mathrm{d}t\right]\leq 2M\Delta\tau_0=2M.
    \end{align}

Recalling the results in Proposition~\ref{prop:transient-regret-bound-AU} and Proposition~\ref{prop:learning-regret-bound-AU}, we have the existence of $C_2>0$ such that
\begin{align*}
    R^{\mathrm{AU}}(x,T)&\leq \frac{8C_h \left(2+e^{\underline{\gamma} K_h/2}\left(4+e^{\underline{\gamma}x/2}\right)\right)}{\sigma^2\underline{\gamma}(2\gamma-\underline{\gamma})}\left(\log_2 T +2\right)+2M\\
    &\quad+2\gamma^2(\gamma+\varepsilon_0)^2C_3\left(1+2^3K_h^4+2^9\left(x^4+\frac{4!}{\gamma^4}\right)\right)\left(L_1L_0^2+\frac{2M}{\varepsilon_0^2}\right)\left(\log_2T+1\right)\\
    &\quad+ \frac{8C_h\left(2+e^{\underline{\gamma}(x+K_h)/2}\right)}{\sigma^2\underline{\gamma}(2\gamma-\underline{\gamma})}\\
    &\leq C_2(1+\log T).
\end{align*}
For AU-FH algorithm, since $R_2^{\mathrm{AU}\text{-}\mathrm{FH}}\leq 2M$, combining Proposition~\ref{prop:transient-regret-bound-AU} and Proposition~\ref{prop:learning-regret-bound-AUFH}, we have the existence of $C_3>0$ such that
\[
R^{\mathrm{AU}\text{-}\mathrm{FH}}(x,T)\leq C_3(1+\log T).
\]
This completes the proof.
\end{proof}

\section{Simulation-Based Numerical Results}\label{sec:7}
In this section, we conduct simulation experiments to illustrate the finite-time performance of the proposed learning algorithms.  
We first compare the expected finite-time regret of the three learning algorithms with a benchmark RL algorithm (REINFORCE) under four representative holding cost functions including absolute-value, quadratic, and exponential functions.
Then, we study the effect of diffusion scale on AU regret.

For four holding cost functions, we obtain the corresponding  optimal reflection levels as follows:
\begin{enumerate}[$(i)$]
    \item $h(x)=\lvert x\rvert$: in this case $C(u,r)=\frac{2 e^{ur}}{u}-r-\frac{1}{u}$ and $r(u)=-\frac{\log 2}{u}$.
    \item $h(x)=x^2$: in this case $C(u,r)=r^2+\frac{2r}{u}+\frac{2}{u^2}$ and $r(u)=-\frac{1}{u}$.
    \item $h(x)=e^{b\lvert x\rvert},0<b<\underline{\gamma}/2$: in this case $C(u,r)=\frac{u}{u+b}e^{-br}+\left(\frac{u}{u-b}-\frac{u}{u+b}\right)e^{ur}$ and $r(u)=\log\left(\frac{1}{2}-\frac{b}{2u}\right)/(u+b)$.
    \item $h(x)=1-e^{-\lvert x\rvert}$: in this case $C(u,r)=1+\frac{2u}{u^2-1}e^{ur}-\frac{u}{u-1}e^r$ for $u\neq1$, $C(1,r)=1+(r-\frac{1}{2})e^r$ and $r(u)=\frac{1}{1-u}\log \left(\frac{2u}{u+1}\right)$ for $u\neq1$, $r(1)=-\frac{1}{2}$.
\end{enumerate}

Unless otherwise specified, we set the initial state to $x=0.2$. The true parameters are chosen as $\theta=-1,\sigma=1$ so that $\gamma=2$. We choose $b=0.5$ in the exponential holding cost function, and the prior lower bound is set to $\underline\gamma=1.1$ to ensure $b<\underline{\gamma}/2$. For other cost functions, we set $\underline{\gamma}=0.1$.

\subsection{Expected Regret Demonstration and Benchmark Comparison}\label{sec:7.1}
For each cost function, we simulate $2000$ independent Brownian sample paths over the time horizon $T=500$. We compute the regret processes of the LTO, AU and AU-FH algorithms and then average over all replications to estimate the expected regret. We also adopted REINFORCE, a classical model-free policy-gradient RL algorithm, as a benchmark (see Appendix~\ref{app:C.2} for details). Figures~\ref{fig:regret-abs-2000},~\ref{fig:regret-qua-2000}, \ref{fig:regret-exp-2000} and~\ref{fig:regret-bound-2000} report the cumulative regret curves.

The simulation results are qualitatively consistent with the theoretical bounds. The expected regret of the LTO algorithm exhibits a square-root growth pattern, while the expected regrets of the AU and AU-FH algorithms grow much more slowly, in line with the logarithmic regret bound.

The comparison highlights the benefit of exploiting the problem structure in algorithm design: our model-based algorithms consistently outperform the generic REINFORCE benchmark. The improvement of AU over LTO reflects the advantage of repeatedly updating the estimator using informative data. Moreover, AU-FH further yields a slight performance improvement over AU by incorporating the full history of observations into parameter estimation.

\begin{figure}[htbp]
    \centering
    \begin{subfigure}[b]{0.45\textwidth}
        \centering
        \includegraphics[width=\textwidth]{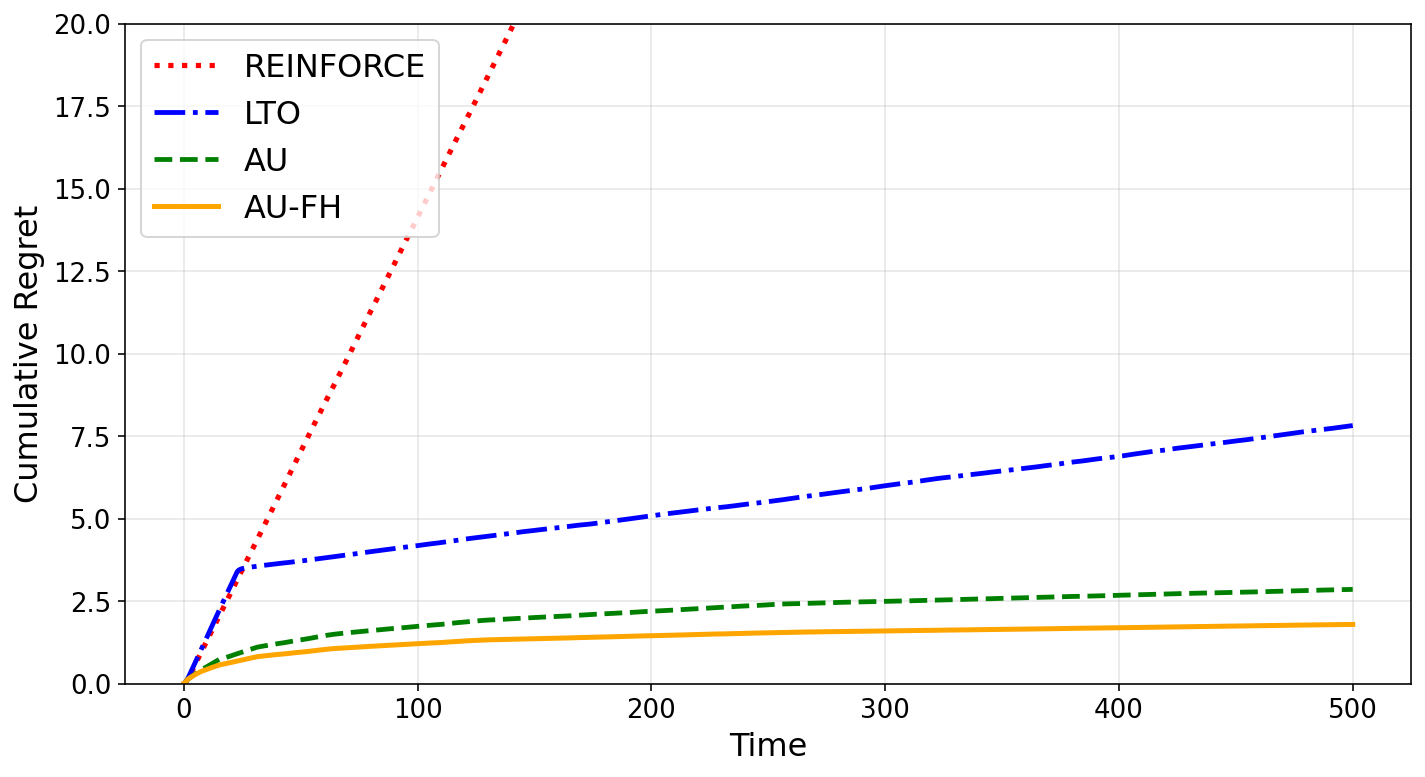}
        \caption{$h(x)=\lvert x\rvert$}
        \label{fig:regret-abs-2000}
    \end{subfigure}
    \hfill
    \begin{subfigure}[b]{0.45\textwidth}
        \centering
        \includegraphics[width=\textwidth]{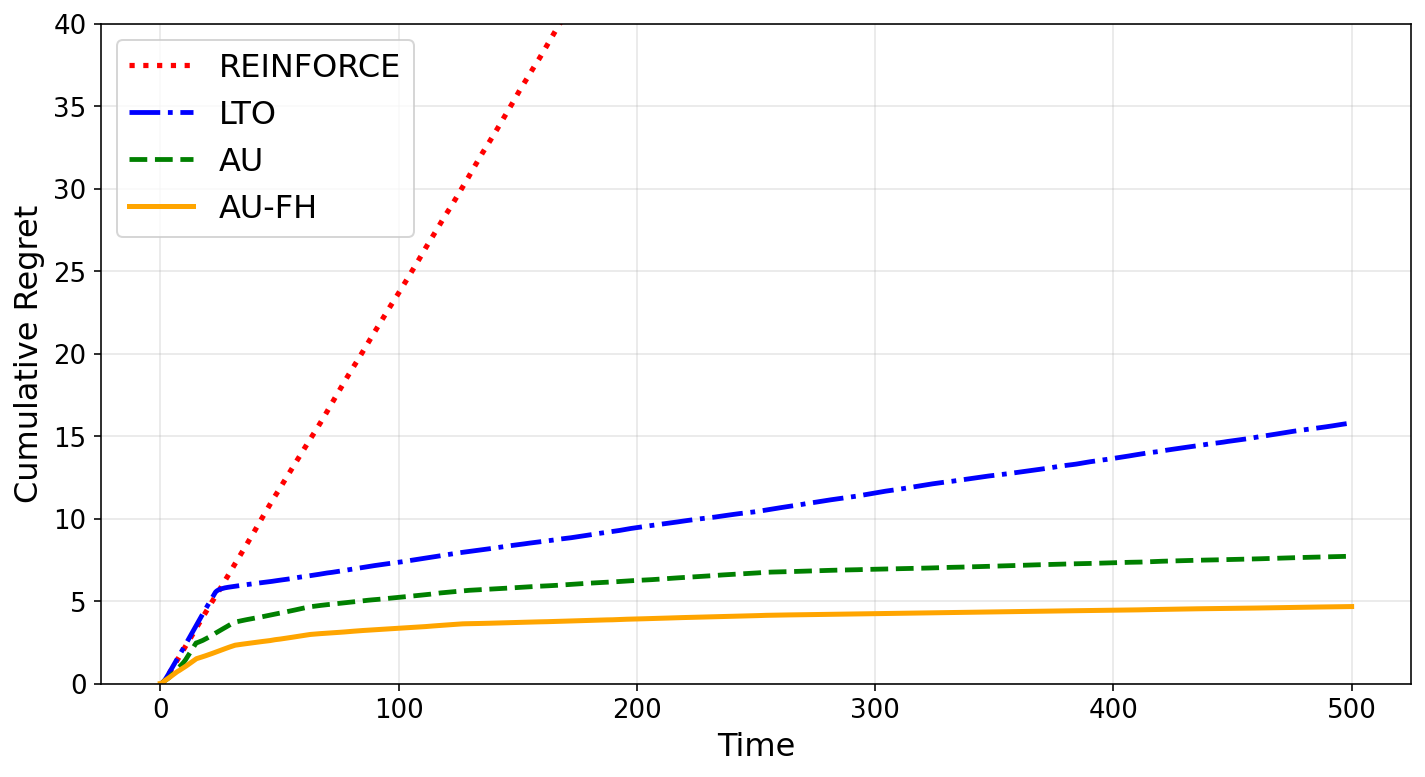}
        \caption{$h(x)=x^2$}
        \label{fig:regret-qua-2000}
    \end{subfigure} 
    \\
    \begin{subfigure}[b]{0.45\textwidth}
        \centering
        \includegraphics[width=\textwidth]{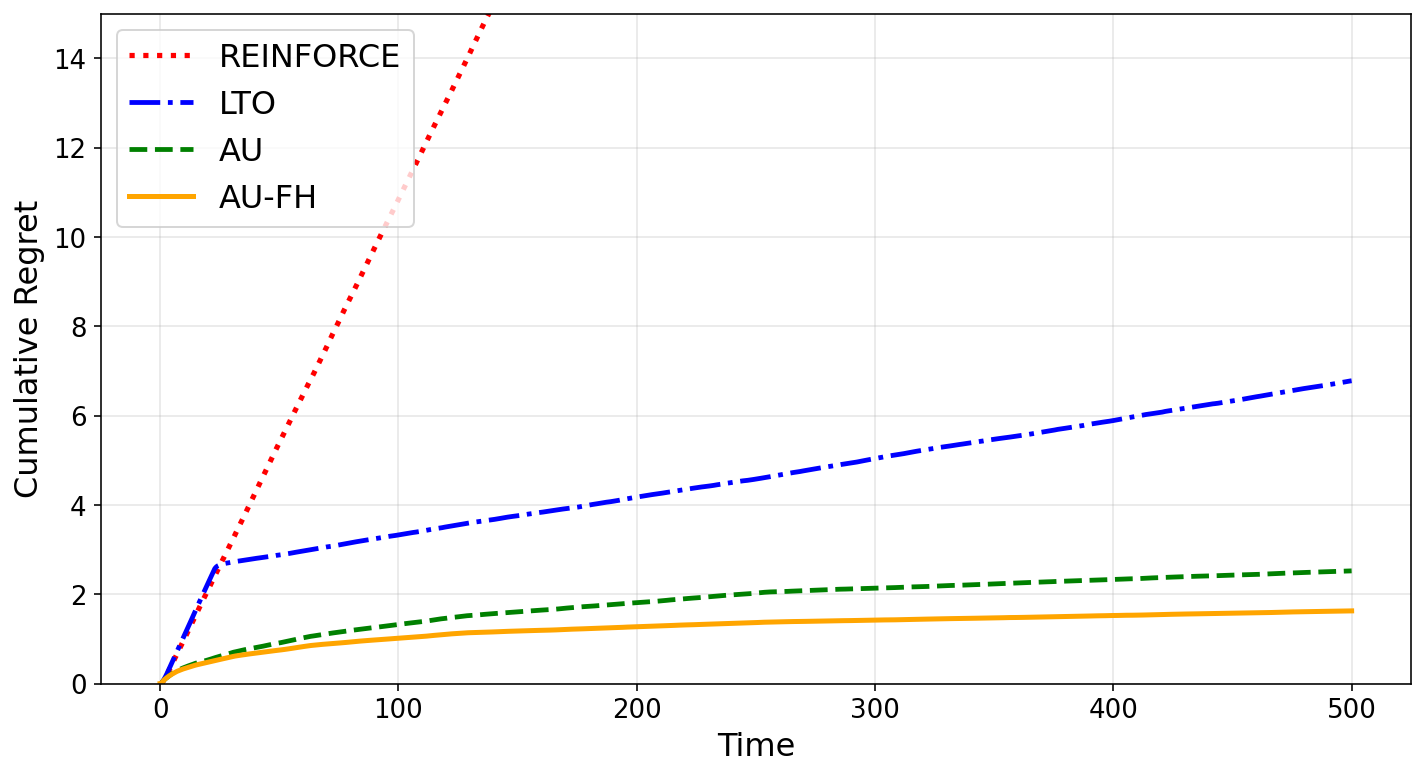}
        \caption{$h(x)=e^{0.5\lvert x\rvert}$}
        \label{fig:regret-exp-2000}
    \end{subfigure}
    \hfill
    \begin{subfigure}[b]{0.45\textwidth}
        \centering
        \includegraphics[width=\textwidth]{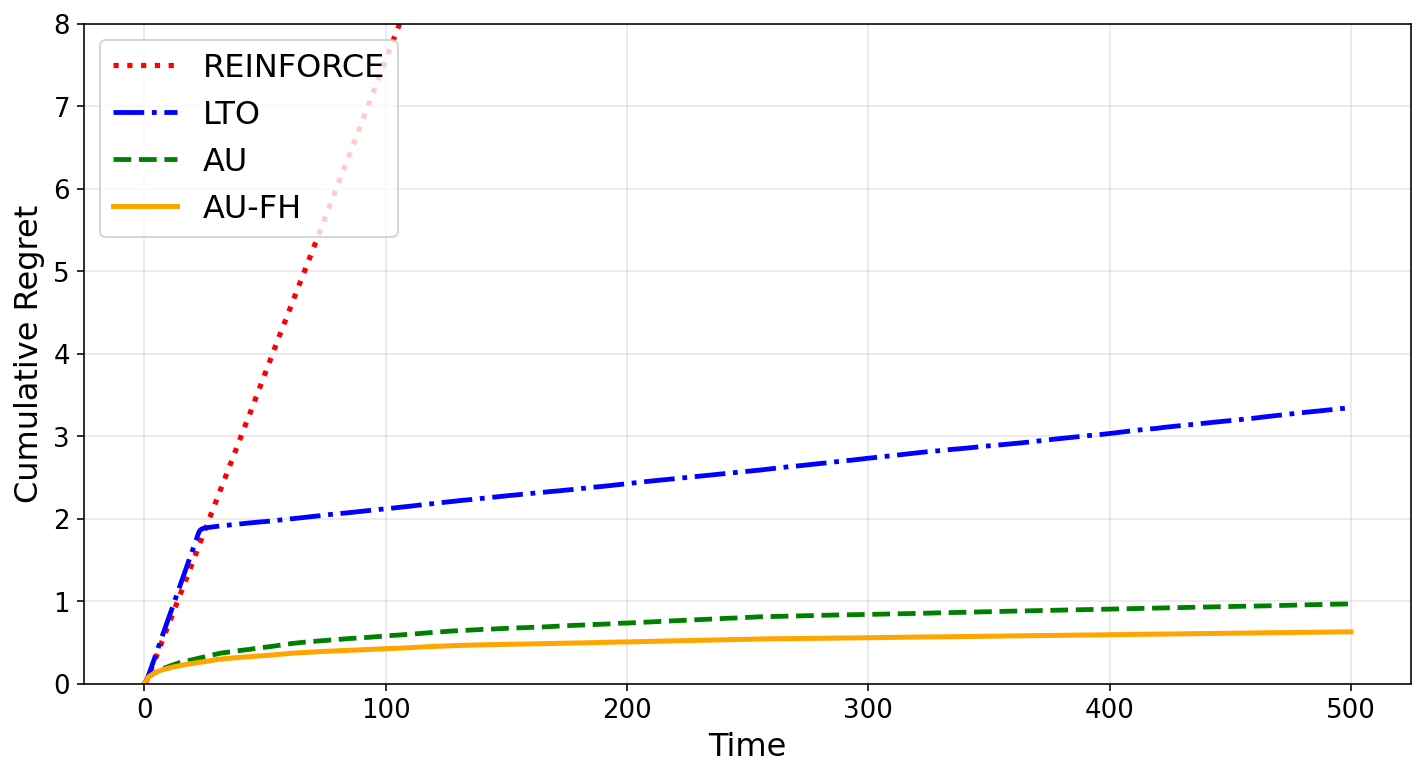}
        \caption{$h(x)=1-e^{-\lvert x\rvert}$}
        \label{fig:regret-bound-2000}
    \end{subfigure}
    \caption{Regret comparison of LTO, AU, and AU-FH algorithm  with REINFORCE algorithm under various cost functions}
\end{figure}

\subsection{Impact of Diffusion Scale on AU Regret}\label{sec:7.2}
To further investigate how the diffusion scale influences the finite-time learning performance of the AU algorithm, we vary $\sigma$ and adjust $\theta$ accordingly while fixing the policy-relevant parameter $\gamma = -2\theta/\sigma^2$. Under this setup, both the long-run average holding cost and the full-information optimal reflecting level remain unchanged---this design allows us to isolate the impact of diffusion scale variations on learning performance against a consistent benchmark.

\begin{figure}[htbp]
    \centering
    \includegraphics[scale=0.45]{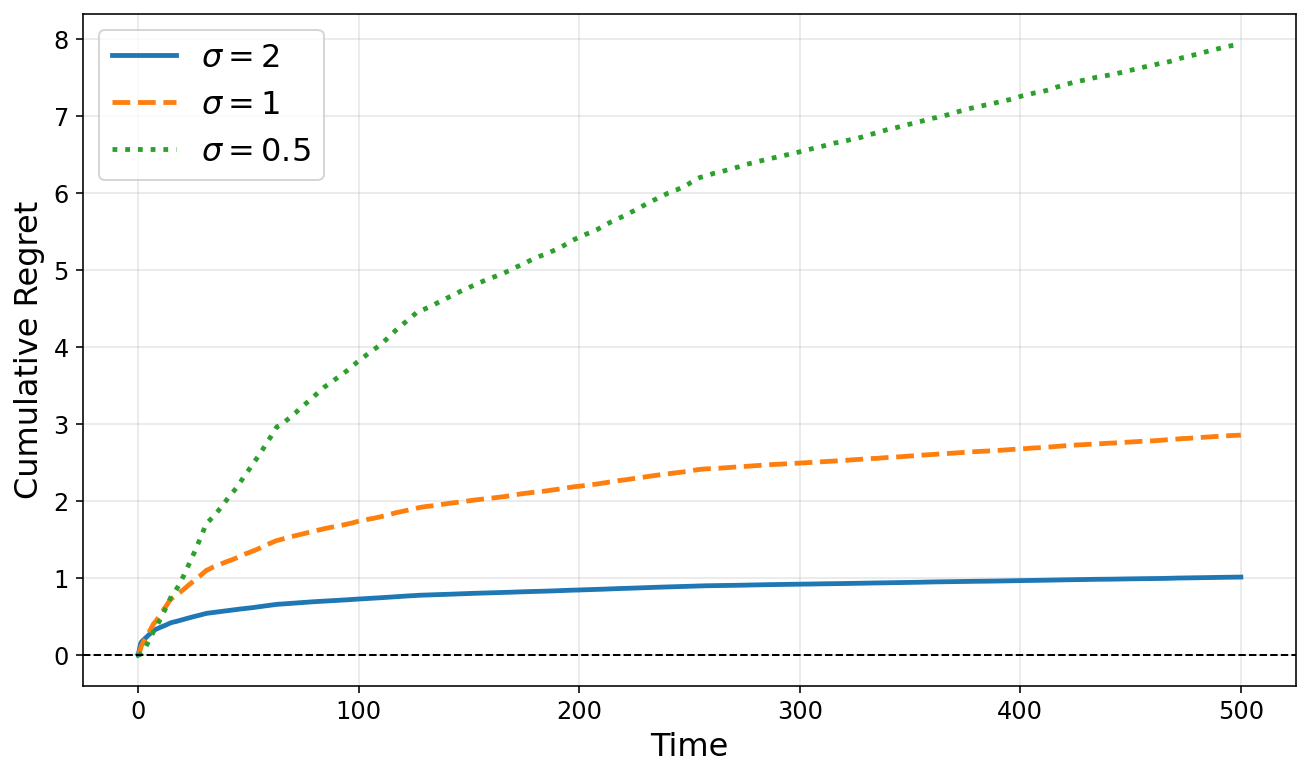}
    \caption{Effect of diffusion scale on AU regret}
    \label{fig:change-sigma-abs}
\end{figure}

Figure~\ref{fig:change-sigma-abs} compares the expected regret of the AU algorithm under several parameter settings with the same value $\gamma=2$ for $h(x)=|x|$ (The figures for other cost functions and their impact on AU-FH regret can be found in Appendix~\ref{app:C.4}). The results show that the expected regret decreases more rapidly as $\sigma$ increases.
This phenomenon can be understood through the generator of the RBM in \eqref{eq:generator}. 
We have
\[
\Gamma_{\sigma} f(z)=\sigma^2\Gamma_1^{(\gamma)}f(z), \quad \text{with}\quad  \Gamma_1^{(\gamma)}f(z)=\frac{1}{2}\left(f^{\prime\prime}(z)-\gamma f^{\prime}(z)\right).
\]
Hence, for fixed $\gamma$, increasing $\sigma$ accelerates the intrinsic time scale of the reflected diffusion by a factor  $\sigma^2$. Over the same physical time horizon, the process converges faster to the stationary distribution and the time-average estimator of $\gamma$ is based on more informative observations. This leads to faster stabilization of the AU policy and smaller finite-time expected regret. We also plot the impact of varying only $\sigma$ on the optimal reflection level and optimal long-run average cost, as shown in Appendix~\ref{app:C.3}.

\section{Concluding Remarks}\label{sec:8}
In this paper, we have developed three data-driven algorithms for the Brownian reflection control problem and establish their corresponding finite-time regret bounds. Based on the regret decomposition, we show how to use the Foster-Lyapunov inequality to bound the transient regret and derive several key ingredients (local regularity property, consistency and MSE bounds for the estimator) to control the learning regret. Furthermore, we leverage the monotonicity of the moving boundary Skorokhod mapping to bound the sample path of the AU and AU-FH algorithms. In numerical experiments, we demonstrate the advantages of the proposed algorithms by fully leveraging the problem structure.

Several directions remain open for future research. First, for reflection control, it would be natural to extend the present framework to a two-sided control framework. We expect not only our algorithm design principle but also the Foster-Lyapunov method employed in our transient regret analysis to remain implementable.
Second, high-dimensional (reflected) Brownian control problems are substantially more challenging: on one hand, characterizing optimal policy is analytically intractable; on the other hand, designing high-dimensional learning algorithms and conducting regret analysis are more difficult.
Third, it would be interesting to incorporate explicit control cost into the model, with the aim of conducting finite-time regret analysis. 
Finally, more flexible adaptive mechanisms may also be considered.
The AU and AU-FH algorithms in our paper use a deterministic doubling schedule, which has two useful analytical consequences: the number of policy updates up to time $T$ is only $O(\log T)$, 
and the learning regret on each interval is bounded by a constant, which are critical to yield the $O(\log T)$ regret bound. 
Beyond such deterministic schedules, one may consider data-dependent updating or stopping rules, where the updating is adaptive to the information generated by the data.

From an application perspective, our study can be viewed as a data-driven optimal reflection control in Brownian production-inventory models with unknown system primitives.
A few papers have studied related learning problems under different information structures for Brownian inventory model. \citet{cadenillas2013optimal} consider a Brownian demand process with Markov-modulated drift and variance parameters, and derive an optimal production policy when the underlying environment is not directly observable. \citet{federico2023two} use Bayesian updating to learn an unknown drift and manage the inventory through two-sided singular control.
In contrast, our work develops a model-based learning framework for Brownian inventory control and provides a finite-time regret analysis. More broadly, our data-driven model-based learning approach offers a promising framework for designing learning algorithms and analyzing regret in unknown environments for other operations research models---including queueing control, production-inventory systems, storage systems, and revenue management operations---that are described by Brownian or diffusion models and admit structured optimal policies (e.g., threshold policy, singular control, impulse control) under full-information settings.

\newpage 
\bibliographystyle{plainnat}
\bibliography{reference}

\newpage
\appendix
\section{Appendix: Proofs in Section~\ref{sec:4}}\label{app:A}
\subsection{Proof in Section~\ref{sec:4.1}}\label{app:A.1}
\begin{proof}[Proof of Lemma~\ref{lemma:rbm-ergodicity}]
    $(i)$: Recall the Skorokhod mapping from $x+\theta t+\sigma B(t)$ to $(Y_r(t),Z_r(t))$ in \eqref{eq:Skorokhod-mapping-Y} and \eqref{eq:Skorokhod-mapping-Z}. For $r\leq x\leq y$, we construct $Z_r^{0,x}$ and $Z_r^{0,y}$ defined in \eqref{eq:SDE-startfrom-s} driven by the same Brownian motion. By the monotonicity of the Skorokhod mapping, we obtain
    \[
    Z_r^{0,x}(t)\leq Z_r^{0,y}(t),\quad t\geq 0,\quad \text{a.s.}
    \]
    which implies $Z_r$ is stochastically ordered.

    $(ii)$: For $t<\tau_r$, by It\^o's formula,
    \[
    \mathrm{d} V_r(Z_r(t))=V_r^{\prime}(Z_r(t))\mathrm{d} Z_r(t)+\frac{1}{2}V_r^{\prime\prime}(Z_r(t))\mathrm{d}\braket{Z_r}_t.
    \]
    Here $\braket{Z_r}$ denotes the predictable quadratic variation of $Z_r$.
    Since $\mathrm{d}\braket{Z_r}_t=\sigma^2 \mathrm{d} t$, we have
    \[
    \mathrm{d}V_r(Z_r(t))=\left(\frac{\theta\underline{\gamma}}{2}+\frac{\sigma^2\underline{\gamma}^2}{8}\right)V_r(Z_r(t))\mathrm{d}t+\frac{\sigma\underline{\gamma}}{2} V_r(Z_r(t))\mathrm{d}B(t)+\frac{\underline{\gamma}}{2}  V_r(Z_r(t))\mathrm{d}Y_r(t).
    \]
    By the definition of $\tau_r$ in \eqref{eq:tau-r}, $Z_r(t)>r$, hence  $\mathrm{d}Y_r(t)=0$ on $\left[0,\tau_r\right)$. Thus, 
    \[
    V_r\left(Z_r(t\wedge\tau_r)\right)=V_r(Z_r(0))+\int_0^{t\wedge \tau_r} \left(\frac{\theta\underline{\gamma}}{2}+\frac{\sigma^2\underline{\gamma}^2}{8}\right)V_r(Z_r(s))\mathrm{d}s+\frac{\sigma\underline{\gamma}}{2}\int_0^{t\wedge \tau_r}  V_r(Z_r(s))\mathrm{d}B(s).
    \]
    Rewriting the above equality, we have
    \[
    V_r\left(Z_r(t\wedge\tau_r)\right)+\left(-\frac{\theta\underline{\gamma}}{2}-\frac{\sigma^2\underline{\gamma}^2}{8}\right)\int_0^{t\wedge\tau_r}V_r(Z_r(s))\mathrm{d}s=V_r(Z_r(0))+\frac{\sigma\underline{\gamma}}{2}\int_0^{t\wedge \tau_r} V_r(Z_r(s))\mathrm{d}B(s).
    \]
    The right-hand side is a local martingale and the left-hand side is nonnegative since $V_r\geq1$ and
    \[
    -\frac{\theta\underline{\gamma}}{2}-\frac{\sigma^2\underline{\gamma}^2}{8}=\frac{\sigma^2\underline{\gamma}(2\gamma-\underline{\gamma})}{8}>0.
    \]
    Therefore, $V_r\left(Z_r(t\wedge\tau_r)\right)+\left(\sigma^2\underline{\gamma}(2\gamma-\underline{\gamma})/8\right)\int_0^{t\wedge\tau_r}V_r(Z_r(s))\mathrm{d}s$
    is a supermartingale with Lyapunov constant $\sigma^2\underline{\gamma}(2\gamma-\underline{\gamma})/8$.

    $(iii)$: Recalling $\pi_r$ in \eqref{eq:stationary-Zr}, we have
    \[
    \int_{[r,\infty)}V_{r}\mathrm{d}\pi_r=\int_r^\infty e^{\underline{\gamma}(y-r)/2}\gamma e^{-\gamma (y-r)}\mathrm{d}y=\frac{\gamma}{\gamma-\underline{\gamma}/2}<\infty. \qedhere
    \]
\end{proof}

\subsection{Proofs in Section~\ref{sec:4.2}}\label{app:A.2}
\begin{proof}[Proof of Lemma~\ref{lemma:exp-moment-bound-Z0}]
    The result in \eqref{eq:ergodicity-convergence} for $r=0$ implies that
    \begin{align}\label{eq:ergodicity-convergence-Z0}
    \left\|\mathbb{P}_t^{(0)}(x,\cdot)-\pi_0\right\|_{V_0}\leq \left[2+e^{\underline{\gamma} x/2}\right]e^{-\sigma^2\underline{\gamma}(2\gamma-\underline{\gamma})t/8},\quad\quad t\geq0
    \end{align}
    since $\int_{[0,\infty)}V_0\mathrm{d}\pi_0=\frac{\gamma}{\gamma-\underline{\gamma}/2}\leq 2$ and $V_0(z)=e^{\underline{\gamma} z/2}$.
    
    Then we have
    \begin{align*}
    \mathbb{E}_x\left[ e^{\underline{\gamma} Z_0(t)/2}\right]&=\int_{[0,\infty)}e^{\underline{\gamma}z/2}\mathbb{P}_t^{(0)}(x,\mathrm{d}z)\\
    &=\int_{[0,\infty)}V_0(z)\pi_0(\mathrm{d}z)+\int_{[0,\infty)}V_0(z)\left(\mathbb{P}_t^{(0)}(x,\mathrm{d}z)-\pi_0(\mathrm{d}z)\right)\\
    &\leq 2+\left\|\mathbb{P}_t^{(0)}(x,\cdot)-\pi_0\right\|_{V_0}\\
    &\leq 4+e^{\underline{\gamma} x/2}
    \end{align*}
    where the first inequality follows from the definition of $V$-norm and the second inequality follows from \eqref{eq:ergodicity-convergence-Z0}.
    The above inequality holds for all $t\geq0$, so we obtain
    \[
    \sup_{t\geq0}\mathbb{E}_x\left[e^{\underline{\gamma} Z_0(t)/2}\right]\leq 4+e^{\underline{\gamma}x/2}. \qedhere
    \]
\end{proof}

\begin{proof}[Proof of Lemma~\ref{lemma:Markov-property-LTO}]
We first note that the pre-specified exploration period $\tau$ is deterministic. By the sample path construction in Section~\ref{sec:3.1}, $\hat{r}_{\tau}$ is computed from the observations before time $\tau$, and $X=Z_0(\tau)$. Therefore both $\hat{r}_{\tau}$ and
$X$ are $\mathcal F_{\tau}$-measurable.

For $s\geq 0$, define the shifted Brownian motion
\[
B^{\tau}(s):=B(\tau+s)-B(\tau).
\]
By the strong Markov property of Brownian motion, $B^{\tau}$ is a standard Brownian motion independent of $\mathcal F_{\tau}$.

Conditional on $\mathcal F_{\tau}$, the pair $\left(\hat{r}_{\tau},X\right)$ is fixed. On the interval $[\tau,T]$, the process satisfies
\[
Z^{\tau,X}_{\hat{r}_{\tau}}(\tau+s)=X+\theta s+\sigma B^{\tau}(s)+Y^{\tau,X}_{\hat{r}_{\tau}}(s)
\qquad 0\leq s<T-\tau.
\]
Hence, conditional on $\mathcal F_{\tau}$,
\[
\left\{Z^{\tau,X}_{\hat{r}_{\tau}}(\tau+s):0\leq s<T-\tau\right\}
\]
has the same law as a reflected Brownian motion with reflection level $\hat{r}_{\tau}$ starting from $X$. Therefore, for every $s\geq 0$,
\[
\mathbb E_x\left[h\left(Z^{\tau,X}_{\hat{r}_{\tau}}(\tau+s)\right)\middle|\mathcal F_{\tau}
\right]=\mathbb{E}_X^{\hat{r}_{\tau}}\left[h\left(Z_{\hat{r}_{\tau}}^{0,X}(s)\right)\right].
\]
Integrating over $s\in[0,T-\tau]$ gives the desired identity.
\end{proof}

\subsection{Proofs in Section~\ref{sec:4.3}}\label{app:A.3}
\begin{proof}[Proof of Lemma~\ref{lemma:moving-boundary-Skorokhod}]
    Since $b_1(t)\leq b_2(t),t\geq0$ we have for each $s\leq t$,
    \[
    \left(b_1(s)-x-\theta s-\sigma B(s)\right)^+\leq\left(b_2(s)-x-\theta s-\sigma B(s)\right)^+.
    \]
    Taking the supremum over $s\in[0,t]$ gives $Y_1(t)\leq Y_2(t)$, hence
    \[
    Z_1(t)=x+\theta t+\sigma B(t)+Y_1(t)\leq x+\theta t+\sigma B(t)+Y_2(t)=Z_2(t).
    \]
    This proves the result.
\end{proof}

\begin{proof}[Proof of Corollary~\ref{Cor:1}]
    Since the optimality equation only has a negative solution, we have $\hat{r}_k\leq0$ for each $k\in\mathbb{N}$.
    Then using Lemma~\ref{lemma:moving-boundary-Skorokhod} with $b_1(t)=\hat{r}(t)\leq0$ and $b_2(t)=0$ for all $t\geq0$ and each $\omega\in\Omega$, we complete the proof. Since $X_k=Z_{\hat{r}}(\tau_k)$, we have $X_k\leq Z_0(\tau_k)$ almost surely.
\end{proof}

\section{Proofs in Section~\ref{sec:5}}\label{app:B}

\subsection{Proofs in Section~\ref{sec:5.1}}\label{app:B.1}

\begin{proof}[Proof of Lemma~\ref{lemma:regularity}]
    $(i)$: Recall the definition of $C$ in \eqref{eq:def-C} and change of variables formula gives
    \[
    C(u,r)=\int_0^{\infty}u h(r+z)e^{-uz}\mathrm{d}z.
    \]
    Fix $(u_0,r_0)\in[\underline{\gamma},\infty)\times\mathbb{R}$, and let $(u_n,r_n)\to(u_0,r_0)$, with $u_n\geq \underline{\gamma}$ and $r_n\in\mathbb{R}$. Then there exists a $N_0$ such that for all $n\geq N_0$
    \[
    \frac{u_0}{2}\leq u_n\leq 2u_0,\quad r_0-1\leq r_n\leq r_0+1 .
    \]
    
    Let $L:=\max\{0,1-r_0\}$. For $0\leq z\leq L$ and $n\geq N_0$, the arguments $r_n+z\in[r_0-1,r_0+1+L]$, hence by the continuity of $h$,
    \[
    H:=\sup_{[r_0-1,r_0+1+L]}h(x)<\infty.
    \]
    Thus we have
    \begin{align}\label{eq:bound-for-yleqL}
        u_nh(r_n+z)e^{-u_nz}\leq2u_0H.
    \end{align}
    
    For $z\geq L$ and $n\geq N_0$, we have
    \[
    r_n+z\geq r_0-1+z\geq0.
    \]
    Assumption~\ref{ass:1} $(iii)$ ensures the existence of $a>0$ such that
    \begin{align*}
        h(r_n+z)\leq ae^{b(r_n+z)}\leq ae^{b(r_0+1)}e^{bz}.
    \end{align*}
    So
    \begin{align}\label{eq:bound-for-ygeqL}
        u_nh(r_n+z)e^{-u_nz}\leq2u_0ae^{b(r_0+1)}e^{-(u_n-b)z}\leq2u_0ae^{b(r_0+1)}e^{-(u_0/2-b)z}.
    \end{align}
    
    Combining \eqref{eq:bound-for-yleqL} and \eqref{eq:bound-for-ygeqL}, we obtain for $n\geq N_0$,
    \begin{align}\label{eq:bound-for-all}
    u_n h(r_n+z)e^{-u_n z}\leq 2u_0H\mathbf 1_{\{0\leq z\leq L\}}+2 u_0a e^{b(r_0+1)} e^{-(u_0/2-b) z}\mathbf 1_{\{z>L\}}.
    \end{align}
    Since $u_0\geq \underline{\gamma}$ and $b<\underline{\gamma}/2$, we have $u_0/2>b$ and thus the right-hand side is integrable on $[0,\infty)$. Moreover, for each fixed $z\geq0$, since $u_n\to u_0$, $r_n\to r_0$, and $h$ is continuous, we have
    \[
    u_n h(r_n+z)e^{-u_n z}\to u_0 h(r_0+z)e^{-u_0z} \quad \text{as} \quad n\to\infty.
    \]
    By the dominated convergence theorem,
    \[
    \lim_{n\to\infty}\int_0^\infty u_nh(r_n+z)e^{-u_nz}\mathrm{d}z=\int_0^\infty u_0h(r_0+z)e^{-u_0z}\mathrm{d}z.
    \]
    That is,
    \[
    C(u_n,r_n)\to C(u_0,r_0) \quad \text{as} \quad n\to\infty.
    \]
    Since $(u_0,r_0)$ is arbitrary, $C$ is jointly continuous on $[\underline{\gamma},\infty)\times\mathbb{R}$. Recall the definition of $F(u,r)$ in \eqref{eq:F(u,r)}, we directly have $F$ is jointly continuous on $[\underline{\gamma},\infty)\times\mathbb{R}$.

    $(ii)$: Differentiating $C(u,r)=u\int_0^\infty h(r+z)e^{-uz}\mathrm{d}z$ with respect to $u$ gives
    \[
    \partial_u C(u,r)=\int_0^\infty h(r+z)e^{-uz}(1-uz)\mathrm{d}z .
    \]
    We justify this expression and its joint continuity by dominated convergence. Fix the same $u_n$, $r_n$, $u_0$, $r_0$, $N_0$, $L$ and $H$ as above in $(i)$, for $0\leq z\leq L$ and $n\geq N_0$,
    \[
    \left|h(r_n+z)e^{-u_nz}(1-u_nz)\right|\leq H(1+2u_0L).
    \]
    For $z>L$ and $n\geq N_0$, we use the exponential bound in Assumption~\ref{ass:1} $(iii)$ to get
    \[
    \begin{aligned}
    \left|h(r_n+z)e^{-u_nz}(1-u_nz)\right|&\leq a e^{b(r_0+1)} e^{bz} e^{-u_nz}(1+u_nz) \\
    &\leq a e^{b(r_0+1)}(1+2u_0z)e^{-(u_0/2-b)z}.
    \end{aligned}
    \]
    The right-hand side of the above inequality is integrable over $(L,\infty)$. Hence by the dominated convergence theorem, we have 
    \[
        \partial_u C(u_n,r_n)\to\partial_u C(u_0,r_0)\quad \text{as} \quad n\to\infty.
    \]
    This proves that $\partial_u C(u,r)$ exists and is jointly continuous on $[\underline{\gamma},\infty)\times\mathbb{R}$. Since $F(u,r)=C(u,r)-h(r)$,
    we also have
    \[
    \partial_u F(u,r)=\partial_u C(u,r),
    \]
    and $\partial_u F$ is jointly continuous on
    $[\underline{\gamma},\infty)\times\mathbb{R}$.

    $(iii)$: Recall that in \eqref{eq:def-C},
    \[
    C(u,r)=u e^{ur}\int_r^\infty h(z)e^{-uz}\mathrm{d}z .
    \]
    Differentiating this expression with respect to $r$ and using the continuity of $h$ yields
    \begin{align*}
    \partial_r C(u,r)&=u^2 e^{ur}\int_r^\infty h(z)e^{-uz}\mathrm{d}z-u h(r) \\
    &=u\left(C(u,r)-h(r)\right).
    \end{align*}
    Hence, 
    \[
    \partial_r C(u,r)=uF(u,r).
    \]
    By the joint continuity of $C$ and the continuity of $h$, it follows that $\partial_r C(u,r)$ is jointly continuous on $[\underline{\gamma},\infty)\times\mathbb{R}$.

    $(iv)$: For $r<0$, since $h$ is continuously differentiable on $(-\infty,0)$, we have
    \[
    \partial_r F(u,r)=\partial_r C(u,r)-h'(r)=u\bigl(C(u,r)-h(r)\bigr)-h'(r).
    \]
    Therefore $\partial_r F$ is jointly continuous on $[\underline{\gamma},\infty)\times(-\infty,0)$. This proves the lemma. 
\end{proof}

\begin{proof}[Proof of Lemma~\ref{lemma:exist-unique-finite-solution}]
    Recall the definition of $C(u,r)$ in \eqref{eq:def-C}. Let $\xi_u\sim\text{Exp}(u)$. We have for $r\geq0$,
    \begin{align*}
        C(u,r)&=u\int_0^{\infty}h(z+r)e^{-u z}\mathrm{d}z=\mathbb{E}\left[h(r+\xi_u)\right]>h(r).
    \end{align*}
    The inequality holds since $h(x)$ is strictly increasing in $[0,\infty)$. Thus $F(u,r)=0$ does not have non-negative solution for $u\geq\underline{\gamma}$.   
    
    By Assumption~\ref{ass:1} $(iii)$, there exists some constant $a>0$ such that $h(x)\leq ae^{bx},x\geq 0$.
    Consequently, for every $u\geq \underline{\gamma}$,
    \begin{align*}
    u\int_0^\infty h(x)e^{-ux}\mathrm{d}x&\leq a u\int_0^\infty e^{-(u-b)x}\mathrm{d}x= \frac{au}{u-b}\leq \frac{a\underline{\gamma}}{\underline{\gamma}-b},
    \end{align*}
    where the last inequality follows from $0<b<\underline{\gamma}/2$.
    Hence
    \[
    M_0:=\sup_{u\geq \underline{\gamma}}u\int_0^\infty h(x)e^{-ux}\mathrm{d}x<\infty.
    \]
    Now fix $r<0$. For
    \[
    C(u,r)=u e^{ur}\int_r^\infty h(z)e^{-uz}\mathrm{d}z,
    \]
    splitting the integral at $0$, we obtain
    \[
    C(u,r)=u e^{ur}\int_r^0 h(z)e^{-uz}\mathrm{d}z+u e^{ur}\int_0^\infty h(z)e^{-uz}\mathrm{d}z.
    \]
    For $z\in[r,0)$, since $h'(x)<0$ on $(-\infty,0)$, we have $h(z)\leq h(r)$. Therefore, the first term
    \[
    u e^{ur}\int_r^0 h(z)e^{-uz}\mathrm{d}z\leq u e^{ur}h(r)\int_r^0 e^{-uz}\mathrm{d}z=h(r)(1-e^{ur}).
    \]
    The second term satisfies
    \[
    u e^{ur}\int_0^\infty h(z)e^{-uz}\,\mathrm{d}z\leq  M_0 e^{ur}.
    \]
    Combining the two estimates gives
    \[
    C(u,r)\leq h(r)(1-e^{ur})+M_0 e^{ur}.
    \]
    Hence,
    \[
    F(u,r)=C(u,r)-h(r)\leq e^{ur}\left(M_0-h(r)\right).
    \]

    By Assumption~\ref{ass:1} $(ii)$,
    \[
    h(r)\to+\infty,\quad \text{as}\quad r\to-\infty.
    \]
    Thus there exists $K_h>0$ such that for $r\leq -K_h$,
    \[
    h(r)>M_0.
    \]
    Consequently, for every $u\geq \underline{\gamma}$ and every $r\leq -K_h$,
    \[
    F(u,r)<0.
    \]
    Therefore no solution of $F(u,r)=0$ can lie in $(-\infty,-K_h]$. Moreover, for each $u\geq\underline{\gamma}$, since $F(u,0)>0$, $F(u,-K_h)<0$ and $F(u,r)$ is jointly continuous,  there exists a solution $\breve{r}$ in $(-K_h,0)$ such that $F(u,\breve{r})=0$.

    It remains to prove uniqueness. For fixed $u\geq\underline{\gamma}$, define
    \[
    G(u,r):=e^{-ur}F(u,r),\quad r<0.
    \]
    Using $\partial_r F(u,r)=uF(u,r)-h'(r)$, we obtain
    \[
    \partial_r G(u,r)=e^{-ur}\left(\partial_rF(u,r)-uF(u,r)\right)=-e^{-ur}h'(r)>0,
    \]
    because $h'(r)<0$ on $(-\infty,0)$. Hence $G(u,\cdot)$ is strictly increasing on $(-\infty,0)$. Since $e^{-ur}>0$, $G(u,\cdot)$ and $F(u,\cdot)$ have the same zeros. Therefore $F(u,r)=0$ has at most one solution on $(-K_h,0)$. So $\breve{r}$ is unique and it equals $r(u)$ by definition.
    The above analysis implies that for fixed $u\geq\underline{\gamma}$,
    \begin{align*}
        F(u,r)>0,\quad r(u)<r<0,
    \end{align*}
    and
    \begin{align*}
        F(u,r)<0,\quad r<r(u).
    \end{align*}
    Thus by $\partial_rC(u,r)=uF(u,r)$, we have for $r<0$
    \[
    C\left(u,r(u)\right)<C(u,r).
    \]
    By the continuity of $C(u,r)$ in $r$, we also have
    \[
    C\left(u,r(u)\right)<C(u,0).
    \]
    For $r>0$,
    \begin{align*}
    C(u,r)&=u\int_0^{\infty}h(r+y)e^{-u y}\mathrm{d}y>u\int_0^{\infty}h(y)e^{-u y}\mathrm{d}y=C(u,0),
    \end{align*}
    Thus
    \[
    C\left(u,r(u)\right)<C(u,r),\quad r\in\mathbb{R}. \qedhere
    \]
\end{proof}

\begin{proof}[Proof of Lemma~\ref{lemma:finite-stationary-cost}]
By Lemma~\ref{lemma:regularity}, $C(u,r)$ is jointly continuous on $[\underline{\gamma},\infty)\times\mathbb{R}$. In particular, for the true
parameter $\gamma$, the map
\[
    r\mapsto C(\gamma,r)
\]
is continuous on the compact interval $[-K_h,0]$. Therefore,
\[
    M:=\max_{r\in[-K_h,0]} C(\gamma,r)<\infty .
\]
Hence, for every $r\in[-K_h,0]$,
\[
    C(\gamma,r)\leq M.
\]
This proves the result.
\end{proof}

\begin{proof}[Proof of Proposition~\ref{prop:local-lipschitz}]
Recall that $r^*=r(\gamma)$ and $F(\gamma,r^*)=0$. By Lemma~\ref{lemma:regularity}, $F$ is continuously differentiable in a neighborhood of $(\gamma,r^*)$. Moreover,
\[
\partial_r F(\gamma,r^*)= \gamma\left(C(\gamma,r^*)-h(r^*)\right)-h'(r^*)= -h'(r^*)>0,
\]
where the second equality follows from the optimality equation
$C(\gamma,r^*)=h(r^*)$. 
By the implicit function theorem (IFT), there exist $\varepsilon_0>0$,
$\delta_0>0$, and a continuously differentiable function
$u\mapsto r(u)$ on
\[
U_0:=(\gamma-\varepsilon_0,\gamma+\varepsilon_0)\cap[\underline\gamma,\infty)
\]
such that
\[
F\left(u,r(u)\right)=0,\quad 
r(u)\in (r^*-\delta_0,r^*+\delta_0).
\]
By Lemma~\ref{lemma:exist-unique-finite-solution}, for each $u\geq \underline\gamma$, the equation $F(u,r)=0$
has a unique negative solution. Hence the local solution given by the IFT coincides with the globally defined solution $r(u)$.

Let $m_0:=-h'(r^*)>0$. Shrinking $\varepsilon_0$ if necessary, we may assume that
\[
\partial_r F(u,r)\ge \frac{m_0}{2},
\qquad
(u,r)\in U_0\times [r^*-\delta_0,r^*+\delta_0].
\]
Define
\[
M_1:=\sup_{(u,r)\in U_0\times [r^*-\delta_0,r^*+\delta_0]}
\left\lvert\partial_u F(u,r)\right\rvert. 
\]
Then $M_1<\infty$ since $\partial_u F$ is jointly continuous. 
Differentiating $F(u,r(u))=0$ gives
\[
r'(u)=-\frac{\partial_u F\left(u,r(u)\right)}{\partial_r F\left(u,r(u)\right)},
\]
and therefore
\[
|r'(u)|\le \frac{2M_1}{m_0},\qquad u\in U_0.
\]
Thus, for any $u_1,u_2\in U_0$, the mean value theorem yields
\[
|r(u_1)-r(u_2)|
\le
\frac{2M_1}{m_0}|u_1-u_2|.
\]
\eqref{eq:Lip-r-u} follows by taking $L_0:=2M_1/m_0$.
\end{proof}

\begin{remark}
    In Proposition~\ref{prop:local-lipschitz}, the IFT is applied in a neighborhood of the true parameter $\gamma$. Strictly speaking, if $\gamma=\underline{\gamma}$, the parameter $\gamma$ lies on the boundary of the domain $[\underline{\gamma},\infty)$, and hence the IFT cannot be applied directly on this half-line. This technical issue can be resolved by slightly enlarging the parameter domain. Indeed, by Assumption~\ref{ass:1} $(iii)$, there exists a constant $\eta_0>0$ such that
    \[
    2b<\underline{\gamma}-\eta_0 .
    \]
    Therefore, the dominated-convergence arguments in the proof of Lemma~\ref{lemma:regularity} remain valid with $[\underline{\gamma}-\eta_0,\infty)$ in place of $[\underline{\gamma},\infty)$. Consequently, the regularity properties of $C$ and $F$ hold on this enlarged domain. In particular, even when $\gamma=\underline{\gamma}$, the function $F$ is continuously differentiable in a full open neighborhood of $(\gamma,r^*)$, after choosing $\varepsilon_0<\eta_0$ if necessary. Thus, the IFT can still be applied in the proof of Proposition~\ref{prop:local-lipschitz}.
\end{remark}

\begin{proof}[Proof of Proposition~\ref{prop:costgap}]
By Lemma~\ref{lemma:exist-unique-finite-solution}, $r^*=r(\gamma)$ is the unique global minimizer of $C(\gamma,\cdot)$. Hence, for every $r\in \mathbb R$,
\[
C(\gamma,r)-C(\gamma,r^*)\geq 0.
\]

It remains to prove the quadratic upper bound. Since
$[r^*-\delta_0,r^*+\delta_0]\subseteq (-K_h,0)$, Lemma~\ref{lemma:regularity} implies that $C(\gamma,\cdot)$ is twice continuously differentiable on this interval, with
\[
\partial_{rr}C(\gamma,r)=\gamma\left(\partial_rC(\gamma,r)-h^{\prime}(r)\right)=\gamma\,\partial_r F(\gamma,r).
\]
Therefore,
\[
\bar L:=\sup_{s\in [r^*-\delta_0,r^*+\delta_0]}|\partial_{rr}C(\gamma,s)|<\infty .
\]
Moreover,
\[
\partial_r C(\gamma,r^*)=\gamma\left(C(\gamma,r^*)-h(r^*)\right)=0.
\]
By Taylor's formula, for each $r\in [r^*-\delta_0,r^*+\delta_0]$, there exists $\tilde r$ between $r$ and $r^*$ such that
\[
C(\gamma,r)-C(\gamma,r^*)=\frac12\,\partial_{rr}C(\gamma,\tilde r)(r-r^*)^2.
\]
Consequently,
\[
C(\gamma,r)-C(\gamma,r^*)\leq\frac{\bar L}{2}(r-r^*)^2.
\]
Taking $L_1:=\bar L/2$ proves \eqref{eq:quadratic-C-r}.
\end{proof}

\subsection{Proofs in Section~\ref{sec:5.2}}\label{app:B.2}
\begin{proof}[Proof of Lemma~\ref{lemma:consistency}]
    By Lemma~\ref{lemma:ergodicity},
    \[
    \lim_{\tau\to\infty}\frac{1}{\tau}\int_0^{\tau} Z_0(t)\mathrm{d}t=\frac{1}{\gamma}\quad \text{a.s.}
    \]
    Since $\gamma\geq\underline{\gamma}>0$ and
    \[
    \lim_{\tau\to\infty}\left(\frac{1}{\tau}\int_0^{\tau} Z_0(t)\mathrm{d}t\right)^{-1}=\gamma\quad \text{a.s.},
    \]
    we then have $\hat{\gamma}\to\gamma$ as $\tau\to\infty$. Replace the reflection level with $\hat{r}_k$ and notice that $\Delta \tau_{k-1}\to\infty$ as $k\to\infty$, we have $\hat{\gamma}_k\to\gamma$ as $k\to\infty$. 

    To show $\tilde{\gamma}_k\to\gamma$ as $k\to\infty$, we first recall that the definition of $\tilde{q}_{k}$ and $\tilde{q}_{k}^{\mathrm{FH}}$:
    \[
    \tilde{q}_{k}:=\frac{1}{\Delta\tau_{k-1}}\int_{\tau_{k-1}}^{\tau_{k}}\left(Z_{\tilde{r}_{k-1}}^{\tau_{k-1},\tilde{X}_{k-1}}(s)-\tilde{r}_{k-1}\right)\mathrm{d}s,\quad\tilde{q}_{k}^{\mathrm{FH}}:=\frac{\sum_{i=0}^{k-1}\Delta\tau_i \tilde{q}_{i+1}}{\sum_{i=0}^{k-1}\Delta\tau_i}.
    \]
    By Lemma~\ref{lemma:ergodicity}, we have $\tilde{q}_{k}\to1/\gamma$ almost surely as $k\to\infty$. Rewrite $\tilde{q}_{k}^{\mathrm{FH}}$, we have
    \[
    \tilde{q}_{k}^{\mathrm{FH}}=\frac{\sum_{i=0}^{k-1}\Delta\tau_i \tilde{q}_{i+1}}{\sum_{i=0}^{k-1}\Delta\tau_i}=\frac{\sum_{i=0}^{k-1}\Delta\tau_i \tilde{q}_{i+1}}{\tau_{k}}=\sum_{i=0}^{k-1}w_{k,i}\tilde{q}_{i+1},
    \]
    where $w_{k,i}:=\frac{\Delta\tau_i}{\tau_{k}}=\frac{2^i}{2^{k}-1}\geq0$ and $\sum_{i=0}^{k-1}w_{k,i}=1$. 
    Thus, the Toeplitz lemma implies that
    \[
    \tilde{q}_{k}^{\mathrm{FH}}\to\frac{1}{\gamma},\quad \text{a.s.}\quad as\quad k\to\infty,
    \]
    which yields that
    \[
    \tilde{\gamma}_{k}=\frac{1}{\tilde{q}_{k}^{\mathrm{FH}}}\vee\underline{\gamma}\to\gamma,\quad \text{a.s.}\quad as\quad k\to\infty. \qedhere
    \]
\end{proof}

\begin{proof}[Proof of Lemma~\ref{lemma:Zr-polynomial-bound}]
    We first consider $r=0$, and claim that
    \begin{align}\label{eq:tilde-Z0}
    \sup_{t\geq 0}\mathbb{E}\left[\left\lvert Z_0^{0,0}(t)\right\rvert^p\right]\leq\frac{p!}{\gamma^p}.
    \end{align}
    Let $Z_0^{0,\pi_0}(t)$ be another copy, driven by the same Brownian motion starting from stationary distribution $Z_0^{0,\pi_0}(0)\sim \mathrm{Exp}(\gamma)$. Then for every $t\geq 0$, $Z_0^{0,\pi_0}(t)\sim \text{Exp}(\gamma)$.
    By the monotonicity of the Skorokhod mapping and $Z_0^{0,\pi_0}(t)\geq0$, we obtain
    \[
    \sup_{t\geq 0}\mathbb{E}\left[\left\lvert Z_0^{0,0}(t)\right\rvert^p\right]\leq\sup_{t\geq 0}\mathbb{E}\left[\left\lvert Z_0^{0,\pi_0}(t)\right\rvert^p\right]=\mathbb{E}\left[\left\lvert Z_0^{0,\pi_0}(0)\right\rvert^p\right]=\frac{p!}{\gamma^p}.
    \]

    Now we turn to $Z_r(t)$. Let $\tilde{Z}(t):=Z_r(t)-r$ with initial $\tilde{Z}(0)=x-r\geq0$. Then $\{\tilde{Z}(t):t\geq 0\}$ is a standard RBM starting from $x-r\geq0$. We compare this process with $\left\{Z_0^{0,0}(t):t\geq0\right\}$ driven by the same Brownian motion.    

    Since the Skorokhod mapping is $1$-Lipschitz continuous, we have 
   \[
   \sup_{0\leq s\leq t}\left\lvert\tilde{Z}(s)-Z_0^{0,0}(s)\right\rvert\leq x-r,
   \]
   which implies 
   \[
   \tilde{Z}(t)\leq Z_0^{0,0}(t)+x-r,\quad \text{a.s.}
   \]
    Since $x^p$ is a convex function on $[0,\infty)$, from Jensen's inequality we have for any $a_1,a_2>0$
    \[
    (a_1+a_2)^p\leq 2^{p-1}\left(a_1^p+a_2^p\right). 
    \]
    Then, we have 
    \begin{align}\label{eq:tilde-Z}
        \mathbb{E}\left[\left\lvert\tilde{Z}(t)\right\rvert^p\right]\leq\mathbb{E}\left[\left\lvert Z_0^{0,0}(t)+x-r\right\rvert^p\right]\leq2^{p-1}\left(\mathbb{E}\left[\left\lvert Z_0^{0,0}(t)\right\rvert^p\right]+(x-r)^p\right).
    \end{align}
    Thus,
    \begin{align*}
        \sup_{t\geq 0}\mathbb{E}_x[\lvert Z_r(t)\rvert^p]\leq 2^{p-1}\left(\lvert r\rvert^p+\sup_{t\geq 0}\mathbb{E}\left[\left\lvert \tilde{Z}(t)\right\rvert^p\right]\right)\leq 2^{2p-2}\left(\lvert r\rvert^p+(x-r)^p+\frac{p!}{\gamma^p}\right).
    \end{align*}
    The last inequality follows from \eqref{eq:tilde-Z0} and \eqref{eq:tilde-Z}.
\end{proof}

\begin{proof}[Proof of Lemma~\ref{lemma:MSE-bound}]
    Notice that 
    \[
    f(z)=\frac{(z-r)^2}{2\theta}
    \]
    is the solution to the Poisson equation
    \[
    \Gamma f(z)=\left(z-r-\frac{1}{\gamma}\right),\quad z>r,
    \]
    where the generator $\Gamma$ is defined in \eqref{eq:generator}. 
    Indeed, we have
    \[
    f^{\prime}(z)=\frac{z-r}{\theta},\quad f^{\prime\prime}(z)=\frac{1}{\theta},
    \]
    which leads to 
    \[
    \Gamma f(z)=\theta \frac{z-r}{\theta}+\frac{\sigma^2}{2}\frac{1}{\theta}=z-r-\frac{1}{\gamma},
    \]
    and $f^{\prime}(r)=0$.

    Applying It\^o's formula to $f$, we obtain
    \[
    \int_0^T \left(Z_r(t)-r-\frac{1}{\gamma}\right)\mathrm{d} t=f(Z_r(T))-f(Z_r(0))-\sigma\int_0^T f^{\prime}(Z_r(t))\mathrm{d} B(t)-\int_0^T f^{\prime}(Z_r(t))\mathrm{d} Y_r(t).
    \]
    Since $f^{\prime}(r)=0$ and $Y_r(t)$ only increases on $Z_r(t)=r$, we have $\int_0^T f^{\prime}(Z_r(t))\mathrm{d} Y_r(t)=0$. Dividing by $T$ and taking absolute value on both sides, we arrive at
    \[
    \left\lvert \frac{1}{T}\int_0^T \left(Z_r(t)-r\right)\mathrm{d}t-\frac{1}{\gamma}\right\rvert=\frac{1}{T}\left\lvert f(Z_r(T))-f(Z_r(0))-\sigma\int_0^T f^{\prime}(Z_r(t))\mathrm{d} B(t)\right\rvert.
    \]
    Squaring both sides and taking the expectation, we obtain
    \begin{align*}
    \mathbb{E}_x\left[\left\lvert \frac{1}{T}\int_0^T \left(Z_r(t)-r\right)\mathrm{d}t-\frac{1}{\gamma}\right\rvert^2\right]&=\frac{1}{T^2}\mathbb{E}_x\left[\left\lvert f(Z_r(T))-f(Z_r(0))-\sigma\int_0^T f^{\prime}(Z_r(t))\mathrm{d}B(t)\right\rvert^2\right]\\
    &\leq\frac{1}{T^2}\mathbb{E}_x\left[\left( \left\lvert f(Z_r(T))-f(Z_r(0))\right\rvert+\left\lvert\sigma\int_0^T f^{\prime}(Z_r(t))\mathrm{d} B(t)\right\rvert\right)^2\right]\,.
    \end{align*}
    Therefore, by $(a_1+a_2)^2\leq 2(a_1^2+a_2^2)$, we have
    \begin{align}\label{eq:estimate-estimator}
    \mathbb{E}_x\left[\left\lvert \frac{1}{T}\int_0^T \left(Z_r(t)-r\right)\mathrm{d}t-\frac{1}{\gamma}\right\rvert^2\right]\leq\frac{2}{T^2}\mathbb{E}_x\left[\left\lvert f(Z_r(T))-f(x)\right\rvert^2\right]+\frac{2\sigma^2}{T^2}\mathbb{E}_x\left[\left\lvert \int_0^T f^{\prime}(Z_r(t))\mathrm{d}B(t)\right\rvert^2\right].
    \end{align}
    We estimate the two terms on the right-hand side of \eqref{eq:estimate-estimator} separately. 
    For the first term, we have
    \begin{align}\label{eq:estimator-first-term}
        \mathbb{E}_x\left[\left\lvert f(Z_r(T))-f(x)\right\rvert^2\right]&\leq2\left(\mathbb{E}_x\left[\left\lvert f(Z_r(T))\right\rvert^2\right]+\left\lvert f(x)\right\rvert^2\right)\nonumber\\
        &\leq 2\left(\mathbb{E}_x\left[\frac{\left(\lvert Z_r(T)\rvert-r\right)^4}{4\theta^2}\right]+\frac{(\lvert x\rvert-r)^4}{4\theta^2}\right)\nonumber\\
        &\leq2\left(\frac{2}{\theta^2}\left(\mathbb{E}_x\left[\lvert Z_r(T)\rvert^4\right]+r^4\right)+\frac{(\lvert x\rvert-r)^4}{4\theta^2}\right)\nonumber\\
        &\leq\frac{4}{\theta^2}\left(2^6\left(r^4+(\lvert x\rvert-r)^4+\frac{4!}{\gamma^4}\right)+r^4\right)+\frac{\left(\lvert x\rvert-r\right)^4}{2\theta^2},
    \end{align}
    where the first and third inequalities follow from Jensen's inequality, the second inequality follows from the definition of $f(z)$ and $(x-r)^4\leq \left(\lvert x\rvert-r\right)^4$, and the last inequality follows from Lemma~\ref{lemma:Zr-polynomial-bound}.

    For the second term, by It\^o isometry, we have
    \begin{align}\label{eq:estimator-second-term}
        \mathbb{E}_x\left[\left\lvert \int_0^T f^{\prime}(Z_r(t))\mathrm{d}B(t)\right\rvert^2\right]&=\int_0^T\mathbb{E}_x\left[\left\lvert  f^{\prime}(Z_r(t))\right\rvert^2\right]\mathrm{d} t\nonumber\\
        &\leq \int_0^T\mathbb{E}_x\left[\frac{\left(\left\lvert  Z_r(t)\right\rvert-r\right)^2}{\theta^2}\right]\mathrm{d} t\nonumber\\
        &\leq \int_0^T\left(\frac{2}{\theta^2}\mathbb{E}_x\left[\left\lvert  Z_r(t)\right\rvert^2\right]+\frac{2r^2}{\theta^2}\right)\mathrm{d} t\nonumber\\
        &\leq T\left(\frac{8}{\theta^2}\left(r^2+(\lvert x\rvert-r)^2+\frac{2}{\gamma^2}\right)+\frac{2r^2}{\theta^2}\right).
    \end{align}
    Thus, since $T\geq1$ and $r\in[-K_h,0]$, substituting \eqref{eq:estimator-first-term} and \eqref{eq:estimator-second-term} into \eqref{eq:estimate-estimator}, we obtain
    \begin{align*}
        \mathbb{E}_x\left[\left\lvert \frac{1}{T}\int_0^T \left(Z_r(t)-r\right)\mathrm{d}t-\frac{1}{\gamma}\right\rvert^2\right]&\leq \frac{2}{T^2}\left(\frac{4}{\theta^2}\left(2^6\left(r^4+(\lvert x\rvert-r)^4+\frac{4!}{\gamma^4}\right)+r^4\right)+\frac{\left(\lvert x\rvert-r\right)^4}{2\theta^2}\right)\\
        &\quad+\frac{2\sigma^2}{T^2}T\left(\frac{8}{\theta^2}\left(r^2+(\lvert x\rvert-r)^2+\frac{2}{\gamma^2}\right)+\frac{2r^2}{\theta^2}\right)\\
        &\leq \frac{2}{T}\left(\frac{4}{\theta^2}\left(2^6\left(K_h^4+(\lvert x\rvert+K_h)^4+\frac{4!}{\gamma^4}\right)+K_h^4\right)+\frac{\left(\lvert x\rvert+K_h\right)^4}{2\theta^2}\right)\\
        &\quad+\frac{2\sigma^2}{T}\left(\frac{8}{\theta^2}\left(K_h^2+(\lvert x\rvert+K_h)^2+\frac{2}{\gamma^2}\right)+\frac{2K_h^2}{\theta^2}\right)\\
        &\leq \frac{C_3(1+\lvert x\rvert^4)}{T}
    \end{align*}
    for some constant $C_3$ (depending only on $\theta$, $\sigma$, and $K_h$).
\end{proof}

\begin{proof}[Proof of Lemma~\ref{lemma:learning-two-results}]
Let $\eta_{\varepsilon_0}:=\frac{\varepsilon_0}{\gamma(\gamma+\varepsilon_0)}$. We first claim that
\[
\left\{ \hat{\gamma}:\left|\frac{1}{\hat{\gamma}}-\frac{1}{\gamma}\right|\leq \eta_{\varepsilon_0}\right\}\subseteq \{\hat{\gamma}\in G_{\varepsilon_0}\}.
\]
Suppose that $\left\lvert\frac{1}{\hat{\gamma}}-\frac{1}{\gamma}\right\rvert\leq \eta_{\varepsilon_0}$. Since $\eta_{\varepsilon_0}<1/\gamma$, we have $\frac{1}{\hat{\gamma}}\geq \frac{1}{\gamma}-\eta_{\varepsilon_0}>0$ and hence
\[
\hat{\gamma}\leq \frac{\gamma}{1-\gamma\eta_{\varepsilon_0}}.
\]
Therefore,
\[
\lvert\hat{\gamma}-\gamma\rvert=\gamma\hat{\gamma}\left\lvert\frac{1}{\hat{\gamma}}-\frac{1}{\gamma}\right\rvert
\leq\gamma\cdot \frac{\gamma}{1-\gamma\eta_{\varepsilon_0}}\cdot \eta_{\varepsilon_0}=\frac{\gamma^2\eta_{\varepsilon_0}}{1-\gamma\eta_{\varepsilon_0}}.
\]
By the definition of $\eta_{\varepsilon_0}$,
\[
\frac{\gamma^2\eta_{\varepsilon_0}}{1-\gamma\eta_{\varepsilon_0}}=\varepsilon_0.
\]
Hence $\lvert\hat{\gamma}-\gamma\rvert\leq \varepsilon_0$, which means $\hat{\gamma}\in G_{\varepsilon_0}$.

Recalling $\hat{\gamma}$ in \eqref{eq:hat-gamma}, we have
\[
\frac{1}{\hat{\gamma}}=\left(\frac{1}{\tau}\int_0^{\tau}Z_0(t)\mathrm{d} t\right)\wedge\frac{1}{\underline{\gamma}}.
\]
Since $\gamma\geq\underline{\gamma}$, we have $1/\gamma\leq 1/\underline{\gamma}$, and 
\begin{align}\label{eq:fraction-hat-gamma}
\left\lvert \frac{1}{\hat{\gamma}}-\frac{1}{\gamma}\right\rvert=\left\lvert \left(\frac{1}{\tau}\int_0^{\tau}Z_0(t)\mathrm{d} t\right)\wedge\frac{1}{\underline{\gamma}}-\frac{1}{\gamma}\right\rvert\leq\left\lvert\frac{1}{\tau}\int_0^{\tau}Z_0(t)\mathrm{d} t-\frac{1}{\gamma}\right\rvert.
\end{align}

Next, by Chebyshev's inequality,
\[
\mathbb{P}_x(\hat{\gamma} \in G_{\varepsilon_0}^c)\leq\mathbb{P}_x\left(\left\lvert\frac{1}{\hat{\gamma}}-\frac1\gamma\right\rvert>\eta_{\varepsilon_0}\right)\leq\frac{1}{\eta_{\varepsilon_0}^2}\mathbb{E}_x\left[\left\lvert\frac{1}{\hat{\gamma}}-\frac1\gamma\right\rvert^2\right]\leq\frac{\gamma^2(\gamma+\varepsilon_0)^2C_3(1+\lvert x\rvert^4)}{\varepsilon_0^2\tau}.
\]
The last inequality follows from \eqref{eq:fraction-hat-gamma} and Lemma~\ref{lemma:MSE-bound}.

Finally, on the good event $\{\hat\gamma \in G_{\varepsilon_0}\}$, we have $\hat{\gamma}\leq \gamma+\varepsilon_0$. Therefore,
\[
\left\lvert\hat{\gamma}-\gamma\right\rvert=\gamma\hat{\gamma}\left\lvert\frac{1}{\hat{\gamma}}-\frac{1}{\gamma}\right\rvert\leq\gamma(\gamma+\varepsilon_0)\left\lvert\frac{1}{\hat{\gamma}}-\frac1\gamma\right\rvert.
\]
We then obtain
\[
\mathbb{E}_x\left[\lvert\hat\gamma-\gamma\rvert^2\mathbf{1}_{\{\hat{\gamma} \in G_{\varepsilon_0}\}}\right]\leq\gamma^2(\gamma+\varepsilon_0)^2
\mathbb{E}_x\left[\left\lvert\frac{1}{\hat{\gamma}}-\frac1\gamma\right\rvert^2\right].
\]
Using the same bound above, we complete the proof.
\end{proof}

\begin{proof}[Proof of Lemma~\ref{lemma:bound-X-4}]
We use the fact that by Corollary~\ref{Cor:1}, 
\[
-K_h\leq X_{k-1}\leq Z_0(\tau_{k-1}),\quad \text{a.s.}
\]
Thus, by Jensen's inequality for function $x^4$, we have
\[
\lvert X_{k-1}\rvert^4\leq 2^3\left(K_h^4+\left(Z_0(\tau_{k-1})\right)^4\right)
\]
and by Lemma~\ref{lemma:Zr-polynomial-bound}, we get for any $k\geq1$,
\begin{align*}
    \mathbb{E}_x\left[\lvert X_{k-1}\rvert^4\right]&\leq2^3\left(K_h^4+\mathbb{E}_x\left[ \lvert Z_0(\tau_{k-1})\rvert^4\right]\right)\nonumber\leq2^3\left(K_h^4+\sup_{t\geq0}\mathbb{E}_x\left[ \lvert Z_0(t)\rvert^4\right]\right)\nonumber\\
    &\leq 2^3\left(K_h^4+2^6\left(x^4+\frac{4!}{\gamma^4}\right)\right).\qedhere
\end{align*}
\end{proof}

\begin{proof}[Proof of Lemma~\ref{lemma:MSE-bound-4-AUFH}]
    Recall the $L^2$-norm for a random variable $\eta$: 
    $\|\eta\|_{L^2}:=\left(\mathbb{E}_x\left[\eta^2\right]\right)^{\frac{1}{2}}.$
    Let $\tilde{e}_i:=\tilde{q}_i-\frac{1}{\gamma}$. Then we have
    \[
    \tilde{q}_k^{\mathrm{FH}}-\frac{1}{\gamma}=\frac{1}{\tau_k}\sum_{i=0}^{k-1}\Delta\tau_i\tilde{q}_{i+1}-\frac{1}{\gamma}=\frac{1}{\tau_k}\sum_{i=0}^{k-1}\Delta\tau_i\tilde{e}_{i+1}.
    \]
    Thus, 
    \begin{align*}
        \left\|\tilde{q}_{k}^{\mathrm{FH}}-\frac{1}{\gamma}\right\|_{L^2}=\left(\mathbb{E}_x\left[\left(\tilde{q}_{k}^{\mathrm{FH}}-\frac{1}{\gamma}\right)^2\right]\right)^{\frac{1}{2}}=\mathbb{E}_x\left[\left(\frac{1}{\tau_k}\sum_{i=0}^{k-1}\Delta\tau_i\tilde{e}_{i+1}\right)^2\right]=\frac{1}{\tau_k}\left\|\sum_{i=0}^{k-1}\Delta\tau_i\tilde{e}_{i+1}\right\|_{L^2}.
    \end{align*}
    By Minkowski inequality, we have
    \[
    \left\|\sum_{i=0}^{k-1}\Delta\tau_i\tilde{e}_{i+1}\right\|_{L^2}\leq\sum_{i=0}^{k-1}\left\|\Delta\tau_i\tilde{e}_{i+1}\right\|_{L^2}.
    \]
    So we obtain
    \begin{align*}
        \left\|\tilde{q}_{k}^{\mathrm{FH}}-\frac{1}{\gamma}\right\|_{L^2}\leq\frac{1}{\tau_k}\sum_{i=0}^{k-1}\Delta\tau_i\left\|\tilde{e}_{i+1}\right\|_{L^2}\leq\frac{1}{\tau_k}\sum_{i=0}^{k-1}\Delta\tau_i\sqrt{\frac{C_4}{\Delta\tau_i}}=\frac{\sqrt{C_4}}{\tau_k}\sum_{i=0}^{k-1}\sqrt{\Delta\tau_i},
    \end{align*}
    where the second inequality follows from \eqref{eq:MSE-q-i}. Since $\sum_{i=0}^{k-1}\sqrt{\Delta\tau_i}=\frac{2^{k/2}-1}{\sqrt{2}-1}$, we have
    \begin{align*}
        \mathbb{E}_x\left[\left(\tilde{q}_{k}^{\mathrm{FH}}-\frac{1}{\gamma}\right)^2\right]&\leq\frac{C_4}{(2^k-1)^2}\left(\frac{2^{k/2}-1}{\sqrt{2}-1}\right)^2\leq \frac{C_4}{(2^k-1)^2}\frac{2^k}{\left(\sqrt{2}-1\right)^2} \\
        & \leq \frac{C_4}{\left(\sqrt{2}-1\right)^2}\frac{2}{2^k-1}=\frac{2C_4}{\left(\sqrt{2}-1\right)^2}\frac{1}{\tau_k}, 
    \end{align*}
    where the last inequality follows from $2^k\leq 2(2^k-1)$ for $k\geq1$. \eqref{eq:MSE-q-FH} holds with $C_5=\frac{2C_4}{\left(\sqrt{2}-1\right)^2}$.
\end{proof}

\section{Supplementary Numerical Experiments}\label{app:C}
In this section, we provide some additional numerical experiment results. We first present a sample-path comparison among the full-information optimal policy, the LTO algorithm, the AU algorithm, and the AU-FH algorithm driven by the same Brownian motion. 
Next, we introduce the REINFORCE algorithm mentioned in Section~\ref{sec:7.1}. Finally, we supplement figures showing the impact of the diffusion scale on AU and AU-FH regrets.

\subsection{Sample-path Illustration}\label{app:C.1}
We simulate three controlled processes for cost function $h(x)=x^2$:  the reflected Brownian motion under the optimal reflecting level $r^*$, the process controlled by the LTO algorithm, the process controlled by the AU algorithm, and the process controlled by the AU-FH algorithm in Figures~\ref{fig:sample-opt}, \ref{fig:sample-lto}, \ref{fig:sample-au} and~\ref{fig:sample-aufh}, respectively. The optimal policy reflects the process at the optimal boundary $r^*$. The LTO algorithm first reflects the process at the exploratory boundary $0$, obtains an estimate $\hat{\gamma}$ of the true parameter $\gamma$, and then switches to the plug-in boundary $r(\hat{\gamma})$. The AU and AU-FH algorithms repeatedly update the estimate and the corresponding reflecting boundary along the doubling schedule. In the LTO, AU, and AU-FH plots, different colors indicate switching reflection regimes along the sample path.

\begin{figure}[htbp]
    \centering
    \begin{subfigure}[b]{0.49\textwidth}
        \centering
        \includegraphics[width=\textwidth]{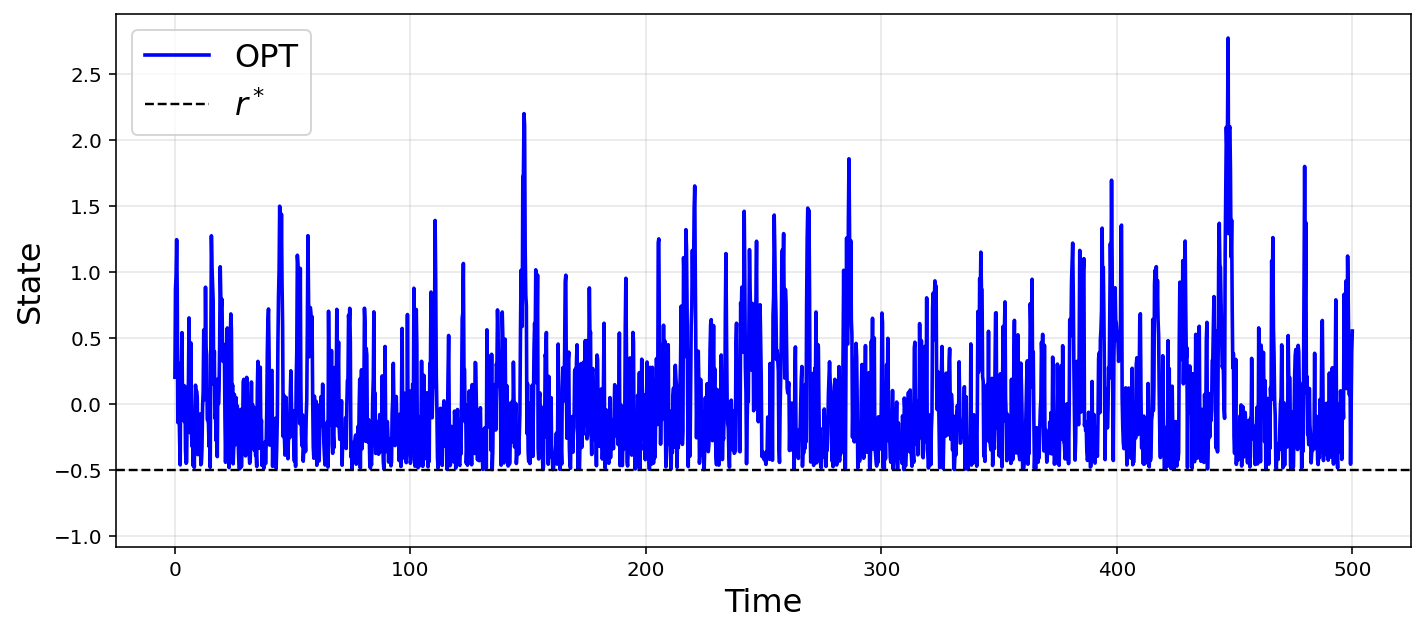}
        \caption{Optimal policy sample path}
        \label{fig:sample-opt}
    \end{subfigure}
    \hfill
    \begin{subfigure}[b]{0.49\textwidth}
        \centering
        \includegraphics[width=\textwidth]{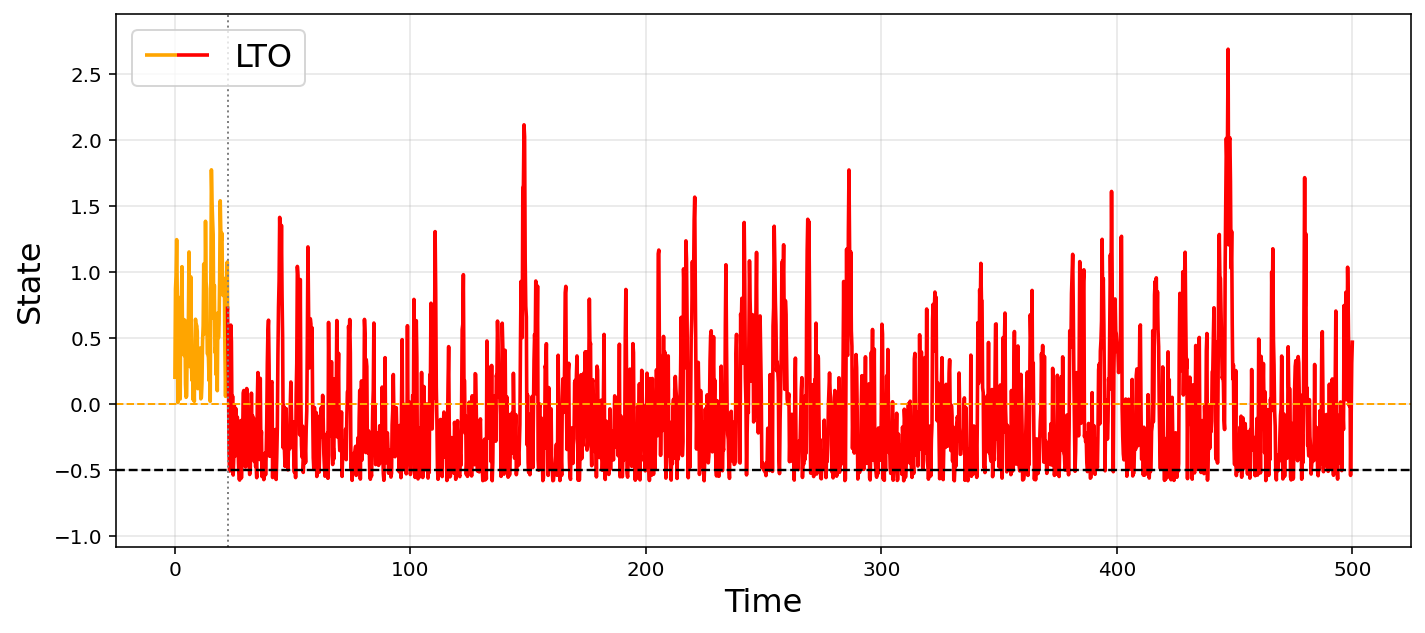}
        \caption{LTO sample path}
        \label{fig:sample-lto}
    \end{subfigure}
    \\
    \begin{subfigure}[b]{0.49\textwidth}
        \centering
        \includegraphics[width=\textwidth]{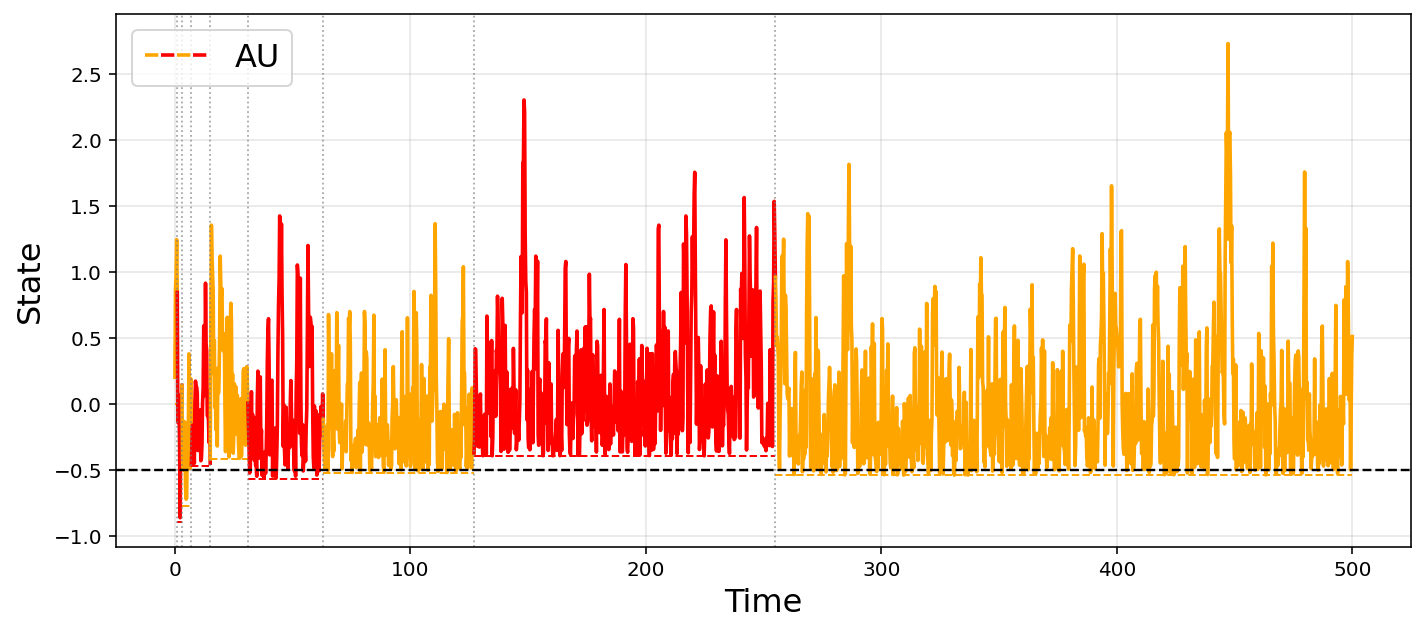}
        \caption{AU sample path}
        \label{fig:sample-au}
    \end{subfigure}
    \begin{subfigure}[b]{0.49\textwidth}
        \centering
        \includegraphics[width=\textwidth]{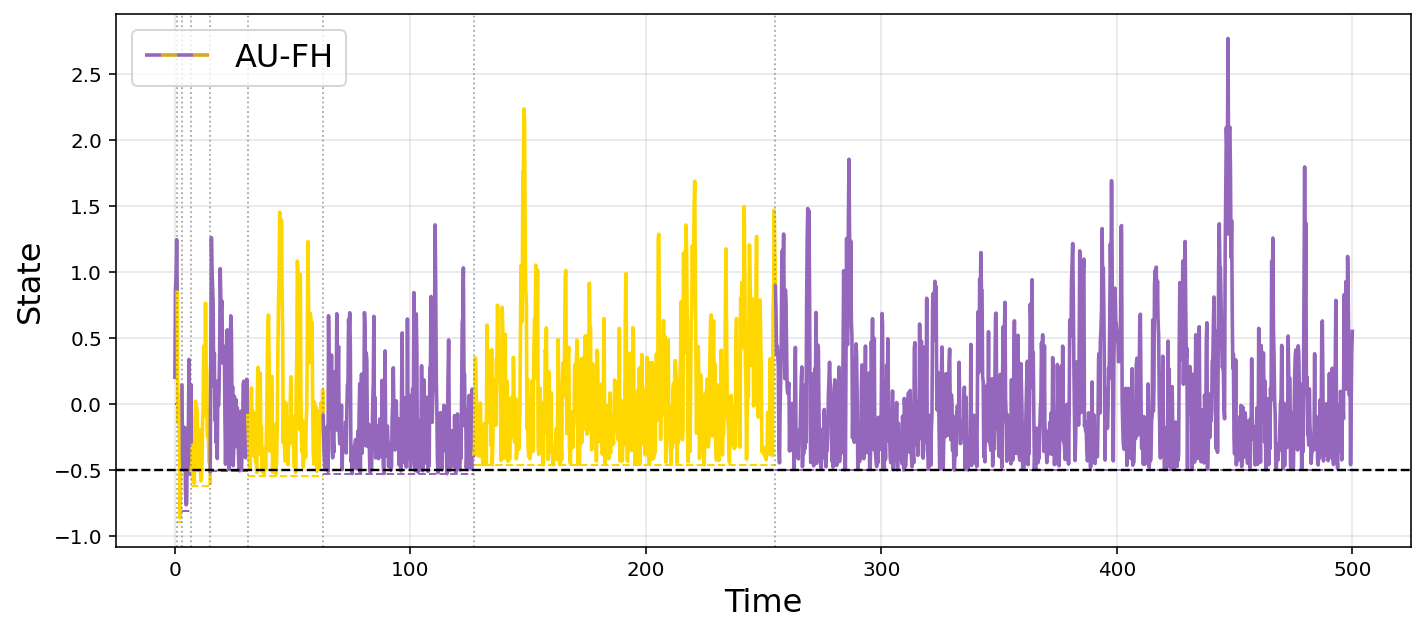}
        \caption{AU-FH sample path}
        \label{fig:sample-aufh}
    \end{subfigure}
    \caption{Sample paths under a common Brownian motion}
\end{figure}

For a sample-path realization $\{Z_{r^*}(t):t\geq0\}$, $\{Z^{\mathrm{LTO}}(t):t\geq0\}$, $\{Z^{\mathrm{AU}}(t):t\geq0\}$, and $\{Z^{\mathrm{AU}\text{-}\mathrm{FH}}(t):t\geq0\}$ driven by the same Brownian motion, the sample-path regret of the LTO, AU and AU-FH algorithms up to time $T$ are  $\int_0^T\left(Z^{\chi}(s)-Z_{r^*}(s)\right)\mathrm{d}s$, with $\chi$ = LTO, AU, AU-FH, respectively.
Figures~\ref{fig:regret-lto}, \ref{fig:regret-au} and~\ref{fig:regret-aufh} show the sample-path regrets of the LTO algorithm, the AU algorithm, and the AU-FH algorithm.
Due to randomness, on some sample paths in the early period, the cost generated by both algorithms may be lower than that of the full-information optimal policy.

\begin{figure}[htbp]
    \centering
    \begin{subfigure}[b]{0.49\textwidth}
        \centering
        \includegraphics[width=\textwidth]{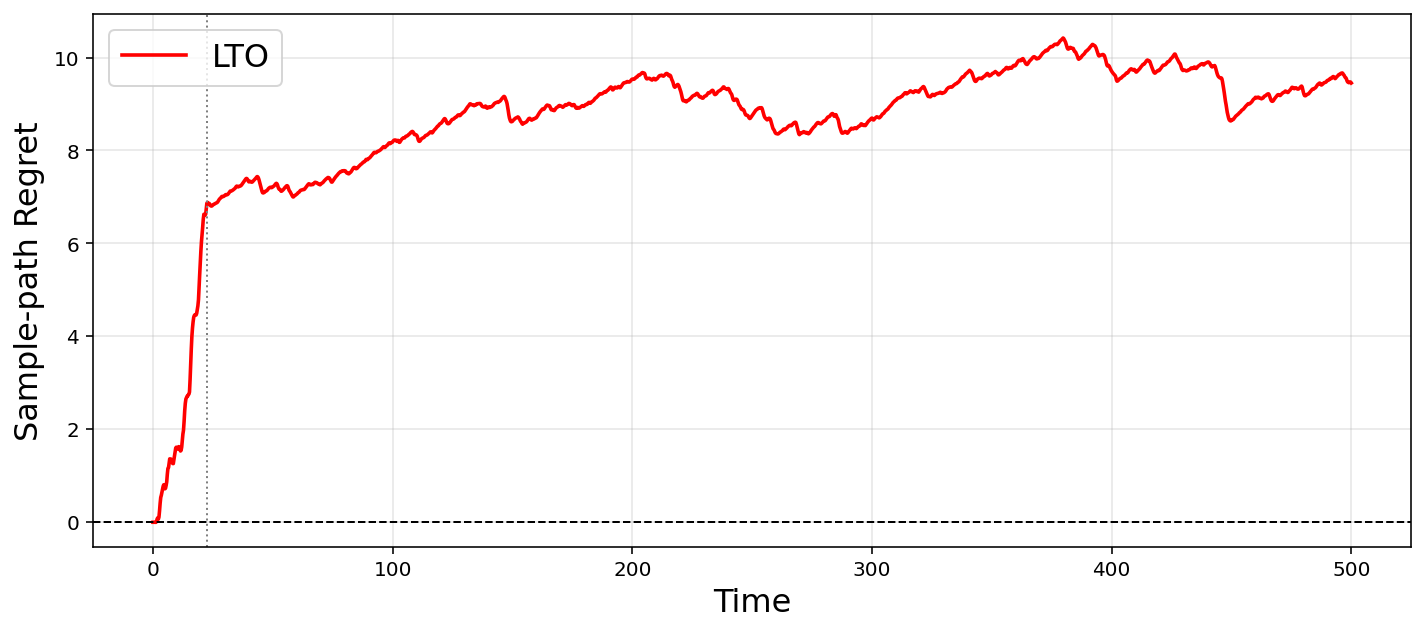}
        \caption{LTO sample-path regret}
        \label{fig:regret-lto}
    \end{subfigure}
    \hfill
    \begin{subfigure}[b]{0.49\textwidth}
        \centering
        \includegraphics[width=\textwidth]{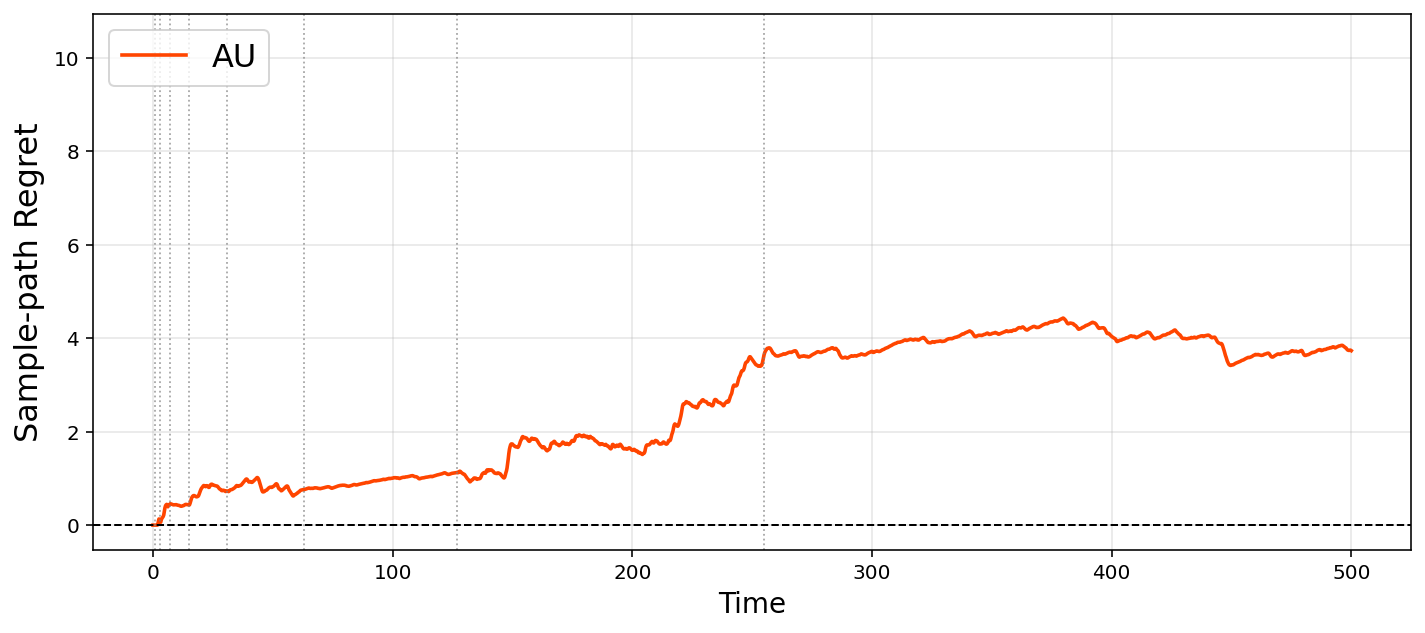}
        \caption{AU sample-path regret}
        \label{fig:regret-au}
    \end{subfigure}
    \\
    \begin{subfigure}[b]{0.49\textwidth}
        \centering
        \includegraphics[width=\textwidth]{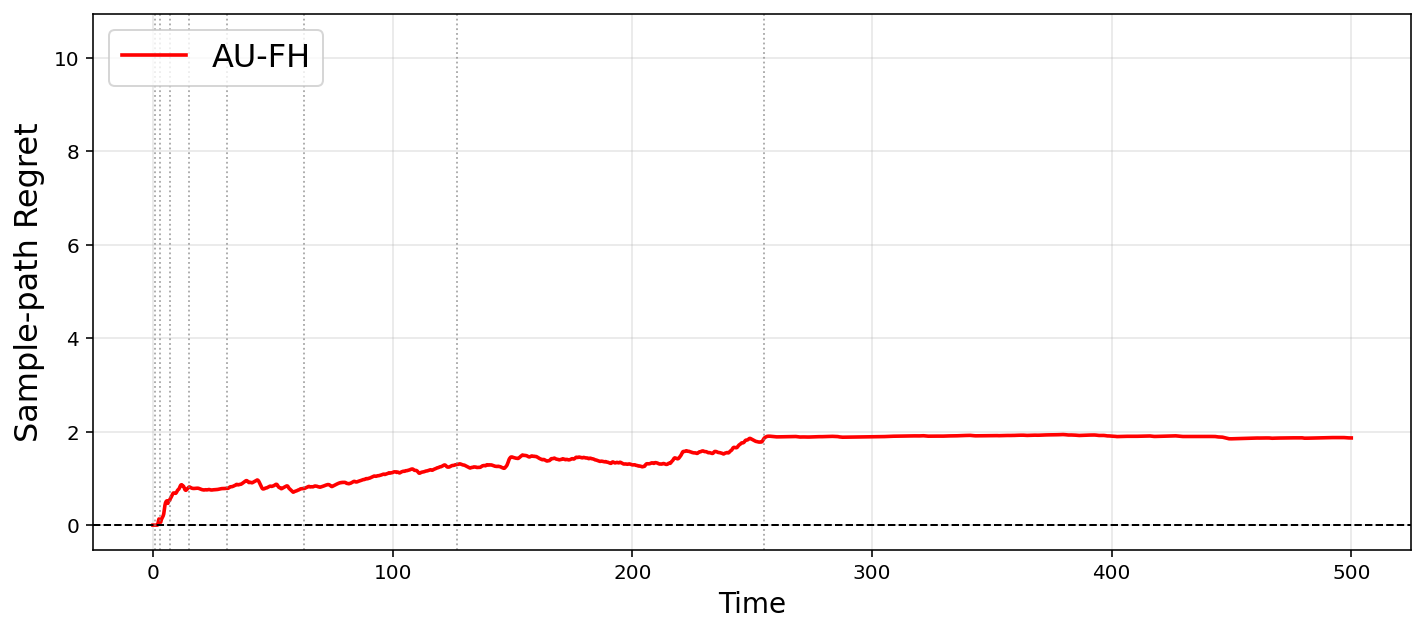}
        \caption{AU-FH sample-path regret}
        \label{fig:regret-aufh}
    \end{subfigure}
    \caption{Sample-path regret}
\end{figure}

\subsection{REINFORCE Algorithm}\label{app:C.2}
We use REINFORCE, a classical model-free policy-gradient RL algorithm, as a benchmark for our reflection control learning problem. We present an episodic implementation in Algorithm~\ref{alg:5}. The reflection policy is parameterized by  $\phi_m$. In each episode, a reflection level is sampled from the current policy and implemented throughout the episode. The trajectory data collected during the episode are then used to evaluate the episode cost and update the policy parameter via the policy-gradient step in \eqref{eq:policy-gradient}. The baseline $b_m$ is included to reduce the variance of the gradient estimator. In the implementation, we choose $H=5$ and $\Delta t=0.002$, so each episode contains $2,500$ simulated transitions.

\begin{algorithm}[htbp]
\caption{Episodic REINFORCE algorithm}
\label{alg:5}
\begin{algorithmic}[1]
    \STATE Choose the episode length $H$, time step $\Delta t$,
    and set $N_{\mathrm{ep}}:=H/\Delta t$. 
    Choose a feasible interval $[r_{\min},0]$ for the reflection level, where $r_{\min}<0$. Initialize the policy parameter $\phi_0$, policy standard deviation $\varsigma>0$, baseline $b_0=0$, and state $Z_0=x$.
    \FOR{$m=0,1,\ldots,M-1$}
        \STATE Sample $U_m\sim\mathcal{N}(\phi_m,\varsigma^2)$.
        \STATE Transform $U_m$ into a feasible reflection level: $R_m:=r_{\min}+(-r_{\min})\frac{1}{1+\exp(-U_m)}$.
        \STATE Set the initial state of episode $m$ as $Z_{m,0}:=Z_m\vee R_m$.
        \STATE Initialize the learning rate $\alpha_m$, $\beta_m\in(0,1]$ and the cumulative episode cost $G_m=0$.
        \FOR{$n=0,1,\ldots,N_{\mathrm{ep}}-1$}
            \STATE Generate $Z_{m,n+1}$ using policy $R_m$.
            \STATE Accumulate the holding cost: $G_m\leftarrow G_m+h(Z_{m,n})\Delta t$.
        \ENDFOR
        \STATE Update the policy parameter:
        \begin{align}\label{eq:policy-gradient}
        \phi_{m+1}:=\phi_m-\alpha_m(G_m-b_m)\nabla_{\phi_m}\log p_{\phi_m}(U_m),
        \end{align}
        where $p_{\phi_m}(\cdot)$ is the probability density function of $\mathcal{N}(\phi_m,\varsigma^2)$.
        \STATE Update the baseline: $b_{m+1}:=(1-\beta_m)b_m+\beta_m G_m$.
        \STATE Set the state for the next episode: $Z_{m+1}:=Z_{m,N_{\mathrm{ep}}}$.
    \ENDFOR
\end{algorithmic}
\end{algorithm}

\subsection{Sensitivity Analysis of Volatility Parameter}\label{app:C.3}
For fixed $\theta=-1$, Figures~\ref{fig:sigma-level} and~\ref{fig:sigma-cost} show the optimal reflection level $r^*$ and optimal long-run average cost $C(\gamma,r^*)$ as a function of $\sigma$, respectively.
The figures highlight that the volatility parameter has a substantial effect on the full-information benchmark. Thus, it is important to learn under the unknown $\sigma$.

\begin{figure}[htbp]
    \centering
    \begin{subfigure}[b]{0.48\textwidth}
        \centering
        \includegraphics[width=\textwidth]{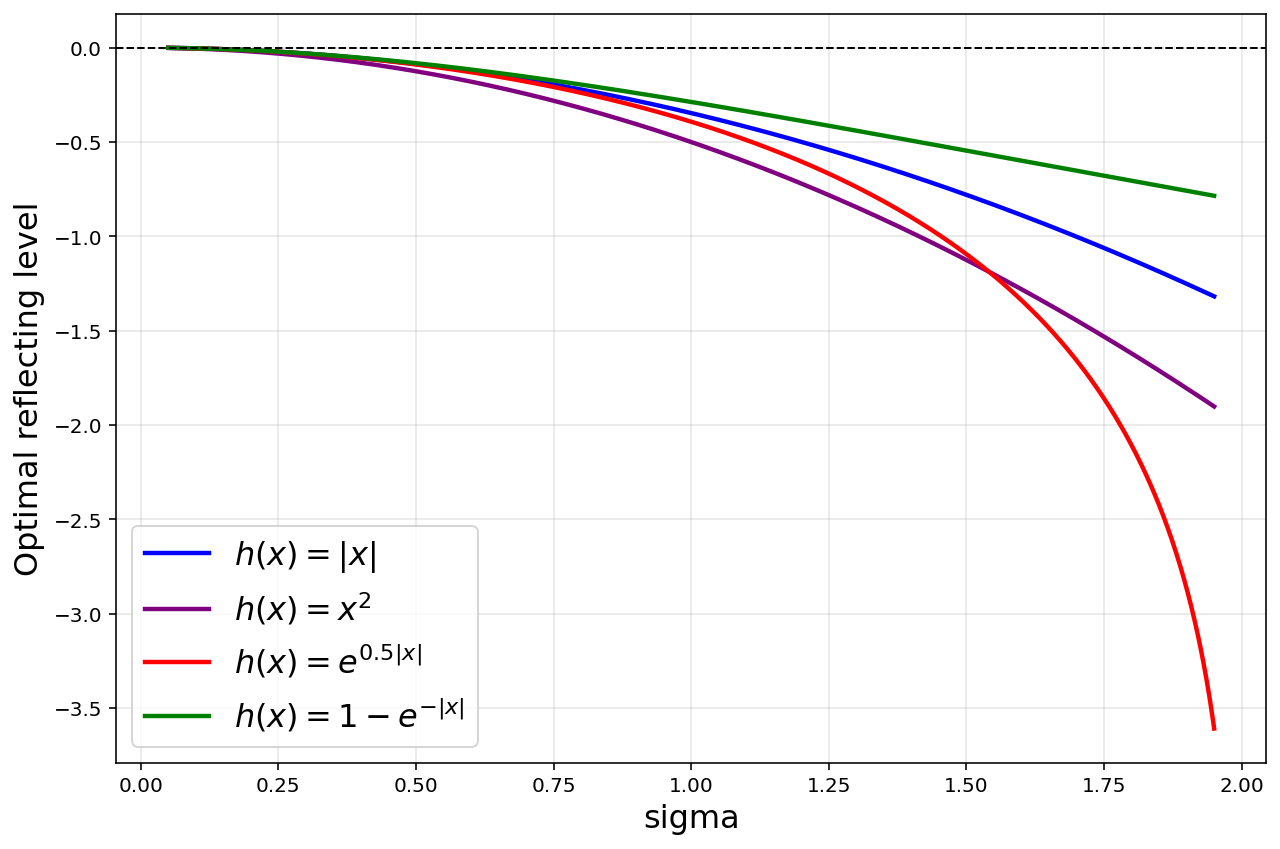}
        \caption{Effect on optimal reflection level}
        \label{fig:sigma-level}
    \end{subfigure}
    \hfill
    \begin{subfigure}[b]{0.48\textwidth}
        \centering
        \includegraphics[width=\textwidth]{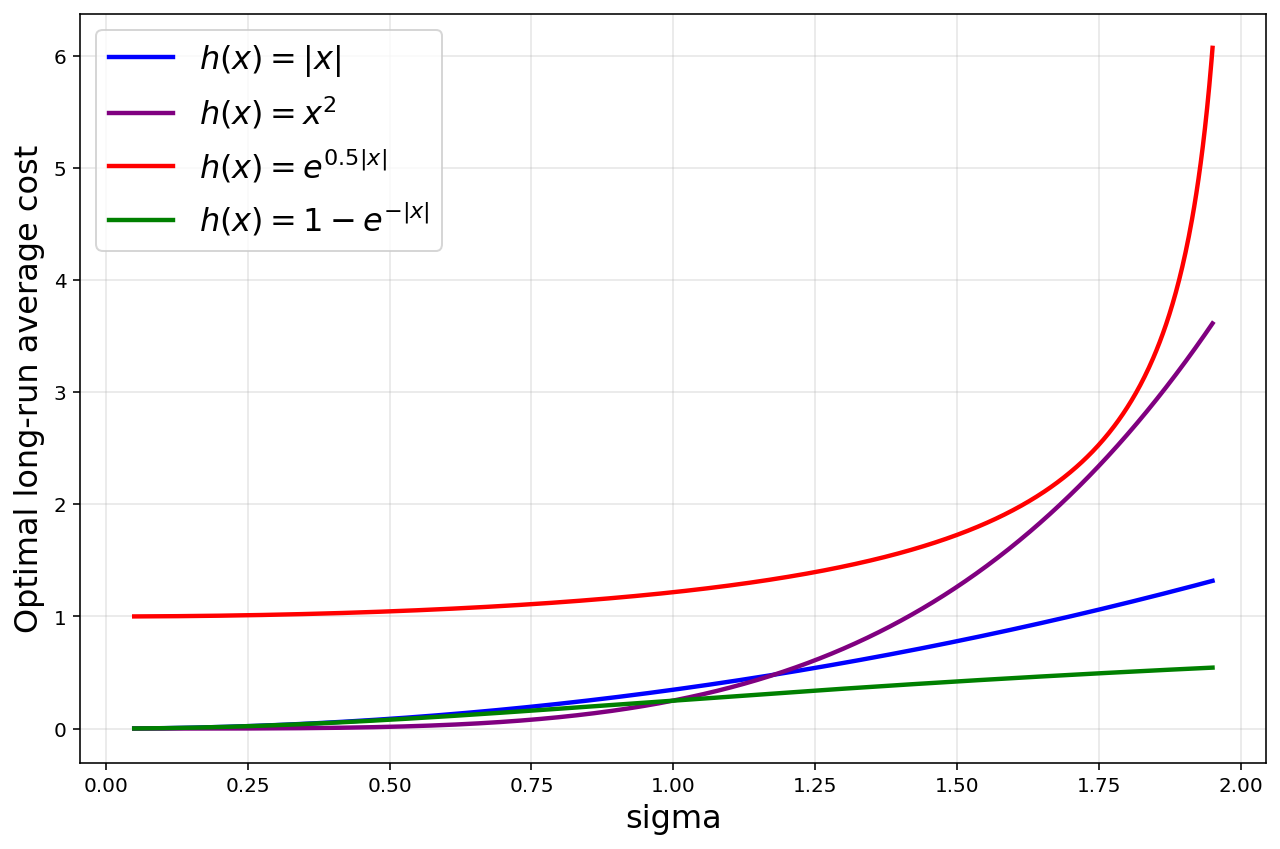}
        \caption{Effect on optimal cost}
        \label{fig:sigma-cost}
    \end{subfigure}
    \caption{}
\end{figure}

\subsection{Supplementary Figures}\label{app:C.4}
The effect of the diffusion scale $\sigma$ on AU regret under other cost functions are in Figures~\ref{fig:change-sigma-qua}, \ref{fig:change-sigma-exp} and~\ref{fig:change-sigma-bound}.
\begin{figure}[htbp]
    \centering
    \begin{subfigure}[b]{0.45\textwidth}
        \centering
        \includegraphics[width=\textwidth]{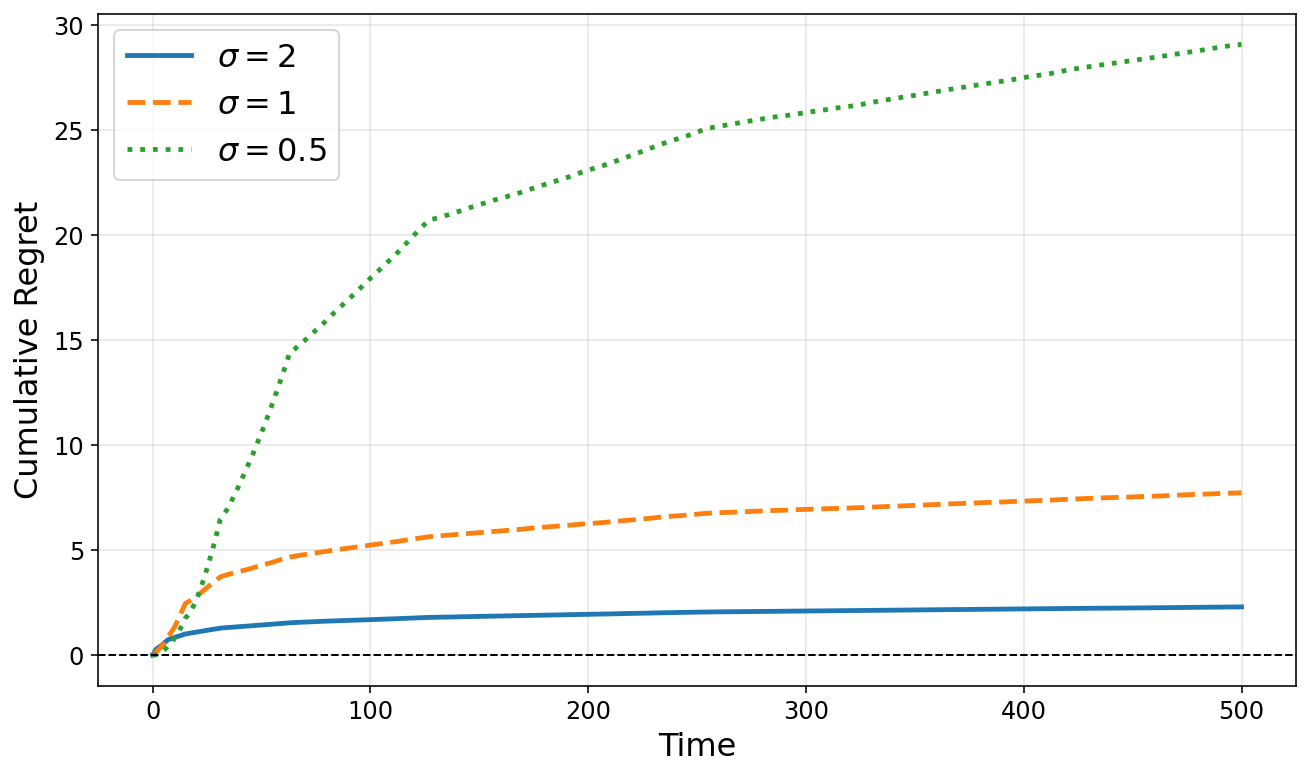}
        \caption{$h(x)=x^2$}
        \label{fig:change-sigma-qua}
    \end{subfigure} 
    \hfill
    \begin{subfigure}[b]{0.45\textwidth}
        \centering
        \includegraphics[width=\textwidth]{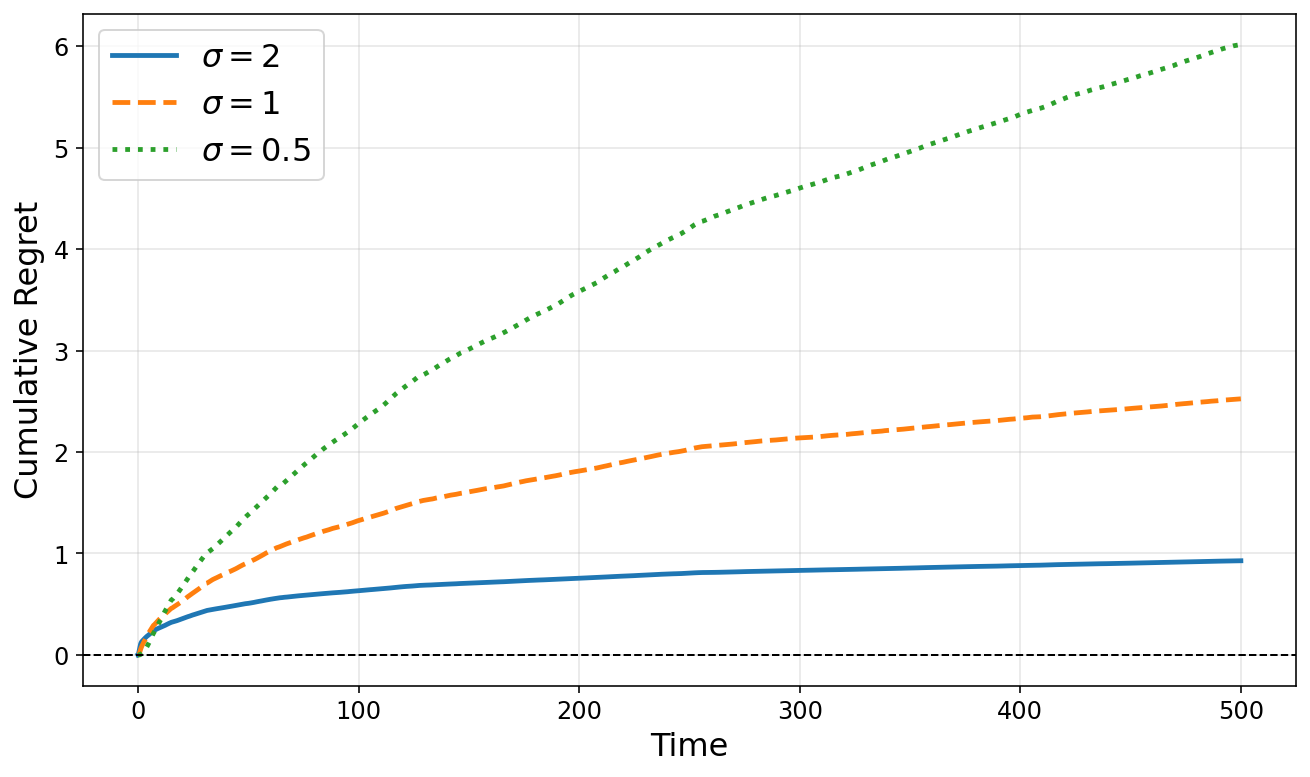}
        \caption{$h(x)=e^{0.5\lvert x\rvert}$}
        \label{fig:change-sigma-exp}
    \end{subfigure}
    \\
    \begin{subfigure}[b]{0.45\textwidth}
        \centering
        \includegraphics[width=\textwidth]{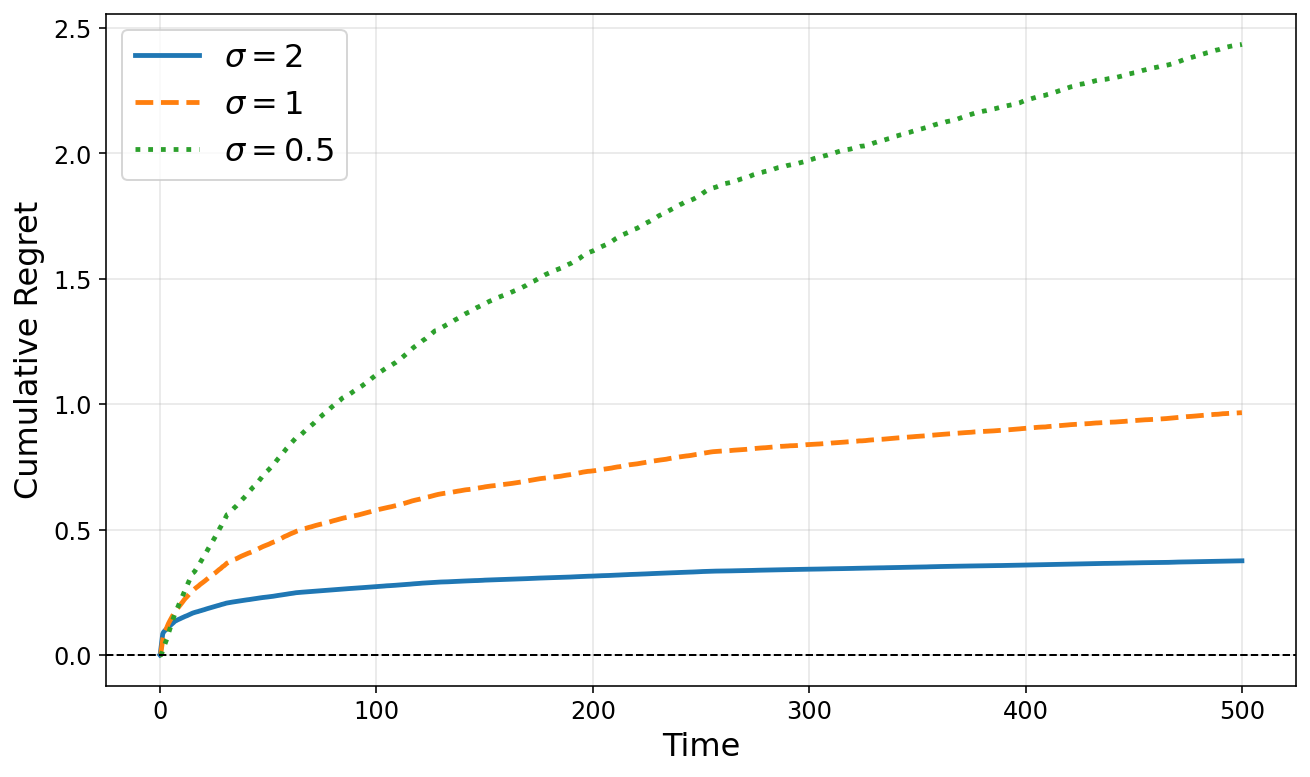}
        \caption{$h(x)=1-e^{-\lvert x\rvert}$}
        \label{fig:change-sigma-bound}
    \end{subfigure}
    \caption{Effect of diffusion scale on AU regret under various cost functions}
\end{figure}

For the AU-FH algorithm, there is also a similar effect of diffusion scale on regret, see Figure~\ref{fig:change-sigma-abs-aufh} for $h(x)=|x|$.
\begin{figure}[htbp]
    \centering
    \includegraphics[scale=0.45]{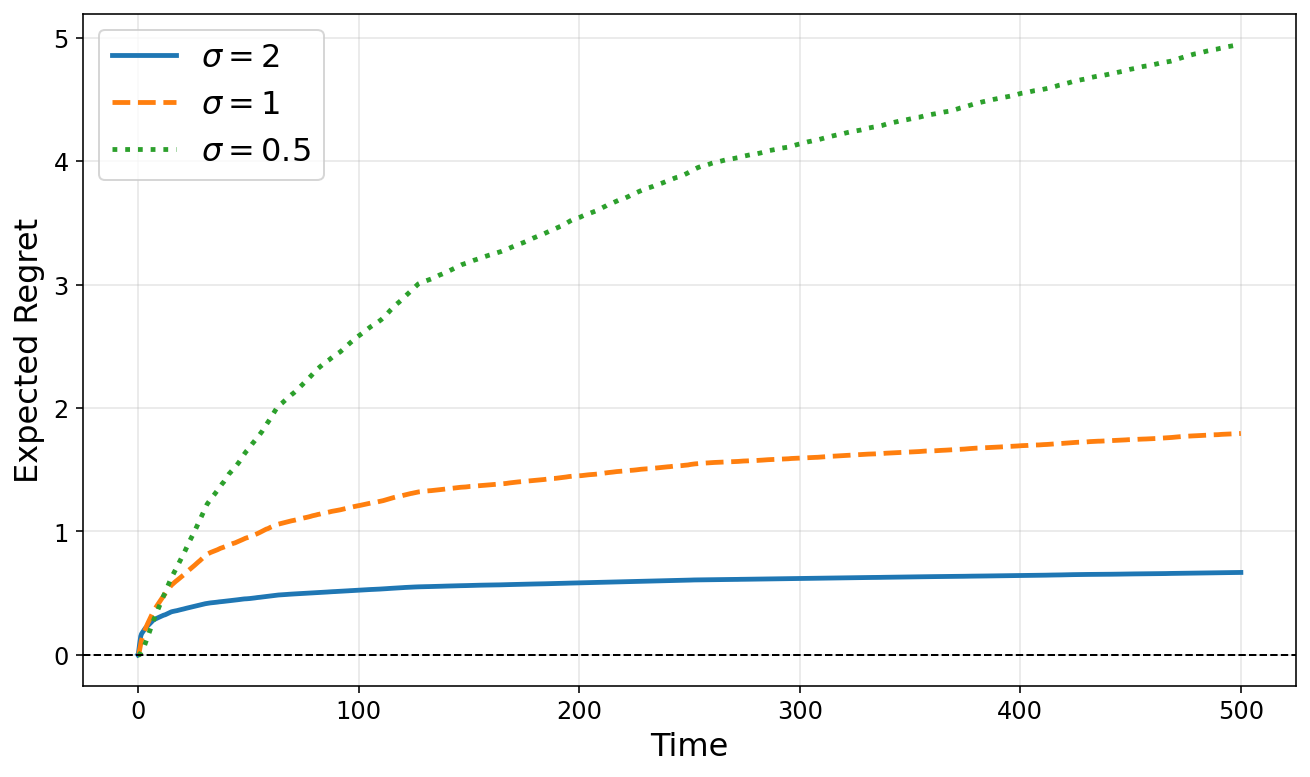}
    \caption{Effect of diffusion scale on AU-FH regret, $h(x)=|x|$}
    \label{fig:change-sigma-abs-aufh}
\end{figure}

\end{document}